\tikzset{
	symbol/.style={
		draw=none,
		every to/.append style={
			edge node={node [sloped, allow upside down, auto=false]{$#1$}}}
	}
}
\definecolor{seagreen}{RGB}{46,139,87}
\definecolor{maroon}{RGB}{128,0,0}
\definecolor{darkviolet}{RGB}{148,0,211}
\definecolor{twelve}{RGB}{100,100,170}
\definecolor{thirteen}{RGB}{100,150,50}
\definecolor{fourteen}{RGB}{200,0,0}
\definecolor{fifteen}{RGB}{0,200,0}
\definecolor{sixteen}{RGB}{0,0,200}
\definecolor{seventeen}{RGB}{200,0,200}
\definecolor{eighteen}{RGB}{0,200,200}
\newcommand{\mmod}{\! \sslash \!}
\newcommand{\mr}[1]{\mathrm{#1}}
\newcommand{\mbf}[1]{\mathbf{#1}}
\newcommand{\br}[1]{\overline{#1}}
\newcommand{\Z}{\mathbf{Z}}
\newcommand{\Q}{\mathbf{Q}}
\newcommand{\W}{\mathbf{W}}
\newcommand{\F}{\mathbf{F}}
\newcommand{\G}{\mathbf{G}}
\def \HF2{\mr{H}\F_2}
\DeclareMathOperator{\Ext}{Ext}
\DeclareMathOperator{\Map}{Map}
\DeclareMathOperator*{\colim}{colim}
\DeclareMathOperator{\Pic}{Pic}
\def \AA0{\br{A \mmod A(0)}_*}
\def \AA2{A\mmod A(2)_*}
\def \AE2{(A\mmod E(2))_*}
\renewcommand{\AE}[1]{(A\mmod E(#1))_*}
\def \E2E1{(E(2)\mmod E(1))_*}
\newcommand{\Sp}{\mathsf{Sp}}
\newcommand{\Gal}{\mathrm{Gal}}
\newcommand{\simto}{\overset{\sim}{\longrightarrow}}
\newcommand{\Zp}{\Z_p}
\newcommand{\Zpx}{\Zp^\x}
\newcommand{\Qp}{\Q_p}
\newcommand{\x}{\times}
\newcommand{\llb}{\llbracket}
\newcommand{\rrb}{\rrbracket}
\newcommand{\Sbf}{\mathbf{S}}
\newcommand{\ldetr}{\langle\det\rangle}
\newcommand{\cd}{\mathrm{cd}}
\newcommand{\mfrak}{\mathfrak{m}}
\newcommand{\hemail}[1]{\email{\href{mailto:#1}{#1}}}
\newcommand{\ev}{\mathrm{ev}}
\newcommand{\coker}{\mathrm{coker}}
\newtheorem{thm}[equation]{Theorem}
\newtheorem{cor}[equation]{Corollary}
\newtheorem{lem}[equation]{Lemma}
\newtheorem{prop}[equation]{Proposition}
\newtheorem{rem}[equation]{Remark}
\newtheorem*{thm*}{Theorem}
\newtheorem*{cor*}{Corollary}
\newtheorem*{lem*}{Lemma}
\newtheorem*{prop*}{Proposition}
\newtheorem*{not*}{Notation}
\theoremstyle{definition}
\newtheorem{defn}[equation]{Definition}
\newtheorem{rmk}[equation]{Remark}
\newtheorem{question}[equation]{Question}
\newtheorem{conjecture}[equation]{Conjecture}
\newtheorem*{conjecture*}{Conjecture}
\newtheorem{construction}[equation]{Construction}
\newtheorem*{defn*}{Definition}
\newtheorem*{ex*}{Example}
\newtheorem*{exs*}{Examples}
\newtheorem*{rmk*}{Remark}
\newtheorem*{claim*}{Claim}
\numberwithin{equation}{section}
\numberwithin{figure}{section}
\setlist{leftmargin=*}
\title[Exotic Picard groups and chromatic vanishing via the Gross-Hopkins duality]{Exotic Picard groups and chromatic vanishing\protect\\ via the Gross-Hopkins duality}
\author{Dominic Leon Culver}\address{~}
\author{Ningchuan Zhang}\address{Indiana University Bloomington, Bloomington, IN 47405, USA}\hemail{nz7@iu.edu}
\begin{document}
	\begin{abstract}
		In this paper, we study the exotic $K(h)$-local Picard groups $\kappa_h$ when $2p-1=h^2$ and the homological Chromatic Vanishing Conjecture when $p-1$ does not divide $h$. The main idea is to use the Gross-Hopkins duality to relate both questions to certain Greek letter element computations in chromatic homotopy theory. Classical results of Miller-Ravenel-Wilson then imply that an exotic element at height $3$ and prime $5$ is not detected by the type-$2$ complex $V(1)$.  For the homological Vanishing Conjecture, we prove it holds modulo the invariant prime ideal $I_{h-1}$. We further show that this special case of the Vanishing Conjecture implies the exotic Picard group $\kappa_h$ is zero at height $3$ and prime $5$. Both results can be thought of as a first step towards proving the vanishing of $\kappa_3$ at prime $5$.
		
		\smallskip
		\noindent \textbf{Keywords.} exotic Picard groups, Chromatic Vanishing Conjecture, Gross-Hopkins duality, Greek letter elements
	\end{abstract}
	\maketitle
	\setcounter{section}{-1}
	\section{Introduction}
	\subsection{Statement of main results}
	The study of Picard groups in chromatic homotopy theory was initiated by Hopkins in \cite{HMS_picard,Strickland_interpolation}. By analyzing the homotopy fixed point spectral sequence for the $K(h)$-local sphere, Hopkins-Mahowald-Sadofsky proved the following:
	\begin{thm*}[{\cite[Proposition 7.5]{HMS_picard}}]
		The exotic $K(h)$-local Picard group $\kappa_h$ (see \Cref{defn:exotic}) is zero when $p-1$ does not divide $h$ and $2p-1>h^2$.
	\end{thm*}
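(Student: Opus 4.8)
The plan is to show that every $X$ with $[X]\in\kappa_h$ is $K(h)$-locally equivalent to $L_{K(h)}S^0$. Since $E_h$ is even periodic, the hypothesis that $(E_h)_*X$ is the trivial Morava module amounts to a choice of $\mathbb{G}_h$-invariant generator $\iota_X\in(E_h)_0X$, i.e.\ a class $\iota_X\in E_2^{0,0}=H^0_c(\mathbb{G}_h;(E_h)_0X)$ in the $K(h)$-local descent spectral sequence $E_2^{s,t}=H^s_c(\mathbb{G}_h;(E_h)_tX)\Rightarrow\pi_{t-s}X$. If $\iota_X$ is a permanent cycle, any lift $\widetilde\iota_X\in\pi_0X$ defines a map $L_{K(h)}S^0\to X$ whose effect on $(E_h)_0$ sends $1$ to $\iota_X$; being $(E_h)_0$-linear between free rank-one modules and hitting a generator, it is an $(E_h)_*$-isomorphism, hence a $K(h)$-equivalence, so $[X]=0$. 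Thus it suffices to show $d_r(\iota_X)=0$ for all $r\ge2$. Each $d_r(\iota_X)$ lies in a subquotient of $E_2^{s,s-1}=H^s_c(\mathbb{G}_h;(E_h)_{s-1}X)\cong H^s_c(\mathbb{G}_h;(E_h)_{s-1})$ with $s=r$. Even periodicity gives $(E_h)_{s-1}=0$ when $s$ is even, and the inequality $2p-1>h^2$ forces $h<p$, hence $p\nmid h$, hence the Morava stabilizer group $\mathbb{S}_h$ is $p$-torsion-free and $\cd_p\mathbb{G}_h=h^2$, so $E_2^{s,s-1}=0$ for $s>h^2$. The only potentially nonzero obstruction groups are therefore $H^s_c(\mathbb{G}_h;\omega^{\otimes(s-1)/2})$ for odd $s$ with $3\le s\le h^2$, a finite list.

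To eliminate these I would compare the above spectral sequence with the descent (Picard) spectral sequence $H^s_c(\mathbb{G}_h;\pi_t\mathfrak{pic}(E_h))\Rightarrow\pi_{t-s}\mathfrak{pic}(L_{K(h)}S^0)$: for $t\ge2$ it agrees termwise with the former after the shift $\pi_t\mathfrak{pic}(E_h)=\pi_{t-1}E_h$, and by the Mathew--Stojanoska comparison its differentials agree with those of the descent spectral sequence for $L_{K(h)}S^0$ through length $2p-2$, the first possible discrepancy being a $d_{2p-1}$ controlled by the first stable stem $\alpha_1\in\pi_{2p-3}S^0$. Since $2p-1>h^2=\cd_p\mathbb{G}_h$, all $d_r$ with $r\ge2p-1$ vanish in both spectral sequences for dimension reasons, so in the range relevant to $\kappa_h$ they coincide with the ordinary descent spectral sequence for $L_{K(h)}S^0$. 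Running that spectral sequence, one uses that $\pi_0L_{K(h)}S^0=\Z_p$ is concentrated in filtration $0$ in this range (again using $p-1\nmid h$) to force every class of $E_2^{u,u}$ with $u\ge1$ to be killed; tracking where these differentials land, together with Poincar\'e duality for $\mathbb{G}_h$ (cohomological dimension $h^2$, with dualizing module a twist of $\omega$), shows $E_\infty^{s,s-1}=0$ for all odd $s$ with $3\le s\le h^2$. Hence every $d_r(\iota_X)$ vanishes, $\iota_X$ is a permanent cycle, and $X\simeq L_{K(h)}S^0$.

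The main obstacle is this last step. None of the groups $H^s_c(\mathbb{G}_h;\omega^{\otimes(s-1)/2})$ with $3\le s\le h^2$ odd vanishes a priori---they generically contribute to $\pi_{-1}L_{K(h)}S^0$---so one must genuinely analyze the spectral sequence to see that they do not survive, and this is precisely where the strict inequality $2p-1>h^2$ enters: at the boundary $2p-1=h^2$ the $d_{2p-1}$-discrepancy reappears in range and can leave a nonzero exotic class in filtration $h^2$, which is the situation studied in the rest of the paper.
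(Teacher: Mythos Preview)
Your setup in the first paragraph is fine, but the argument goes off the rails in the second paragraph because you are missing the one ingredient that makes the paper's proof a two-line affair: \emph{sparseness}. The paper records (Lemma~\ref{lem:sparseness}) that $H^s_c(\G_h;\pi_t(E_h))=0$ unless $2(p-1)\mid t$, not merely unless $2\mid t$. With this in hand, the first differential that can possibly be nonzero on $\iota_X$ is $d_{2p-1}\colon E_2^{0,0}\to E_2^{2p-1,2p-2}$, and when $2p-1>h^2$ the target already lies above the horizontal vanishing line $s=h^2$. So every $d_r(\iota_X)$ vanishes for the trivial reason that its target group is zero; no comparison with the Picard spectral sequence, no Mathew--Stojanoska, no Poincar\'e duality is needed.

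Your attempted workaround does not actually close the gap. The Mathew--Stojanoska comparison relates the Picard descent spectral sequence to the HFPSS for $S^0_{K(h)}$, not to the HFPSS for $X$; there is no reason the differentials for $X$ should agree with those for $S^0$ in the relevant range (indeed, their potential disagreement is precisely what $\kappa_h$ measures). Moreover, even if you could show that $E_\infty^{s,s-1}=0$ in the HFPSS for $S^0$, that would not help: the differential $d_r(\iota_X)$ lands in $E_r^{r,r-1}(X)$, and a class there can be nonzero even if it is eventually killed in the spectral sequence for $S^0$. Your invocation of ``$\pi_0 L_{K(h)}S^0=\Z_p$ concentrated in filtration $0$'' concerns the $0$-stem, whereas the obstruction groups $E_2^{s,s-1}$ sit in the $(-1)$-stem, which is typically nonzero. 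Finally, a small slip: the condition ensuring $\Sbf_h$ is $p$-torsion-free (hence $\cd_p\G_h=h^2$) is $(p-1)\nmid h$, not $p\nmid h$; this is already part of the hypothesis, so you need not derive it from $2p-1>h^2$.
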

	In this paper, we study $\kappa_h$ when $2p-1=h^2$. The smallest of such pairs is $h=3$ and $p=5$. Notice that this assumption already implies $(p-1)\nmid h$.
	\begin{rmk*}
		It is an open question in number theory whether there are infinitely primes $p$ such that $2p-1$ is a perfect square (\cite[page 171]{Iwaniec1978}). Using SageMath \cite{sagemath}, the authors are able to find $35,528,083$ positive integers $h$ less than $10^9$ such that $\frac{h^2+1}{2}$ is a prime number.
	\end{rmk*}
	Our first main result is: 
	\begin{thm*}[A, {\Cref{thm:main_A}, \Cref{cor:fin_complex}}]
		Let $2p-1=h^2$. Suppose the type-$(h-1)$ Smith-Toda complex $V(h-2)=S^0/(p,v_1,\cdots, v_{h-2})$ exists at prime $p$. Then an exotic element $X\in \kappa_h$ cannot be detected by $V(h-2)$, i.e. 
		\begin{equation*}
			L_{K(h)}\left(X\wedge V(h-2)\right)\simeq L_{K(h)}V(h-2).
		\end{equation*}
		In particular,
		\begin{enumerate}
			\item At height $3$ and prime $5$, an exotic element $X$ in $\Pic_{K(3)}$ cannot be detected by $V(1)=S^0/(5,v_1)$.
			\item  At height $5$ and prime $13$, an exotic element $X$ in $\Pic_{K(5)}$ cannot be detected by $V(3)=S^0/(13,v_1,v_2,v_3)$.
		\end{enumerate} 
	\end{thm*}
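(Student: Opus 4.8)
The plan is to translate ``$X$ is detected by $V(h-2)$'' into a statement in the continuous cohomology of the extended Morava stabilizer group $\mathbb{G}_h=\mathbb{S}_h\rtimes\mathrm{Gal}(\mathbf F_{p^h}/\mathbf F_p)$, and then push that statement across Gross--Hopkins duality until it becomes a known Greek-letter computation. First I would run the Picard descent spectral sequence $E_2^{s,t}=H^s(\mathbb{G}_h;\pi_t\mathfrak{pic}(E_h))\Rightarrow\pi_{t-s}\mathfrak{pic}(L_{K(h)}S^0)$ of Heard and Mathew--Stojanoska. Since $\pi_t\mathfrak{pic}(E_h)\cong(E_h)_{t-1}$ for $t\ge 2$, the exotic group $\kappa_h$ only receives contributions on the diagonal $s=t\ge 2$; the central $\mathbf Z_p^\times\subset\mathbb{S}_h$ forces $H^\ast(\mathbb{G}_h;(E_h)_m)=0$ unless $2(p-1)\mid m$, and $\mathrm{cd}_p(\mathbb{G}_h)=h^2$ because $(p-1)\nmid h$ and $p>h$. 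When $2p-1=h^2$ the intersection of these constraints is a single group, on the line $s=t=h^2$: this is the boundary case of the sparseness argument of \cite{HMS_picard}, where the one class ruled out by their inequality $2p-1>h^2$ now sits exactly on the vanishing line. Thus $\kappa_h$ is a subquotient of $H^{h^2}(\mathbb{G}_h;(E_h)_{h^2-1})$, and the same bookkeeping applied to $L_{K(h)}V(h-2)$, using $E_h\wedge V(h-2)\simeq E_h/I_{h-1}$ with $I_{h-1}=(p,v_1,\dots,v_{h-2})$, places the ``exotic twists'' of $L_{K(h)}V(h-2)$ inside $H^{h^2}(\mathbb{G}_h;(E_h/I_{h-1})_{h^2-1})$. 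Because $E_{h\ast}X\cong E_{h\ast}$ as a Morava module for $X\in\kappa_h$, the module $X\wedge V(h-2)$ has the same Morava module as $V(h-2)$, and the obstruction to an equivalence $X\wedge V(h-2)\simeq V(h-2)$ is the image of $[X]$ under reduction $H^{h^2}(\mathbb{G}_h;(E_h)_{h^2-1})\to H^{h^2}(\mathbb{G}_h;(E_h/I_{h-1})_{h^2-1})$, modulo the relevant Picard differentials. (One must be a little careful that $V(h-2)$ need not be $\mathbf E_\infty$; for the primes involved it is $\mathbf E_1$, and one can argue directly with the obstruction theory for lifting the identity Morava-module map to an equivalence of $L_{K(h)}V(h-2)$-modules.)

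Next I would invoke Gross--Hopkins duality in the form $E_h\wedge I_h\simeq\Sigma^{h^2-h}E_h\langle\det\rangle$ together with $\pi_\ast F(Z,I_h)\cong\mathrm{Hom}(\pi_{-\ast}M_hZ,\mathbf Q/\mathbf Z)$. Since $V(h-2)$ is type $h-1$ one has $M_hV(h-2)\simeq L_{K(h)}V(h-2)$ and $DV(h-2)\simeq\Sigma^{-N}V(h-2)$ for $N$ the dimension of its top cell, so $I_h\wedge V(h-2)\simeq\Sigma^{N}F(L_{K(h)}V(h-2),I_{\mathbf Q/\mathbf Z})$ and its Morava module is $\Sigma^{h^2-h}(E_h/I_{h-1})\langle\det\rangle$. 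Comparing the descent spectral sequence for $I_h\wedge V(h-2)$ with that for $L_{K(h)}V(h-2)$, the detection group $H^{h^2}(\mathbb{G}_h;(E_h/I_{h-1})_{h^2-1})$ becomes Pontryagin-dual to one explicit internal-degree piece of $\pi_\ast L_{K(h)}V(h-2)$; feeding that through the algebraic chromatic spectral sequence and the generalized Morava change of rings rewrites it as a single internal degree of $\mathrm{Ext}^0_{BP_\ast BP}$ of a divided chromatic module built from $v_{h-1}$ and $I_{h-1}$ --- for $h=3$, the source of the divided $\beta$-family attached to $v_2^{-1}BP_\ast/(p,v_1)$ --- twisted by the determinant character. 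Pinning down this internal degree and the exact determinant twist, hence identifying which element of the known description of that $\mathrm{Ext}^0$ is being tested, is the delicate point, and is the step I expect to be the main obstacle; everything after it is a lookup.

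Finally I would apply Miller--Ravenel--Wilson. For $h=3$, $h-1=2$, $p=5$, this lands squarely in the range they computed: their determination of $\mathrm{Ext}^0$ (and of the low-cohomological-degree Ext groups) of $v_2^{-1}BP_\ast/(p,v_1)$-type modules gives the divided $\beta$-family internal-degree by internal-degree, and a check of the single relevant degree, with the determinant twist, shows the group there is zero. Hence the image of $\kappa_3$ under $X\mapsto[X\wedge V(1)]$ is trivial, i.e.\ $L_{K(3)}(X\wedge V(1))\simeq L_{K(3)}V(1)$ for every $X\in\kappa_3$, which is statement (1). For $h=5$, $p=13$ one repeats the argument with $V(3)$ --- which exists at $p=13$ --- and the corresponding height-$4$ divided chromatic module; the relevant internal degree again falls where the structure of $\mathrm{Ext}^0$ is available, and the group there vanishes, giving (2). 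The unqualified statement of Theorem~A is precisely these three steps carried out under the standing hypothesis that $V(h-2)$ exists.
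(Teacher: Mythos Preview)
Your overall architecture is the same as the paper's: reduce detection by $V(h-2)$ to the vanishing of $H^{h^2}_c(\mathbb{G}_h;\pi_{2p-2}(E_h)/I_{h-1})$, then apply Gross--Hopkins duality and Morava's change of rings to turn this into a single internal degree of an $\Ext^0_{BP_*BP}$ of a divided chromatic module, and finally look that degree up in Miller--Ravenel--Wilson. So the plan is right.

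There is, however, a genuine off-by-one error in the crucial identification. The $K(h)$-local Gross--Hopkins dual of $\pi_t(E_h)/I_{h-1}$ is controlled by the height-$h$ module $M^1_{h-1}:=v_h^{-1}BP_*/(p,v_1,\dots,v_{h-2},v_{h-1}^\infty)$, not a ``divided chromatic module built from $v_{h-1}$.'' For $h=3$ the relevant comodule is $v_3^{-1}BP_*/(p,v_1,v_2^\infty)$, whose $\Ext^0$ is the divided $\gamma$-family input, not the divided $\beta$-family attached to $v_2^{-1}BP_*/(p,v_1)$; for $h=5$ it is $v_5^{-1}BP_*/(p,v_1,v_2,v_3,v_4^\infty)$, not a ``height-$4$'' module. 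You are still in luck: Miller--Ravenel--Wilson compute $H^{0,*}(M^1_{h-1})$ for all $h$ (their Theorem~5.10, with the bounds $a_{h,N}$), and tracking the internal degree $2h-(2p-2)-\frac{p^N|v_h|}{p-1}$ through their answer gives zero, which is exactly the lookup the paper performs. If you had actually checked against $v_2^{-1}BP_*/(p,v_1)$ you would be computing at the wrong height and the degree bookkeeping would not match.

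Two smaller points where the paper is more direct than your outline. First, you do not need the full Picard descent spectral sequence to get the injection $\kappa_h\hookrightarrow H^{h^2}_c(\mathbb{G}_h;\pi_{2p-2}(E_h))$: the paper extracts this from the ordinary HFPSS for $X$ via the $d_{2p-1}$-differential (their $\ev_2$), using only sparseness and the vanishing line. Second, the passage from ``image of $[X]$ under reduction mod $I_{h-1}$ is zero'' to ``$X\wedge V(h-2)\simeq L_{K(h)}V(h-2)$'' does not require any ring structure on $V(h-2)$ or module-theoretic obstruction theory; the paper cites the description of the topology on $\Pic_{K(h)}$ in Goerss--Hopkins, which says precisely that elements of $\kappa_h$ mapping to zero in $H^{h^2}_c(\mathbb{G}_h;\pi_{2p-2}(E_h)/I_{h-1})$ become trivial after smashing with any complex realizing $E_h/I_{h-1}$. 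Your worry about $\mathbf{E}_1$-structures is unnecessary here.
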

	When $4p-3=h^2$, we prove a similar statement  in \Cref{thm:kappa_h_4p-3} for a subgroup $\kappa_h^{(1)}$ of the exotic Picard group $\kappa_h$ defined in \Cref{subsec:Pic_filtration}.  In particular at $(h,p)=(3,3)$ and $(5,7)$, we show that $V(h-2)$ cannot detect elements in this subgroup of $\kappa_h$. 
	
	Our method is also used to study the following special case of the Chromatic Vanishing Conjecture (\ref{conj:CVC}), first proposed in \cite{Beaudry_orbits,Beaudry-Goerss-Henn}. 
	\begin{conjecture*}[Reduced Homological Vanishing Conjecture, (RHVC)]
		\[\F_p\cong H_0(\G_h;\F_{p^h})\simto H_0(\G_h;\pi_0(E_h)/p). \]
	\end{conjecture*}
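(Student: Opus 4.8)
Write $R := \pi_0(E_h)/p = \F_{p^h}\llb u_1,\dots,u_{h-1}\rrb$ for the mod-$p$ Lubin--Tate ring and $\mf{m}=(u_1,\dots,u_{h-1})$ for its maximal ideal, and recall $\G_h\cong\mb{S}_h\rtimes\Gal(\F_{p^h}/\F_p)$ with $\mb{S}_h$ fixing $W(\F_{p^h})$ pointwise. The first isomorphism is standard and I would dispose of it at once: the $\G_h$-action on the residue field $\F_{p^h}=R/\mf{m}$ factors through $\Gal$, so $H_0(\G_h;\F_{p^h})=\F_{p^h}/(\Frob-1)\F_{p^h}$, which additive Hilbert~90 (equivalently, surjectivity of $\mathrm{Tr}_{\F_{p^h}/\F_p}$) identifies with $\F_p$ via the trace.

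The reduction $R\to\F_{p^h}$ is a map of profinite $\G_h$-modules with kernel $\mf{m}$, split by the (likewise $\G_h$-equivariant) inclusion of constants $\F_{p^h}\hookrightarrow R$; hence $R\cong\F_{p^h}\oplus\mf{m}$ as $\G_h$-modules and $H_0(\G_h;R)\cong\F_p\oplus H_0(\G_h;\mf{m})$, so that (RHVC) is precisely the vanishing of the continuous coinvariants $H_0(\G_h;\mf{m})$. My plan is to attack this through the $\mf{m}$-adic filtration $\mf{m}\supset\mf{m}^2\supset\cdots$: the graded pieces $\mf{m}^n/\mf{m}^{n+1}$ are the finite $\F_{p^h}$-vector spaces of degree-$n$ forms in the $u_i$, and since $R$ is $\mf{m}$-adically complete and $H_0(\G_h;-)$ passes to these profinite limits, it suffices to show $H_0(\G_h;\mf{m}^n/\mf{m}^{n+1})=0$ for all $n\ge1$. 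Running in turn the congruence filtration $\mb{S}_h\supset S_1\supset S_2\supset\cdots$ of the stabilizer group, whose graded quotients are elementary abelian and whose leading action on $\mf{m}/\mf{m}^2$ is the Lubin--Tate tangent action, each of these becomes in principle a finite Koszul-type computation over $\F_{p^h}$.

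One is tempted to dispatch the pieces with $n\ge1$ using the centre $\Z_p^\times\subset\mb{S}_h$: if its character on $\mf{m}^n/\mf{m}^{n+1}$ were nontrivial the coinvariants would vanish for free. But because the canonical classes $v_i$ are, up to units, $u_iu^{1-p^i}$ and $p^i\equiv1\pmod{p-1}$, the centre acts trivially on each $u_i$ modulo $p$, hence trivially on all of $R$. (In nonzero internal degrees the scaling action of $\Z_p^\times$ on powers of $u$ makes the analogous vanishing automatic; that it fails precisely in internal degree $0$ is what makes this case deserve a separate conjecture.) So the non-abelian structure of $\mb{S}_h$ is genuinely needed --- and it does suffice in low heights: for $h=2$ and $p\ge5$ one can extract (RHVC) from the Shimomura--Yabe computation of $H^*(\mb{S}_2;(E_2)_*/p)$, and partial information is available at $h=3$.

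The main obstacle is precisely this last step: there is no height-uniform handle on $H_0(\G_h;\mf{m})$, and supplying one is essentially the hardest part of the Chromatic Vanishing Conjecture. Rather than a head-on attack, the route consonant with the methods of this paper is to pair (RHVC) with Gross--Hopkins/Poincar\'e duality for $\G_h$: at the primes in question $\G_h$ is a Poincar\'e duality group of dimension $h^2$, so $H_0(\G_h;R)$ is identified with a top-degree continuous cohomology group $H^{h^2}_c(\G_h;R\otimes I_h)$ with $I_h$ the Gross--Hopkins twist, which one might hope to pin down by feeding in the $V(h-2)$-level information of Theorem~A together with the Miller--Ravenel--Wilson calculations. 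I expect this comparison --- transporting the $V(h-2)$-homology statement back up to the sphere modulo $p$ --- to be the crux, and it is the reason (RHVC) is recorded here as a conjecture rather than a theorem.
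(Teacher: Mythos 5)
The statement you were asked to prove is recorded in the paper as a \emph{conjecture} (the Reduced Homological Vanishing Conjecture); the paper contains no proof of it, only reformulations and partial results, so there is no argument of the authors to measure yours against, and your write-up rightly stops short of claiming a proof. Within that framing, the steps you do carry out are sound: the identification $H_0(\G_h;\F_{p^h})\cong\F_p$ via the trace and additive Hilbert 90 is correct (the $\Sbf_h$-action on the residue field is trivial because $\Sbf_h$ acts by local $\W$-algebra automorphisms); the $\G_h$-equivariant splitting $\pi_0(E_h)/p\cong\F_{p^h}\oplus\mf{m}$ correctly recasts RHVC as the vanishing of $H_0(\G_h;\mf{m})$, which agrees with the paper's formulation as the vanishing of the homology of $\coker(\iota\otimes\W/p)$; and your observation that the centre $\Zpx\subseteq\Sbf_h$ acts trivially on $\pi_0(E_h)/p$, so that no central character argument is available in internal degree $0$, is exactly why this degree is singled out --- the paper's \Cref{prop:Hh2_mod_p} exploits the nontriviality of that character on $\pi_{2p-2}(E_h)$, and that lever is absent here.

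The genuine gap is the one you name: no one knows how to kill $H_0(\G_h;\mf{m})$ at general height. Two cautions about your proposed route. First, passing to the $\mf{m}$-adic associated graded asks for $H_0(\G_h;\mf{m}^n/\mf{m}^{n+1})=0$ for every $n\ge1$, which by right-exactness of coinvariants is sufficient for RHVC but potentially strictly stronger: a class could die in the filtered module without dying in the associated graded, so a failure at the graded level would not refute the conjecture. Second, the paper's actual partial progress does not run through the congruence filtration of $\Sbf_h$ at all; it chains Poincar\'e duality ($H_0(\G_h;M)\cong H^{h^2}_c(\G_h;M)\cong H^0_c(\G_h;M^\vee)^\vee$), Gross--Hopkins duality to identify $M^\vee$ as a determinant twist, \Cref{thm:det_mod_p} to trade the twist for a suspension modulo each open invariant ideal, and Morava's change of rings to land in the Greek letter groups $H^{0,t}(M_1^{h-1})$. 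That machine gives RHVC modulo $I_{h-1}$ from Miller--Ravenel--Wilson (\Cref{cor:RHVC_Ih-2}) and the sufficient numerical bounds of \Cref{thm:main_C}, but completing it requires controlling the Family III elements through $h-2$ further Bockstein spectral sequences, which is precisely where the conjecture stays open. Your closing paragraph gestures at this duality route; note that the missing input is the Family III degree bounds of \Cref{prop:fam_III}, not the $V(h-2)$-level information of Theorem A, which only sees the situation modulo $I_{h-1}$.
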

	\begin{rmk*}
		The Vanishing Conjecture was stated in terms of group \emph{co}homology in \cite[Conjecture 1.1.4]{Beaudry-Goerss-Henn}. This is equivalent to the homological versions when $(p-1)\nmid h$ by Poincar\'e duality. See \Cref{rem:vanishing_conj}. 
	\end{rmk*}
	\begin{thm*}[B, {\Cref{thm:RHVC_Ih-2}}]
		When $(p-1)\nmid h$, the RHVC holds modulo the ideal $I_{h-1}=(p,u_1,\cdots, u_{h-2})$, i.e. there  are isomorphisms:
		\[\F_p\cong H_0(\G_h;\F_{p^h})\simto H_0(\G_h;\pi_0(E_h)/I_{h-1}). \]
	\end{thm*}
	\noindent Exotic Picard groups and the Vanishing Conjecture are related by:
	\begin{thm*}[C, {\Cref{thm:main_C}}]
		If the RHVC holds at height $3$, then $\kappa_3=0$ at $p=5$ and $\kappa_3^{(1)}=0$ at $p=3$, where $\kappa_3^{(1)}$ is a subgroup of $\kappa_3$ defined in \Cref{subsec:Pic_filtration}
	\end{thm*}
	For general heights and primes, we give some bounds on the divisibility of Greek letter elements that would imply the RHVC (when $(p-1)\nmid h$) and $\kappa_h=0$ (when $2p-1=h^2$) in \Cref{prop:bounds_RHVC_kappah}.
	\subsection{General strategy}\label{subsec:strategy}
	A summary of our strategy to study exotic Picard groups when $2p-1=h^2$ is as follows. We will show successively each claim below is implied by the following one. 
	\begin{enumerate}[label=\Roman*.,itemsep=1 ex]
		\item $\kappa_h=0$.
		\item $H^{h^2}_c(\Sbf_h;\pi_{2p-2}(E_h))=H^{2p-1}_c(\Sbf_h;\pi_{2p-2}(E_h))=0$.
		\item $H^{h^2}_c(\Sbf_h;\pi_{2p-2}(E_h)/p)=0$.
		\item $H^{0}_c(\Sbf_h;\pi_{2h-2p+2}(E_h)\ldetr/(p,u_1^\infty,\cdots,u_{h-1}^\infty))=0$, where the determinant twist $\ldetr$ is defined in \Cref{defn:det} and the quotient mod $(p,u_1^\infty,\cdots,u_{h-1}^\infty) $ is explained in \Cref{defn:mod_m_infty}. 
		\item $H^0_c\left(\Sbf_h;\pi_{2h-2p+2-\frac{p^N|v_h|}{p-1}}(E_h)/J\right)=0$ for any open invariant ideal $J\trianglelefteq \pi_0(E_h)$ containing $p$ such that $v_{h}^{p^N}$ is invariant mod $J$.
		\item $\Ext_{BP_*BP}^{0,2h-2p+2-\frac{p^N|v_h|}{p-1}}(BP_*,v_h^{-1}BP_*/J)=0$ for any invariant ideal $J\trianglelefteq v_{h}^{-1}BP_*$ containing $p$ such that $v_{h}^{p^N}$ is invariant mod $J$.
		\item $H^{0,t}(M^{h-1}_1)=0$ for any $t\equiv 2h-2p+2-\frac{p^N|v_h|}{p-1}\mod p^N|v_h|$ and all integers $N\ge 0$, where $M^{h-1}_1:=v_h^{-1}BP_*/(p,v_1^{\infty},\cdots,v_{h-1}^{\infty})$.
	\end{enumerate}	
	\textbf{II$\implies$I}:\quad In \cite{GHMR_Picard}, Goerss-Henn-Mahowald-Rezk defined a map that detects the exotic Picard group $\kappa_h$:
	\[\ev_2\colon \kappa_h\to H^{2p-1}_c(\G_h;\pi_{2p-2}(E_h)). \]
	Using the same argument as in \cite{HMS_picard}, we will show this map is injective when $(p-1)\nmid h$ and $4p-3>h^2$ in \Cref{prop:ev_inj}.\footnote{A descent spectral sequence for $K(h)$-local Picard groups in \cite[Example 6.18]{Heard_2021Sp_kn-local} implies this map is an isomorphism under the assumptions. See \Cref{prop:ev_surj}.} As a result, $\kappa_h$ vanishes if $H^{2p-1}_c(\G_h;\pi_{2p-2}(E_h))=0$ when $2p-1=h^2$. By \cite[Lemma 1.32]{Bobkova-Goerss} and \cite[page 12]{Goerss-Hopkins}, we have
	\[H^{s}_c(\G_h;\pi_{t}(E_h))\cong H^{s}_c(\Sbf_h;\pi_{t}(E_h))^{\Gal}\text{ for any $s$ and $t$,} \]	
	where $\Sbf_h\le \G_h$ is the automorphism group of the height $h$-Honda formal group. This indicates we just need to show the relevant group cohomology of $\Sbf_h$ is zero.\\ 
	\textbf{III$\implies$II}:\quad	Now suppose $2p-1=h^2$. By \Cref{thm:lazard} of Lazard and the fact $\Sbf_h$ has no finite $p$-group, $\cd_p(\Sbf_h)=h^2$. When $(p-1)\nmid h$, the cohomology we are computing $H^{2p-1}_c(\G_h;\pi_{2p-2}(E_h))=H^{h^2}_c(\G_h;\pi_{2p-2}(E_h))$ is a top degree cohomology. Using a Hochschild-Lyndon-Serre spectral sequence and the explicit formula of the action by the center $\Zpx$ of $\Sbf_h$, we show in \Cref{prop:Hh2_mod_p} that
	\[H^{h^2}_c(\G_h;\pi_{2p-2}(E_h))\simto H^{h^2}_c(\G_h;\pi_{2p-2}(E_h)/p). \]
	Alternatively, the above isomorphism can be proved using the Poincar\'e duality between top degree cohomology and zero degree homology. \\
	\textbf{IV$\implies$III}:\quad There is another Poincar\'e duality between top and zero degree cohomology groups for any $p$-complete $\G_h$-module $M$:
	\[H^{h^2}_c(\Sbf_h;M)\cong H^{0}_c(\Sbf_h;M^\vee)^\vee,  \]
	where $(-)^\vee:=\hom_c(-,\Qp/\Zp)$ is the continuous equivariant Pontryagin dual (\Cref{defn:Pontryagin_dual}). For $M=\pi_t(E_h)$, the dual $M^\vee$ is identified by Gross-Hopkins duality \Cref{cor:GH_dual}:
	\[\pi_t(E_h)^\vee\cong \pi_{2h-t}(E_h)\ldetr/\mfrak^\infty, \]
	where $\mfrak=(p,u_1,\cdots,u_{h-1})\trianglelefteq \pi_0(E_h)$ is the maximal ideal, mod $\mfrak^\infty$ is defined in \Cref{defn:mod_m_infty}, and $\ldetr$ is the determinant twist  defined in \Cref{defn:det}). In the case when $t=2p-2$, we further have:
	\begin{align*}
		H^{h^2}_c(\Sbf_h;\pi_{2p-2}(E_h))&\cong H^{h^2}_c(\Sbf_h;\pi_{2p-2}(E_h)/p)\\&\cong H_c^0(\Sbf_h;\pi_{2h-2p+2}(E_h)\ldetr/(p,u_1^\infty,\cdots,u_{h-1}^\infty))^\vee .
	\end{align*}
	\textbf{V$\implies$IV}:\quad In \cite{Hopkins_1994}, Gross-Hopkins identified the determinant twist mod $p>2$ with a limit of finite suspensions:
	\[\pi_0(E_h)\ldetr/p\cong \Sigma^{\lim\limits_{N\to \infty}\frac{p^N|v_h|}{p-1}}\pi_0(E_h)/p. \]
	This is a limit in the \emph{algebraic} $K(h)$-local Picard group. More precisely, let $J\trianglelefteq \pi_0(E_h)$ be an open invariant ideal containing $p$, such that $v_h^{p^N}$ is invariant modulo $J$. Then \[\pi_0(E_h)\ldetr/J \cong \Sigma^{\frac{{p^N|v_h|}}{p-1}}\pi_0(E_h)/J. \]
	
	By \Cref{prop:Hh2_reduction}, we now have 
	\begin{align*}
		&H_c^0(\Sbf_h;\pi_{2h-2p+2}(E_h)\ldetr/(p,u_1^\infty,\cdots,u_{h-1}^\infty))\\\cong&\colim_{p\in J\trianglelefteq \pi_0(E_h)}H^0_c\left(\Sbf_h;\left.\pi_{2h-2p+2-\frac{p^N|v_h|}{p-1}}(E_h)\right/J\right).
	\end{align*}
	As a result, to show the left hand side is zero, it suffices to show every single term in the colimit system on right hand side is zero. \\
	\textbf{VI$\implies$V}\quad Using a Change of Rings theorem, \Cref{thm:CoR}, we relate the group cohomology of $\G_h$ with Ext-groups of $BP_*BP$-comodules:
	\[H^s(\G_h;\pi_t(E_h)/J)\cong \Ext_{BP_*BP}^{s,t}(BP_*,v_h^{-1}BP_*/J') \]
	for some invariant ideal $J'\trianglelefteq v_h^{-1}BP_*$. When $J=(p,u_1^{j_1},\cdots, u_{h-1}^{j_{h-1}})$, we can take $J'=(p,v_1^{j_1},\cdots, v_{h-1}^{j_{h-1}})$. As a result, we need to compute $\Ext_{BP_*BP}^{0,t}(BP_*,v_h^{-1}BP_*/J')$ for certain values of $t$.\\
	\textbf{VII$\implies$VI}\quad  For a $BP_*BP$-comodule $M$, we denote $\Ext_{BP_*BP}^{s,t}(BP_*,M)$ by $H^{s,t}(M)$. The colimit of the cohomology groups $H^{0,t}(v_h^{-1}BP_*/J)$ over all invariant ideals $J\trianglelefteq v_h^{-1}BP_*$ containing $p$ is $H^{0,t}(M^{h-1}_1)$, where $M_{1}^{h-1}=v_h^{-1}BP_*/(p,v_1^\infty,\cdots,v_{h-1}^\infty)$. This is the group of mod-$p$ Greek letter elements at height $h$. Keeping track of the degree $t$, we have reduced our computation to the following:
	\begin{prop*}
		Suppose $2p-1=h^2$. If $H^{0,t}(M^{h-1}_1)=0$ whenever $t\equiv 2h-2p+2-\frac{p^N|v_h|}{p-1}\mod p^N|v_h|$ for some integer $N\ge 0$, then $\kappa_h=0$. 
	\end{prop*}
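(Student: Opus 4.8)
The hypothesis is precisely statement~VII of \Cref{subsec:strategy} --- the clauses ``for some $N\ge0$'' and ``for all $N\ge0$'' cut out the same set of internal degrees $t$ --- so the plan is to run the seven implications VII$\implies$VI$\implies\cdots\implies$I set up there, each of which is supplied by a result proved elsewhere in the paper, and conclude $\kappa_h=0$.

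\textbf{VII $\implies$ IV.} Since $M^{h-1}_1=v_h^{-1}BP_*/(p,v_1^\infty,\dots,v_{h-1}^\infty)$ is the filtered colimit of the comodules $v_h^{-1}BP_*/J$ over open invariant ideals $J\ni p$, and $\Ext^{0,t}_{BP_*BP}(BP_*,-)$ commutes with filtered colimits, I would deduce VI from VII once I know that in the degrees $t_0=2h-2p+2-\tfrac{p^N|v_h|}{p-1}$ each finite-level group $\Ext^{0,t_0}_{BP_*BP}(BP_*,v_h^{-1}BP_*/J)$ is detected in the colimit $H^{0,t_0}(M^{h-1}_1)$. Next, the Change-of-Rings isomorphism \Cref{thm:CoR} turns VI into V, identifying $H^0_c(\Sbf_h;\pi_{t_0}(E_h)/J)$ with $\Ext^{0,t_0}_{BP_*BP}(BP_*,v_h^{-1}BP_*/J')$ for a matching invariant ideal $J'$. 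Finally, V$\implies$IV will follow from the Gross--Hopkins formula $\pi_0(E_h)\ldetr/J\simeq\Sigma^{p^N|v_h|/(p-1)}\pi_0(E_h)/J$ of \cite{Hopkins_1994} together with \Cref{prop:Hh2_reduction}, which exhibits $H^0_c(\Sbf_h;\pi_{2h-2p+2}(E_h)\ldetr/(p,u_1^\infty,\dots,u_{h-1}^\infty))$ as a colimit of the groups appearing in~V; vanishing of every term of that colimit yields~IV.

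\textbf{IV $\implies$ I.} Since $2p-1=h^2=\cd_p(\Sbf_h)$ by \Cref{thm:lazard}, $H^{h^2}_c(\Sbf_h;-)$ is top degree and Poincar\'e duality applies: $H^{h^2}_c(\Sbf_h;M)\simeq H^0_c(\Sbf_h;M^\vee)^\vee$. Feeding in Gross--Hopkins duality $\pi_t(E_h)^\vee\simeq\pi_{2h-t}(E_h)\ldetr/\mfrak^\infty$ at $t=2p-2$ identifies the dual of $\pi_{2p-2}(E_h)/p$ with the module of~IV, giving IV$\implies$III. Then \Cref{prop:Hh2_mod_p}, proved via the Hochschild--Lyndon--Serre spectral sequence for the central $\Zpx\le\Sbf_h$ and the explicit central action, gives the reduction $H^{h^2}_c(\G_h;\pi_{2p-2}(E_h))\simto H^{h^2}_c(\G_h;\pi_{2p-2}(E_h)/p)$, which together with Galois descent $H^s_c(\G_h;-)\simeq H^s_c(\Sbf_h;-)^{\Gal}$ upgrades III to~II. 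Lastly \Cref{prop:ev_inj} shows the Goerss--Henn--Mahowald--Rezk evaluation map $\ev_2\colon\kappa_h\to H^{2p-1}_c(\G_h;\pi_{2p-2}(E_h))$ is injective when $(p-1)\nmid h$ and $4p-3>h^2$, both of which are automatic when $2p-1=h^2$ (then $4p-3=2h^2-1>h^2$), so the vanishing in~II forces $\kappa_h=0$.

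The main obstacle will be the degree bookkeeping that makes the chain close up on exactly the congruence classes in the statement: one has to push the internal degree through the determinant twist (a shift by $p^N|v_h|/(p-1)$), through Gross--Hopkins duality ($t\mapsto 2h-t$), and through Change of Rings, and verify that the groups one needs to kill are precisely the $H^{0,t}(M^{h-1}_1)$ with $t\equiv 2h-2p+2-\tfrac{p^N|v_h|}{p-1}\pmod{p^N|v_h|}$ and no others. The one implication that is not purely formal is VII$\implies$VI: passing from vanishing of the colimit $H^{0,t}(M^{h-1}_1)$ to vanishing of $\Ext^{0,t_0}$ of each finite quotient $v_h^{-1}BP_*/J$ requires knowing these finite-level Greek-letter classes inject into $M^{h-1}_1$ in the relevant (generic) degrees, which I expect to extract from the structure of invariant elements of $v_h^{-1}BP_*/J$. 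Everything else is the assembly of \Cref{thm:CoR}, \Cref{prop:Hh2_reduction}, \Cref{prop:Hh2_mod_p}, \Cref{prop:ev_inj}, \Cref{thm:lazard}, and \cite{Hopkins_1994} in the order above.
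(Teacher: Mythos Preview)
Your proposal is correct and follows essentially the same route as the paper: the proposition is precisely the summary of the chain VII$\implies$VI$\implies\cdots\implies$I in \Cref{subsec:strategy}, and you invoke the same ingredients in the same order (\Cref{thm:CoR}, \Cref{prop:Hh2_reduction}, \Cref{thm:det_mod_p}, \Cref{prop:Hh2_mod_p}, \Cref{prop:ev_inj}). Your one flagged subtlety, VII$\implies$VI, is handled in the paper by the fact that $M^{h-1}_1$ is a filtered colimit of the $v_h^{-1}BP_*/J$ along \emph{injective} transition maps, so left-exactness of $H^{0,t}$ gives the needed injection of each finite-level group into $H^{0,t}(M^{h-1}_1)$; the degree shift incurred by this identification is exactly what produces the congruence modulo $p^N|v_h|$.
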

	The argument above can also be used to study the Chromatic Vanishing Conjecture (\ref{conj:CVC}) in degree $0$ homology groups when $(p-1)\nmid h$. This conjecture has been verified at all primes at heights $1$ and $2$ by explicit computations. It plays an essential role in Beaudry-Goerss-Henn's works in \cite{Beaudry-Goerss-Henn} to disprove and completely understand the Chromatic Splitting Conjecture at $h=p=2$. The Vanishing Conjecture is wide open at $h\ge 3$. Using Gross-Hopkins duality and Change of Rings theorem, we can translate the Reduced Homological Vanishing Conjecture \eqref{eqn:RHVC} to Greek letter element computations:  
	\begin{prop*}
		Suppose $p-1$ does not divide $h$. If $H^{0,t}(M^{h-1}_1)=\F_p$ whenever $t\equiv 2h-\frac{p^N|v_h|}{p-1}\mod p^N|v_h|$ for some integer $N\ge 0$, then $H_0(\G_h;\pi_0(E_h)/p)=\F_p$ and the RHVC holds.
	\end{prop*}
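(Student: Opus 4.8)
The plan is to run the analogue of the chain I.--VII.\ of \Cref{subsec:strategy} with the coefficient module $\pi_0(E_h)/p$ in place of $\pi_{2p-2}(E_h)$, now targeting the coinvariants $H_0(\G_h;-)=\F_p$ rather than a vanishing cohomology group. Since we compute $H_0(\G_h;-)$ directly (not a top cohomology group via the $\ev_2$ map), the hypothesis $2p-1=h^2$ is never used --- only $(p-1)\nmid h$, which is all the cited duality and change-of-rings inputs require. There is, however, one ingredient absent from the $\kappa_h$ story: because the target is $\F_p$ and not $0$, we must establish an a priori \emph{lower} bound to see that the colimit computing $H_0(\G_h;\pi_0(E_h)/p)$ is nonzero.

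\textbf{The lower bound.} The Morava stabilizer group $\Sbf_h$ acts trivially on the residue field $\F_{p^h}=\pi_0(E_h)/\mfrak$, and $\G_h$ surjects onto $\Gal(\F_{p^h}/\F_p)$, so $H_0(\G_h;\F_{p^h})=(\F_{p^h})_{\Gal}$; additive Hilbert $90$ (surjectivity of the trace) identifies this with $\F_p$, which is the first isomorphism of RHVC. Applying the right-exact functor $(-)_{\G_h}$ to the reduction $\pi_0(E_h)/p\twoheadrightarrow\F_{p^h}$ produces a surjection $H_0(\G_h;\pi_0(E_h)/p)\twoheadrightarrow\F_p$; in particular the source is nonzero. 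Finally, the unit map $\F_{p^h}\to\pi_0(E_h)/p$ is $\G_h$-equivariant (the Lubin--Tate action of $\Sbf_h$ fixes $\W(\F_{p^h})\subseteq\pi_0(E_h)$, and $\Gal$ acts semilinearly over it), and composing it with the reduction gives the identity of $\F_{p^h}$; hence $H_0(\G_h;\F_{p^h})$ is a retract of $H_0(\G_h;\pi_0(E_h)/p)$. It therefore suffices to prove that $H_0(\G_h;\pi_0(E_h)/p)$ is at most one-dimensional over $\F_p$.

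\textbf{The upper bound.} By Pontryagin duality, $H_0(\G_h;\pi_0(E_h)/p)^\vee\cong H^0_c\bigl(\G_h;(\pi_0(E_h)/p)^\vee\bigr)$. Applying $(-)^\vee$ to $0\to\pi_0(E_h)\xrightarrow{p}\pi_0(E_h)\to\pi_0(E_h)/p\to 0$ and Gross--Hopkins duality $\pi_t(E_h)^\vee\simeq\pi_{2h-t}(E_h)\ldetr/\mfrak^\infty$ identifies $(\pi_0(E_h)/p)^\vee$ with the $p$-torsion submodule of $\pi_{2h}(E_h)\ldetr/\mfrak^\infty$, namely $\pi_{2h}(E_h)\ldetr/(p,u_1^\infty,\cdots,u_{h-1}^\infty)$. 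From here I would transcribe the reductions V$\implies$IV through VII$\implies$VI verbatim (these use $(p-1)\nmid h$, hence $\cd_p(\Sbf_h)=h^2$ via \Cref{thm:lazard}, but nothing about $2p-1=h^2$): the Gross--Hopkins description of the determinant twist modulo $p$ from \cite{Hopkins_1994} writes this module as a filtered colimit over open invariant ideals $J\trianglelefteq\pi_0(E_h)$ containing $p$ for which $v_h^{p^N}$ is invariant mod $J$, and commuting $H^0_c$ past the colimit and then invoking the change-of-rings theorem \Cref{thm:CoR} together with \Cref{prop:Hh2_reduction} yields
\[ H_0(\G_h;\pi_0(E_h)/p)^\vee\ \cong\ \colim_{N\ge 0}\ H^{0,\,2h-\frac{p^N|v_h|}{p-1}}\bigl(M^{h-1}_1\bigr), \]
a filtered colimit of $\F_p$-vector spaces. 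Each term is a value of $H^{0,t}(M^{h-1}_1)$ with $t\equiv 2h-\frac{p^N|v_h|}{p-1}\pmod{p^N|v_h|}$, hence equals $\F_p$ by hypothesis. A filtered colimit of at-most-one-dimensional $\F_p$-vector spaces is again at most one-dimensional, and the lower bound rules out $0$; so $H_0(\G_h;\pi_0(E_h)/p)^\vee\cong\F_p$, giving $H_0(\G_h;\pi_0(E_h)/p)\cong\F_p$. The retract $H_0(\G_h;\F_{p^h})\hookrightarrow H_0(\G_h;\pi_0(E_h)/p)$ of two copies of $\F_p$ is then an isomorphism, which is exactly RHVC.

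\textbf{Main obstacle.} Granting the (assumed) Greek-letter input, the delicate points are the degree bookkeeping and the colimit. One must track the shift $2h$ (in place of the $2h-2p+2$ of the $\kappa_h$ argument) through Gross--Hopkins duality and change of rings so that the congruence modulus emerges as $p^N|v_h|$ rather than $|v_h|$ --- the larger modulus being precisely because at each finite stage only $v_h^{p^N}$, not $v_h$ itself, is invariant modulo $J$ --- and one must ensure that the filtered colimit of copies of $\F_p$ does not collapse to $0$, which is exactly where the otherwise-elementary identification $H_0(\G_h;\F_{p^h})=\F_p$ enters essentially. Everything else is a transcription of the dictionary already established in \Cref{thm:CoR}, \Cref{prop:Hh2_reduction}, and \cite{Hopkins_1994}, with the single substitution of $\pi_0(E_h)/p$ for $\pi_{2p-2}(E_h)$.
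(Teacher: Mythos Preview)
Your proof is correct and follows essentially the same route as the paper: Poincar\'e/Gross--Hopkins duality and the change-of-rings theorem reduce the computation to a filtered colimit of $H^0$'s, each of which is bounded above by the hypothesized $\F_p$. The one genuine difference is your handling of the \emph{lower} bound. The paper (in \Cref{cor:fam_I} and the remark in \S2.4 about ``the structure maps in the colimit are non-zero'') tracks the Family~I element $\frac{v_h^s}{pv_1\cdots v_{h-1}}$ through every stage of the colimit to force a nonzero limit; you instead observe that the $\G_h$-equivariant retraction $\F_{p^h}\hookrightarrow\pi_0(E_h)/p\twoheadrightarrow\F_{p^h}$ makes $H_0(\G_h;\F_{p^h})=\F_p$ a summand of $H_0(\G_h;\pi_0(E_h)/p)$ a priori. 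Your argument is cleaner here --- it requires no Greek-letter input and automatically handles the ``structure maps nonzero'' issue the paper flags explicitly --- while the paper's approach has the advantage of identifying \emph{which} element survives.

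Two minor points of precision in your write-up. First, the displayed colimit should be indexed by open invariant ideals $J$ (as in \Cref{prop:Hh2_reduction}), not by $N$; and at each $J$ the term is a \emph{subgroup} of $H^{0,t}(M^{h-1}_1)$ for the relevant $t$, not literally equal to it. Your conclusion ``at most one-dimensional'' is unaffected. Second, once you have $H_0(\G_h;\pi_0(E_h)/p)\cong\F_p$ and the retract $\F_p\hookrightarrow H_0(\G_h;\pi_0(E_h)/p)$, the map $\iota_*$ of RHVC is the retract inclusion and hence an isomorphism, exactly as you say.
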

	\subsection{Greek letter element computations}
	Next, we need to compute the Greek letter elements in $H^{0,t}(M^{h-1}_1)$. Elements in this group are classified into three families in \Cref{prop:families}. 
	\begin{enumerate}
		\item Family I elements are of the form $\frac{v^s_h}{pv_1\cdots v_{h-1}}$, where $(s,p)=1$. In \Cref{prop:fam_I}, we prove Family I elements contribute to a copy $\F_p$ in $H^{h^2}_c(\G_h;\pi_0(E_h)/p)$ via Gross-Hopkins duality, which is predicted in the RHVC. This family does not contribute to $H^{h^2}_c(\G_h;\pi_{2p-2}(E_h)/p)$.
		\item Family II elements are of the form $\frac{1}{pv_1^{d_1}\cdots v_{h-1}^{d_{h-1}}}$, where $(p,v_1^{d_1},\cdots,v_{h-1}^{d_{h-1}})$ is an invariant ideal. In \Cref{cor:fam_II}, we show this family does not contribute to either $H^{h^2}_c(\G_h;\pi_0(E_h)/p)$ or $H^{h^2}_c(\G_h;\pi_{2p-2}(E_h)/p)$.
		\item Family III elements are of the form $\frac{y^s_{h,N}}{pv_1^{d_1}\cdots v_{h-1}^{d_{h-1}}}$, where $y_{h,N}$ is some replacement of $v_h^{p^N}$, $(s,p)=1$ and $(p,v_1^{d_1},\cdots,v_{h-1}^{d_{h-1}}, y^s_{h,N})$ is an invariant regular ideal. While the precise conditions on the $d_i$'s are out of reach in the general situation, we established some bounds in \Cref{prop:fam_III} which would imply this family does not contribute to either $H^{h^2}_c(\G_h;\pi_0(E_h)/p)$ or $H^{h^2}_c(\G_h;\pi_{2p-2}(E_h)/p)$.
	\end{enumerate}
	Combining the three cases above, we obtain the bounds on divisibility of Greek letter elements that would imply the RHVC (when $(p-1)\nmid h$) and vanishing of $\kappa_h$ (when $2p-1=h^2$) in \Cref{prop:bounds_RHVC_kappah}. 
	
	In \cite{MRW}, Miller-Ravenel-Wilson computed $H^{0,*}(M^1_{h-1})$, where $M_{h-1}^1:=v_h^{-1}BP_*/(p,v_1,\cdots,v_{h-2},v_{h-1}^{\infty})$. Using Gross-Hopkins duality and Morava's Change of Rings Theorem, the Miller-Ravenel-Wilson computation yields when $(p-1)\nmid h$, \begin{align*}
		H^{h^2}_c(\G_h;\pi_{0}(E_h)/I_{h-1})&=\F_p,\\
		H^{h^2}_c(\G_h;\pi_{2p-2}(E_h)/I_{h-1})&=0.
	\end{align*} 
	It follows from first isomorphism that the RHVC holds modulo the ideal $I_{h-2}=(p,u_1,\cdots,u_{h-2})\trianglelefteq \pi_0(E_h)$. This is the statement of Main Theorem B \ref{thm:RHVC_Ih-2}.  The second group cohomology measures if there is an exotic element in $\Pic_{K(h)}$ detected by the type-$(h-1)$ Smith-Toda complex $V(h-2):=S^0/(p,v_1,\cdots,v_{h-2})$, provided the latter exists. Consequently, its vanishing yields Theorem A (\ref{thm:main_A}). At height $3$ and prime $5$, we further show in Theorem C (\ref{thm:main_C}) that the RHVC implies $\kappa_3=0$. This proof relies on the Miller-Ravenel-Wilson results. 
	\begin{rmk*}[{\ref{rem:ST_cpx} and \ref{rem:fin_cpx}}]
		We learned from a referee that it is an open question whether $V(h)$ exists when $h\ge 4$ at \emph{any} prime. By \cite[Corollary 7.11]{Hovey-Strickland_1999}, if $X\wedge_{K(h)} V\simeq V$ for all $X\in \kappa_h$ and finite complexes $V$ of type $n$, then $\kappa_h=0$. Main Theorem A (\ref{thm:main_A}) can therefore be thought of as a first step towards showing $\kappa_h=0$ when $2p-1=h^2$, since it implies $X\wedge_{K(h)} V$ for any cofibers $V$ of $v_h$-self maps of $V(h-2)$. Our choices of finite complexes are restricted to cofibers of the Smith-Toda complex $V(h-2)$, because we do not have better Greek letter element computations beyond $H^0(M^{1}_{h-1})$ in \cite{MRW} when $h\ge 3$. 
	\end{rmk*}
	\subsection{Notations and Conventions}
	
	Throughout, we will let $E_h$ denote a fixed Morava $E$-theory based on a height $h$ formal group, typically the height $h$ Honda formal group $\Gamma_h$. For a $K(h)$-local spectrum $X$, we will write $(E_h)_*X$ for the completed $E_h$-homology of $X$. That is, we write 
	\[
	(E_h)_*X:= \pi_*(L_{K(h)}(E_h\wedge X)). 
	\]
	We will also write $X\wedge_{K(h)}Y$ for the $K(h)$-local smash product $L_{K(h)}(X\wedge Y)$.
	
	Denote by $\W:=\W\F_{p^h}$ the ring of Witt vectors over $\F_{p^h}$. We will write $\Sbf_h$ for the Morava stabilizer group, i.e. the automorphisms of a $\Gamma_h$, and we will write $\G_h$ for the extended Morava stabilizer group.
	
	\subsection{Acknowledgments}
	The authors would like to thank Matt Ando, Agn\`es Beaudry, Mark Behrens, Paul Goerss, Hans-Werner Henn,  Guchuan Li, Doug Ravenel, and Vesna Stojanoska for helpful discussions related to this work. We would also like to thank the anonymous referees  for their comments and suggestions on revisions. Some of this work was done while the first author was at the Max-Planck-Institute for Mathematics, and the second author was at University of Illinois Urbana-Champaign as a visiting scholar. We would like to thank both institutes for their hospitality and support. 
	
	N. Zhang was partially supported by NSF Grant DMS-2304719 during the revision of this work. 
	\section{The $K(h)$-local Picard group}
	\subsection{Definitions}	
	In chromatic homotopy theory, we study the stable homotopy category of spectra $\Sp$ via the height filtration of the moduli stack of formal groups at each prime $p$. One such layer in this filtration is the category of $K(h)$-local spectra $\Sp_{K(h)}$, where $K(h)$ is the Morava $K$-theory at $h$ and prime $p$. Like $\Sp$, the category $\Sp_{K(h)}$ also has a symmetric monoidal structure 
	\[X\wedge_{K(h)}Y:=L_{K(h)}(X\wedge Y).\]
	For $\Sp$, its Picard group is given by
	\begin{thm}[{\cite[page 90]{HMS_picard}}]
		The map $\Z\to \Pic(\Sp), n\mapsto S^n$ is an isomorphism of groups. 
	\end{thm}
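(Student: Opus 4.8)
The plan is to check that the homomorphism $\Z \to \Pic(\Sp)$, $n \mapsto S^n$, is both injective and surjective. It is a homomorphism since $S^m \wedge S^n \simeq S^{m+n}$ and $S^0$ is the monoidal unit. Injectivity is immediate: $H_*(S^n;\Z)$ is a single copy of $\Z$ in degree $n$, so $S^m \simeq S^n$ forces $m = n$. The content is surjectivity, so fix an invertible spectrum $X$ with inverse $Y$, i.e. $X \wedge Y \simeq S^0$.

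\emph{Step 1: reduce to a homology computation.} An invertible object is dualizable, hence a retract of a finite spectrum; in particular $H_*(X;\Z)$ is a bounded, finitely generated graded abelian group. For each prime $p$, base change along $S^0 \to \mr{H}\F_p$ is symmetric monoidal, so $\mr{H}\F_p \wedge X$ and $\mr{H}\F_p \wedge Y$ are mutually inverse $\mr{H}\F_p$-module spectra. Since every $\mr{H}\F_p$-module is a wedge of suspensions of $\mr{H}\F_p$, this says exactly that $H_*(X;\F_p)$ and $H_*(Y;\F_p)$ are mutually inverse objects in graded $\F_p$-vector spaces, i.e. $H_*(X;\F_p)$ is one-dimensional, concentrated in a single degree $n_p$. (The analogous argument with $\Q$-coefficients, or the integral argument below, shows $H_*(X;\Q)$ is at most one-dimensional.)

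\emph{Step 2: patch over all primes.} Feed ``$H_*(X;\F_p)$ supported in degree $n_p$'' into the universal coefficient sequence $H_m(X;\F_p)\cong (H_m(X;\Z)\otimes \F_p)\oplus \mr{Tor}(H_{m-1}(X;\Z),\F_p)$. For $m\neq n_p$ one gets $H_m(X;\Z)\otimes\F_p=0$, so by finite generation $H_m(X;\Z)$ is finite of order prime to $p$; at $m=n_p$ one gets that $H_{n_p}(X;\Z)$ has no $p$-torsion and $H_{n_p}(X;\Z)\otimes\F_p\cong\F_p$, so $H_{n_p}(X;\Z)\cong \Z$ up to a finite group of order prime to $p$. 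Comparing two distinct primes rules out $n_p\neq n_q$ (an abelian group cannot be simultaneously finite and of the form $\Z\oplus(\text{finite})$), so $n_p=:n$ is independent of $p$; ranging over all primes then forces $H_m(X;\Z)=0$ for $m\neq n$ and $H_n(X;\Z)\cong \Z$.

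\emph{Step 3: identify $X$.} Since $X$ is a retract of a finite spectrum it is bounded below, so the Hurewicz theorem applies: $\pi_m(X)=0$ for $m<n$ and $\pi_n(X)\cong H_n(X;\Z)\cong\Z$. A generator gives a map $S^n\to X$ inducing an isomorphism on $H_n(-;\Z)$, hence on all of $H_*(-;\Z)$; by the Whitehead theorem for bounded-below spectra it is an equivalence, so $[X]=[S^n]$. I expect the genuine obstacle to be Step 2: the mod-$p$ (and rational) homology calculations are individually trivial, but assembling them into the statement that $H_*(X;\Z)$ is a single copy of $\Z$ in a prime-independent degree uses the finiteness of $X$ in an essential way, and it is this dualizability input that makes the surjectivity work.
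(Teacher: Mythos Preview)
The paper does not supply a proof of this theorem; it simply records the statement with a citation to \cite{HMS_picard} as background before turning to the $K(h)$-local setting. So there is nothing in the paper to compare your argument against.

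That said, your proof is correct and is essentially the standard one. A couple of minor remarks: in Step 1 you can say a bit more---dualizable objects in $\Sp$ are precisely the finite spectra (not merely retracts thereof, since finite spectra form a thick subcategory), so $X$ is finite outright. In Step 2, the key point you are implicitly using to dismiss the Tor term at $m=n_p$ is that $H_{n_p-1}(X;\Z)\otimes\F_p=0$ together with finite generation forces $H_{n_p-1}(X;\Z)$ to be finite of order prime to $p$, hence also $\mr{Tor}(H_{n_p-1}(X;\Z),\F_p)=0$; you have the right conclusion but the logic is slightly compressed. Otherwise the patching argument over all primes and the Hurewicz/Whitehead conclusion are clean.
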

	The Picard group $\Pic_{K(h)}$ for $\Sp_{K(h)}$, however, is still not fully understood. Here we give a filtration on $\Pic_{K(h)}$ via a sequence of algebraic detection maps $\ev_i$. The first fact is:
	\begin{thm}[{\cite[Theorem 1.3]{HMS_picard}}]\label{thm: E homology invertible}
		The followings are equivalent:
		\begin{itemize}
			\item $X\in \Sp_{K(h)}$ is invertible.
			\item $(E_h)_*(X)$ is an invertible graded $(E_h)_*$-module. 
		\end{itemize}
	\end{thm}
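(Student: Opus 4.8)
The plan is to deduce both implications from algebra over the coefficient ring $E_*:=(E_h)_*=\pi_*E_h$ — a complete regular local ring with maximal ideal $\mfrak$ and residue field $K(h)_*$ — using standard structural facts about $\Sp_{K(h)}$ (Hovey--Strickland) together with a Nakayama-type argument. \textbf{Step 1 (detection).} First I would show that $(E_h)_*$ detects triviality on $\Sp_{K(h)}$: if $Z\in\Sp_{K(h)}$ has $(E_h)_*Z=0$ then $L_{K(h)}(E_h\wedge Z)\simeq 0$, so $K(h)\wedge Z$, being a retract of $K(h)\wedge E_h\wedge Z\simeq K(h)\wedge L_{K(h)}(E_h\wedge Z)\simeq 0$ (here $K(h)$ is a unital $E_h$-module, and $K(h)\wedge(-)$ only sees the $K(h)$-localization), is itself $\simeq 0$; hence $Z\simeq 0$ because a $K(h)$-acyclic $K(h)$-local spectrum vanishes. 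In particular a map of $K(h)$-local spectra inducing an isomorphism on $(E_h)_*$ is an equivalence.

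\textbf{Step 2 (invertible $\Rightarrow$ $(E_h)_*X$ invertible).} Given $X\otimes_{K(h)}X^{-1}\simeq L_{K(h)}S^0$, apply $K(h)_*$; since $K(h)_*$ is a graded field this yields $K(h)_*X\otimes_{K(h)_*}K(h)_*X^{-1}\cong K(h)_*$, so $K(h)_*X$ is free of rank one over $K(h)_*$, in particular concentrated in a single parity. By Hovey--Strickland this forces $(E_h)_*X$ to be pro-free over $E_*$ with $(E_h)_*X/\mfrak\cong K(h)_*X$ one-dimensional over $K(h)_*$. A lift of a generator produces $\Sigma^dE_*\to(E_h)_*X$, an isomorphism mod $\mfrak$ between pro-free modules, hence an isomorphism by topological Nakayama. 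Thus $(E_h)_*X\cong\Sigma^dE_*$ is an invertible graded $E_*$-module.

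\textbf{Step 3 ($(E_h)_*X$ invertible $\Rightarrow$ $X$ invertible).} An invertible graded module over the graded-local ring $E_*$ is $\cong\Sigma^dE_*$; reducing mod $\mfrak$ shows $K(h)_*X$ is one-dimensional, so (Hovey--Strickland again) $X$ is dualizable in $\Sp_{K(h)}$, with dual $DX:=F_{K(h)}(X,L_{K(h)}S^0)$. Because $(E_h)_*X$ is finitely generated free, $(E_h)_*DX\cong\mathrm{Hom}_{E_*}(\Sigma^dE_*,E_*)\cong\Sigma^{-d}E_*$, and the Künneth isomorphism (applicable since $(E_h)_*X$ is pro-free) identifies the evaluation $X\otimes_{K(h)}DX\to L_{K(h)}S^0$ on $(E_h)_*$ with the perfect pairing $\Sigma^dE_*\widehat{\otimes}_{E_*}\Sigma^{-d}E_*\xrightarrow{\ \sim\ }E_*$. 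By Step 1 the evaluation is then an equivalence, and for a dualizable object this is exactly the statement that $X$ is invertible.

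\textbf{Main obstacle.} The real content lies in the two Hovey--Strickland inputs used above — the Künneth isomorphism for $\otimes_{K(h)}$ when one factor has pro-free completed homology, and the implication that $K(h)_*X$ of bounded parity forces $(E_h)_*X$ to be pro-free with residue $K(h)_*X$ — together with the bookkeeping distinguishing $E_h\wedge X$ from its $K(h)$-localization $(E_h)_*X$. I would cite these (from \emph{Morava K-theories and localisation}); the graded-field computation, topological Nakayama, and the duality identification are then formal.
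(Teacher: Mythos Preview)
The paper does not supply its own proof of this statement: it is quoted verbatim as \cite[Theorem~1.3]{HMS_picard} and used as a black box, so there is no in-paper argument to compare against. Your sketch is correct and is essentially the standard proof from the cited literature (Hopkins--Mahowald--Sadofsky, refined by Hovey--Strickland): detect equivalences via $(E_h)_*$, reduce invertibility to $K(h)_*X$ being one-dimensional, use pro-freeness plus topological Nakayama to get $(E_h)_*X\cong\Sigma^d(E_h)_*$, and in the converse direction exploit dualizability of $K(h)$-locally small spectra together with the K\"unneth isomorphism for pro-free completed homology. The only minor comment is that in Step~2 you could bypass $K(h)_*$ entirely: smashing with $E_h$ already gives a K\"unneth isomorphism $(E_h)_*X\,\widehat\otimes_{(E_h)_*}(E_h)_*X^{-1}\cong(E_h)_*$ once one of the factors is pro-free, and invertibility of $X$ forces $K(h)_*X$ to be nonzero and hence $(E_h)_*X$ to be pro-free, after which invertibility of the completed tensor product over a complete local ring gives rank one directly. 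But what you wrote is fine.
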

	As $E_h$ is even periodic, an invertible graded $(E_h)_*$-module is either itself or its suspension. This yields the zeroth detection map:
	\[\ev_0\colon \Pic_{K(h)}\xrightarrow{X\mapsto (E_h)_*(X)} \Pic(\text{graded }{(E_h)_*}\text{-modules})= \Z/2.\]
	\begin{prop}
		$\ev_0$ is a surjective group homomorphism. 
	\end{prop}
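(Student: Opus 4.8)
The plan is to establish the two properties in turn: that $\ev_0$ is multiplicative, and that it is onto; well-definedness is already in hand, since the target $\Pic(\text{graded }(E_h)_*\text{-modules})=\Z/2$ has been identified and $\ev_0$ lands in invertible modules by \Cref{thm: E homology invertible}.

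\textbf{Group homomorphism.} By \Cref{thm: E homology invertible}, for $X\in\Pic_{K(h)}$ the module $(E_h)_*(X)$ is invertible; since $(E_h)_*$ is a graded local ring concentrated in even degrees, such a module is a shift of the free rank-one module, so in particular it is free over $(E_h)_*$. Hence for $X,Y\in\Pic_{K(h)}$ the Künneth comparison map
\[
(E_h)_*(X)\otimes_{(E_h)_*}(E_h)_*(Y)\longrightarrow (E_h)_*\bigl(X\otimes_{K(h)}Y\bigr)
\]
for completed $E_h$-homology is an isomorphism: the $\mathrm{Tor}$ obstructions vanish because one factor is free, and the $\lim^1$ terms entering the completed smash product vanish because the relevant pro-systems are Mittag--Leffler (again by freeness). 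Passing to isomorphism classes of graded $(E_h)_*$-modules turns $\otimes_{(E_h)_*}$ into the group operation on $\Z/2$, so $\ev_0(X\otimes_{K(h)}Y)=\ev_0(X)+\ev_0(Y)$.

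\textbf{Surjectivity.} I would evaluate $\ev_0$ on $L_{K(h)}S^1$. This lies in $\Pic_{K(h)}$ because $S^1$ is invertible in $\Sp$ and $L_{K(h)}$ is symmetric monoidal (equivalently, by \Cref{thm: E homology invertible}, since $(E_h)_*(L_{K(h)}S^1)\cong\Sigma(E_h)_*$ is invertible). Now $\Sigma(E_h)_*$ represents the nontrivial class in $\Pic(\text{graded }(E_h)_*\text{-modules})$: as $(E_h)_*$ is even periodic, the shifts of the free rank-one module are, up to isomorphism, exactly $(E_h)_*$ and $\Sigma(E_h)_*$, and these two are distinct. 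Therefore $\ev_0(L_{K(h)}S^1)=1$, and $\ev_0$ is surjective.

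I do not expect a genuine obstacle. The only technical point worth spelling out is the Künneth isomorphism for \emph{completed} $E_h$-homology, which is precisely where one uses that an invertible $(E_h)_*$-module is free of rank one after an even shift; a slicker alternative is to observe that $X\mapsto (E_h)_*(X)$ is symmetric monoidal on the dualizable objects of $\Sp_{K(h)}$ for the completed smash product, so it automatically restricts to a group homomorphism on Picard groups with no case analysis at all.
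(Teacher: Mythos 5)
Your proposal is correct and follows essentially the same route as the paper, which also checks the homomorphism property via the K\"unneth theorem and gets surjectivity by evaluating on $S^1$. Your elaboration of why the completed K\"unneth map is an isomorphism (freeness of invertible graded $(E_h)_*$-modules) is a valid filling-in of the detail the paper leaves implicit.
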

	\begin{proof}
		We can check $\ev_0$ is a group homomorphism using the K\"{u}nneth theorem. It is surjective since $\ev_0(S^1)=\pi_*(\Sigma E_h)$ is concentrated in odd degrees.
	\end{proof}
	Denote the kernel of $\ev_0$ by $\Pic_{K(h)}^0$. This is the group of invertible $K(h)$-local spectra whose $E_h$-homology is concentrated in even degrees. For any spectrum $X$, its $E_h$-homology is not only a graded $(E_h)_*$-module, but also a \emph{graded $\pi_*(E_h\wedge_{K(h)} E_h)$-comodule}. In the case when $X\in \Pic_{K(h)}^0$, this \emph{graded} comodule structure is determined by $(E_h)_0(X)$ as an \emph{ungraded} $\pi_0(E_h\wedge_{K(h)} E_h)$-comodule. This gives rise to the first detection map:
	\[\ev_1\colon \Pic_{K(h)}^0\xrightarrow{X\mapsto (E_h)_0(X)}\Pic((\pi_0(E_h),\pi_0(E_h\wedge_{K(h)} E_h))\text{-comodules}).\]
	To identify the target of $\ev_1$, we use the following lemma.
	\begin{lem}[{\cite{hovey2004}}]
		There is an isomorphism of Hopf algebroids:
		\[ (\pi_0(E_h),\pi_0(E_h\wedge_{K(h)} E_h))\cong (\pi_0(E_h),\Map_c(\G_h;\pi_0(E_h))), \]
		where $\G_h=\Sbf_h\rtimes\Gal(\F_{p^h}/\F_p)$ and $\Sbf_h$ is the automorphism group of the height-$h$ Honda formal group.
	\end{lem}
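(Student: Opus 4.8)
The plan is to realize both Hopf algebroids as the one co-representing the moduli groupoid of deformations of the Honda formal group $\Gamma_h$, following Morava, Strickland, and Devinatz--Hopkins. Recall the Lubin--Tate interpretation of $\pi_0(E_h)=\W\llb u_1,\dots,u_{h-1}\rrb$: it pro-represents the functor sending a complete local $\W$-algebra $R$ to the set of $\star$-isomorphism classes of deformations of $\Gamma_h/\F_{p^h}$ to $R$. I would show that the affine groupoid scheme $\bigl(\Spf\pi_0(E_h),\,\Spf\pi_0(E_h\wedge E_h)\bigr)$ (with $\pi_0(E_h\wedge E_h)$ understood $K(h)$-locally, i.e.\ as $\pi_0L_{K(h)}(E_h\wedge E_h)$) co-represents the groupoid whose objects over $R$ are such deformations and whose morphisms $(G_1,i_1)\to(G_2,i_2)$ are isomorphisms $f\colon G_1\simto G_2$ over $R$ whose reduction modulo $\mfrak_R$ is $\gamma$-semilinear for some $\gamma\in\Gal(\F_{p^h}/\F_p)$, compatibly with the $i_j$. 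The automorphism scheme over a point is then precisely $\G_h=\Sbf_h\rtimes\Gal(\F_{p^h}/\F_p)$, and co-representing a constant such groupoid produces the ring $\Map_c(\G_h,\pi_0(E_h))$ of continuous functions, equipped with the Hopf-algebroid structure dual to the group law of $\G_h$.

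First I would construct the comparison map. The Goerss--Hopkins--Miller theorem furnishes an action $\rho\colon\G_h\to\Aut_{\einf}(E_h)$; for $g\in\G_h$ the composite $E_h\wedge E_h\xrightarrow{1\wedge\rho(g)}E_h\wedge E_h\xrightarrow{\mathrm{mult}}E_h$ assembles, using the continuity of $\rho$, into a map of $\einf$-$E_h$-algebras $L_{K(h)}(E_h\wedge E_h)\to\Map^c(\G_h,E_h)$, informally $e\otimes e'\mapsto\bigl(g\mapsto e\cdot\rho(g)(e')\bigr)$. Passing to $\pi_0$ and using that $E_h$ is even periodic yields a ring homomorphism $\pi_0(E_h\wedge E_h)\to\Map_c(\G_h,\pi_0(E_h))$ which carries the left unit to the constant functions and the right unit to $r\mapsto(g\mapsto\rho(g)(r))$.

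Next I would prove this homomorphism is an isomorphism. Both sides are pro-free, hence flat, over $\pi_0(E_h)$ through either unit, so by a completeness argument it suffices to check the reduction modulo the invariant maximal ideal $I_h=(p,u_1,\dots,u_{h-1})$, where the claim becomes $\pi_0\bigl((E_h/I_h)\wedge E_h\bigr)\cong\Map_c(\G_h,E_h/I_h)$; this is Morava's description of the automorphism groupoid of $\Gamma_h$ over $\F_p$, equivalently Morava's change-of-rings identification of $v_h^{-1}BP_*BP/I_h$ with continuous functions on the stabilizer group. Finally one checks the isomorphism respects the Hopf-algebroid structure: the comultiplication induced by $1\wedge\eta\wedge1\colon E_h\wedge E_h\to E_h\wedge E_h\wedge E_h$ corresponds to the map dual to multiplication $\G_h\times\G_h\to\G_h$, the counit to evaluation at the identity, and the conjugation (antipode) to inversion $g\mapsto g^{-1}$; these amount to diagram chases using that $\rho$ is a homomorphism and that $\wedge$ is symmetric monoidal.

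The main obstacle is the isomorphism claim of the previous paragraph, which is the substance of \cite{hovey2004} (see also work of Strickland and of Devinatz--Hopkins): it reduces to Morava's theorem over the residue field, together with the flatness-and-completeness bookkeeping needed to lift from $E_h/I_h$ back to all of $\pi_0(E_h)$. Since this identification is by now standard in chromatic homotopy theory, for the purposes of this paper I would simply invoke it rather than reprove it.
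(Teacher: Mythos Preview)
The paper does not prove this lemma at all: it is stated with a bare citation to \cite{hovey2004} and used as a black box. Your proposal correctly sketches the standard argument behind that citation (construct the comparison map from the Goerss--Hopkins--Miller action, reduce modulo $I_h$ to Morava's theorem, and lift by flatness and completeness), and your concluding sentence---that one should simply invoke the result rather than reprove it---is exactly what the paper does. So there is no discrepancy; if anything, you have supplied more detail than the paper.
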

	It follows that a $\pi_0(E_h\wedge_{K(h)} E_h)$-comodule $M$ is equivalent to a $\pi_0(E_h)$-module together with a \emph{continuous} $\G_h$-action such that the following diagram commutes for all $g\in \G_h$: (\cite[page 118]{HMS_picard})
	\begin{equation*}
		\begin{tikzcd}
			\pi_0(E_h)\otimes M\rar["g\otimes g"]\dar& \pi_0(E_h)\otimes M\dar\\
			M\rar["g"]&M
		\end{tikzcd}
	\end{equation*}
	The Picard group of such $\G_h$-$\pi_0(E_h)$-modules is computed by a continuous group cohomology of $\G_h$:
	\begin{prop}[{\cite[Proposition 8.4]{HMS_picard}}]\label{prop:Pic_alg}
		\[\Pic(\text{continuous }\G_h\text{-}\pi_0(E_h)\text{-modules})\cong H^1_c(\G_h;\pi_0(E_h)^\x).\]
	\end{prop}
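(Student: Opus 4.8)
The plan is to run the classical \emph{twisted form} argument: an invertible $\G_h$-module over $R := \pi_0(E_h)$ is, after forgetting the action, a free rank-one $R$-module, and trivializing it turns the equivariant structure into a continuous $1$-cocycle valued in $R^\x$, well-defined up to coboundary. The input I would rely on is that, by the identification of the Hopf algebroid already recorded in the excerpt, a $\pi_0(E_h\wedge E_h)$-comodule is the same as an $R$-module with a continuous $\G_h$-action making the displayed square commute --- i.e.\ a continuous $R$-semilinear action, $g(am)=g(a)\,g(m)$ --- together with the fact that $R\cong \W\llb u_1,\ldots,u_{h-1}\rrb$ is a complete regular local ring, so $\Pic(R)=0$ and every invertible $R$-module is free of rank one.

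First I would take an invertible $\G_h$-$R$-module $M$; forgetting the action it is free of rank one, so I can pick an $R$-linear isomorphism $\varphi\colon R\simto M$ and transport the action through it. For each $g\in\G_h$ the map $x\mapsto \varphi^{-1}(g\cdot\varphi(x))$ is $g$-semilinear, hence of the form $x\mapsto g(x)\,c(g)$ with $c(g):=\varphi^{-1}(g\cdot\varphi(1))\in R$; invertibility of the $g$-action forces $c(g)\in R^\x$. Unwinding $(g_1g_2)\cdot m = g_1\cdot(g_2\cdot m)$ gives the cocycle identity $c(g_1g_2)=g_1\!\bigl(c(g_2)\bigr)\,c(g_1)$, and continuity of the action is precisely continuity of $g\mapsto c(g)$, so $c\in Z^1_c(\G_h;R^\x)$.

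Next I would show $M\mapsto[c]$ is a well-defined bijection. Changing the trivialization to $\varphi\cdot u$ with $u\in R^\x$ replaces $c$ by the cohomologous cocycle $g\mapsto g(u)\,u^{-1}\,c(g)$, and an equivariant $R$-isomorphism $M\to M'$, read through chosen trivializations, is multiplication by a unit and likewise alters $c$ by a coboundary; conversely any $c\in Z^1_c(\G_h;R^\x)$ defines an invertible $\G_h$-$R$-module $R_c$ (underlying module $R$, action $g\cdot x:=g(x)\,c(g)$), and these exhaust all classes. Finally the cocycle attached to $M\otimes_R M'$ is the pointwise product $c\,c'$, which is the sum in $H^1_c(\G_h;R^\x)$, so the bijection is a group isomorphism $\Pic(\G_h\text{-}R\text{-modules})\simto H^1_c(\G_h;R^\x)$.

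The purely algebraic skeleton above is Hilbert-90-style and routine; the step I expect to require the most care is the continuity bookkeeping: checking that the comodule structure over the profinite Hopf algebroid $(R,\Map_c(\G_h,R))$ really unwinds to a \emph{continuous} semilinear $\G_h$-action, that the cocycle extracted from a trivialization is continuous (and that every continuous cocycle arises this way), and that $H^1_c$ computed with continuous cochains is the correct target throughout rather than the abstract group cohomology.
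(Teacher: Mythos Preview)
Your argument is correct and is exactly the standard twisted-form/Hilbert~90 argument; note that the paper does not actually prove this proposition but simply cites \cite[Proposition~8.4]{HMS_picard}, whose proof is the one you have written down. The only nuance, as you already flag, is the continuity bookkeeping translating between comodules over $\Map_c(\G_h,\pi_0(E_h))$ and continuous semilinear $\G_h$-actions, but this is handled in the cited reference and in \cite{hovey2004}.
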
	
	As a result, the first detection map is a group homomorphism:
	\begin{equation}\label{eqn:ev1}
		\ev_1\colon \Pic_{K(h)}^0\to H^1_c(\G_h;\pi_0(E_h)^\x).
	\end{equation}
	\begin{defn}
		The Picard group of \emph{graded} $\G_h$-$(E_h)_*$-modules is called the \textbf{algebraic $K(h)$-local Picard group}, denoted by $\Pic^{alg}_{K(h)}$. The Picard group of \emph{ungraded} $\G_h$-$\pi_0(E_h)$-modules is denoted by $\Pic^{alg,0}_{K(h)}$.
	\end{defn}
	Thus, by \Cref{prop:Pic_alg}, we have
	\[
	\Pic^{alg,0}_{K(h)}=H^1_c(\G_h;\pi_0(E_h)^\x).
	\]
	The first detection map $\ev_1$ then extends to the full Picard group $\Pic_{K(h)}$, which we will also denote by $\ev_1$.
	\begin{prop}
		The $K(h)$-local Picard groups we have introduced so far are related by a map of short exact sequences: 
		\[\begin{tikzcd}
			0\rar&\Pic^{0}_{K(h)}\rar\dar["{\ev_1}"]&\Pic_{K(h)}\rar\dar["\ev_1"]&\Z/2\dar[equal]\rar&0\\
			0\rar&\Pic^{alg,0}_{K(h)}\rar&\Pic^{alg}_{K(h)}\rar&\Z/2\rar&0
		\end{tikzcd}\]
	\end{prop}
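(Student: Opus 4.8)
The plan is to produce the claimed map of short exact sequences by assembling the data already constructed in this section, using the filtration of $\Pic_{K(h)}$ by $\ev_0$ and $\ev_1$. The bottom row is the defining split short exact sequence relating the ungraded and graded algebraic Picard groups: a graded invertible $\G_h$-$(E_h)_*$-module restricts to an ungraded invertible $\G_h$-$\pi_0(E_h)$-module via the degree-zero part, the suspension $\Sigma E_h$ provides a class mapping to the generator of $\Z/2$, and the Künneth/evenness argument used for $\ev_0$ shows the sequence is exact (indeed split by $n\mapsto \Sigma^n(-)$ through $\Z/2$). So the content is really the commutativity of the two squares and the exactness of the top row.

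First I would establish exactness of the top row. By definition $\Pic^0_{K(h)}=\ker(\ev_0)$, so $0\to \Pic^0_{K(h)}\to \Pic_{K(h)}\xrightarrow{\ev_0}\Z/2$ is exact; surjectivity of $\ev_0$ onto $\Z/2$ was already proved (it is hit by $S^1$), so the top row is a genuine short exact sequence. Next, the right-hand square: chasing $S^1\in\Pic_{K(h)}$, $\ev_1(S^1)$ is the class of $(E_h)_0(S^1)=\pi_0(\Sigma E_h)$, which as a graded object sits in odd degree and hence maps to the generator of $\Z/2$ in $\Pic^{alg}_{K(h)}$; more structurally, the extended $\ev_1$ on all of $\Pic_{K(h)}$ sends $X$ to the class of its \emph{graded} comodule $(E_h)_*(X)$, and the projection $\Pic^{alg}_{K(h)}\to\Z/2$ is exactly $\ev_0$, so the square commutes on the nose. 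For the left-hand square: on $\Pic^0_{K(h)}$ the graded comodule $(E_h)_*(X)$ is concentrated in even degrees and, as recalled in the excerpt, is determined by the ungraded $\pi_0(E_h\wedge E_h)$-comodule $(E_h)_0(X)$; hence the image of the class of $(E_h)_*(X)$ in $\Pic^{alg,0}_{K(h)}$ under the restriction map of the bottom row agrees with $\ev_1$ computed as $X\mapsto(E_h)_0(X)\in H^1_c(\G_h;\pi_0(E_h)^\x)$ via \Cref{prop:Pic_alg}. This is exactly the commutativity of the left square.

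The one point requiring a little care — and the step I expect to be the main (mild) obstacle — is checking that the extension of $\ev_1$ from $\Pic^0_{K(h)}$ to all of $\Pic_{K(h)}$ is well defined as a \emph{group homomorphism} compatible with both the graded structure and the $\Z/2$-grading, i.e. that the ``graded'' detection map really does fit into the right-hand square and restricts to the ungraded $\ev_1$ on the kernel. This is handled by observing that for any $X\in\Pic_{K(h)}$, $(E_h)_*(X)$ is an invertible graded $\pi_0(E_h\wedge E_h)$-comodule by \Cref{thm: E homology invertible} together with the Hopf algebroid identification, that the assignment $X\mapsto [(E_h)_*(X)]$ is multiplicative by the Künneth theorem, and that forgetting the comodule structure down to the graded module recovers $\ev_0$; the claimed diagram then follows by naturality of all these constructions. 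No genuinely hard input is needed beyond what the excerpt already records; the proof is a diagram chase gluing the pieces together.
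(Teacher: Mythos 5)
Your argument for the proposition itself is correct and is essentially the paper's (implicit) argument: the paper states this proposition without proof, treating it as a diagram chase from the definitions of $\ev_0$, $\ev_1$, and the graded/ungraded algebraic Picard groups, which is exactly what you carry out (exactness of the top row from $\Pic^0_{K(h)}=\ker\ev_0$ and surjectivity of $\ev_0$, exactness of the bottom row from the even/odd dichotomy for invertible graded $(E_h)_*$-modules, and commutativity of the two squares by inspection). However, your parenthetical claim that the bottom row is \emph{split} by $n\mapsto\Sigma^n(-)$ factoring through $\Z/2$ is false: such a splitting would require $[\Sigma^2(E_h)_*]$ to be trivial in $\Pic^{alg}_{K(h)}$, i.e.\ that the class of $\pi_2(E_h)=\omega$ in $H^1_c(\G_h;\pi_0(E_h)^\x)$ vanish, which fails in general because the periodicity generator $u$ is not $\G_h$-invariant; the remark immediately following the proposition records that these sequences do \emph{not} split at height $1$ for all primes and at height $2$ for $p\ge 3$. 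Since the proposition only asserts a map of short exact sequences and never uses a splitting, this error does not affect the validity of your proof of the stated claim, but the aside should be removed.
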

	\begin{rem}
		It is known that the short exact sequences do not split at height $h=1$ for all primes \cite{HMS_picard}, and at height $2$ for $p\ge 3$ \cite{GHMR_Picard}.
	\end{rem}
	\begin{cor}\label{cor:ev1_ker}
		The two $\ev_1$ maps in the diagram above have isomorphic kernels and cokernels.
	\end{cor}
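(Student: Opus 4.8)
The statement to prove is \Cref{cor:ev1_ker}: the two $\ev_1$ maps in the displayed map of short exact sequences have isomorphic kernels and cokernels.

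The plan is to apply the snake lemma (or rather, a mild variant of it suited to short exact sequences of abelian groups fitting into a commutative ladder) to the given commutative diagram of two short exact sequences. The key observation is that the rightmost vertical arrow is the identity map $\Z/2 \xrightarrow{=} \Z/2$, hence is an isomorphism, so both its kernel and cokernel vanish.

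First, I would recall that we have a commutative diagram with exact rows
\begin{equation*}
\begin{tikzcd}
0\rar&\Pic^{0}_{K(h)}\rar\dar["{\ev_1}"]&\Pic_{K(h)}\rar\dar["\ev_1"]&\Z/2\dar[equal]\rar&0\\
0\rar&\Pic^{alg,0}_{K(h)}\rar&\Pic^{alg}_{K(h)}\rar&\Z/2\rar&0
\end{tikzcd}
\end{equation*}
in the category of abelian groups, where the right vertical map is the identity. Next, I would invoke the snake lemma for this configuration, which produces a six-term exact sequence
\begin{equation*}
0\to \ker(\ev_1^0)\to \ker(\ev_1)\to \ker(\mathrm{id})\to \coker(\ev_1^0)\to \coker(\ev_1)\to \coker(\mathrm{id})\to 0,
\end{equation*}
where I write $\ev_1^0$ for the left vertical map $\Pic^0_{K(h)}\to \Pic^{alg,0}_{K(h)}$ and $\ev_1$ for the middle vertical map on $\Pic_{K(h)}$. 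Since $\ker(\mathrm{id})=\coker(\mathrm{id})=0$, this exact sequence degenerates into
\begin{equation*}
0\to \ker(\ev_1^0)\to \ker(\ev_1)\to 0 \quad\text{and}\quad 0\to \coker(\ev_1^0)\to \coker(\ev_1)\to 0,
\end{equation*}
i.e. $\ker(\ev_1^0)\xrightarrow{\sim}\ker(\ev_1)$ and $\coker(\ev_1^0)\xrightarrow{\sim}\coker(\ev_1)$, which is precisely the claim. I should note that the connecting homomorphism $\ker(\mathrm{id})\to \coker(\ev_1^0)$ is automatically zero since its source is zero, so no extra care is needed there.

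There is essentially no obstacle here: this is a formal diagram chase, and the only thing to double-check is that the rows are genuinely short exact and the square commutes, both of which are part of the preceding proposition. One minor point worth a sentence is to make sure the maps in the top row — the inclusion $\Pic^0_{K(h)}\hookrightarrow \Pic_{K(h)}$ and the composite $\Pic_{K(h)}\xrightarrow{\ev_0}\Z/2$ — are the ones making the left-hand square commute with the extension $\ev_1$ of the detection map to all of $\Pic_{K(h)}$; this compatibility is exactly how $\ev_1$ on $\Pic_{K(h)}$ was constructed, so it holds by definition. Hence the corollary follows immediately.
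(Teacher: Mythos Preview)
Your proof is correct and is exactly the intended argument: the paper states the corollary without proof, leaving the snake lemma application implicit, and you have spelled it out cleanly.
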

	This corollary justifies the usage of $\ev_1$ for both detection maps.
	
	\subsection{Exotic Picard groups}
	Now the question turns to whether $\ev_1$ is injective or surjective. The surjectivity problem is hard and involves obstruction theory. In certain cases, we can show $\ev_1$ is injective.
	\begin{defn}\label{defn:exotic}
		The \textbf{exotic $K(h)$-local Picard group} $\kappa_h$ is the kernel of $\ev_1$ in \eqref{eqn:ev1}.
	\end{defn}
	\begin{thm}[\mbox{\cite[Proposition 7.5]{HMS_picard}}] \label{thm:h2<2p-1}
		The exotic Picard group $\kappa_h$ vanishes when $(p-1)\nmid h$ and $2p-1>h^2$.
	\end{thm}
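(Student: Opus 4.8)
The plan is to follow the Hopkins--Mahowald--Sadofsky strategy of analyzing the homotopy fixed point spectral sequence (equivalently, the descent/$K(h)$-local $E_h$-Adams spectral sequence) computing $\pi_*L_{K(h)}S^0$, and to compare it with the corresponding spectral sequence for an invertible $X \in \kappa_h$. Concretely, an element $X \in \kappa_h$ has $(E_h)_*X$ isomorphic to $(E_h)_*$ as a graded $\G_h$-$(E_h)_*$-module (this is precisely what it means to lie in $\ker\ev_1$, using \Cref{cor:ev1_ker} to pass between the graded and ungraded versions). Therefore the $E_2$-page of the $K(h)$-local $E_h$-based spectral sequence for $X$ agrees with that for $S^0$, namely $E_2^{s,t} = H^s_c(\G_h;\pi_t E_h)$, and the two spectral sequences are moreover compatible with the module structure over the one for $S^0$. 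The first potential difference between $X$ and $S^0$ occurs in the filtration-by-filtration comparison of their Postnikov/cellular structure, and is governed by the first possible differential or extension that could distinguish an invertible spectrum from the sphere.

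The key numerical input is a vanishing line. Since $(p-1)\nmid h$, Lazard's theorem (\Cref{thm:lazard}, cited later) gives $\cd_p\Sbf_h = h^2$, hence $\cd_p\G_h = h^2$ as well, so $H^s_c(\G_h;M) = 0$ for $s > h^2$ and any (continuous, $p$-complete) module $M$. On the other hand, the obstruction to $X$ being equivalent to $S^0$ — after one has trivialized it through the appropriate range using $\ev_1$ — lives in a group of the form $H^{s}_c(\G_h;\pi_{t}E_h)$ with $t - s = -1$ (it is an obstruction to lifting an equivalence one more stage, so it sits in $\pi_{-1}$ of a mapping spectrum) and with $s$ as small as $s = 2p-1$, since below that filtration the relevant cohomology agrees with that of the sphere and carries no obstruction. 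The hypothesis $2p - 1 > h^2$ forces $s = 2p-1 > h^2 = \cd_p\G_h$, so every group in which an obstruction could live vanishes, and one concludes inductively that $X \simeq S^0$ in $\Sp_{K(h)}$, i.e. $\kappa_h = 0$. The bookkeeping that $2p-1$ is exactly the first filtration where $H^*_c(\G_h;\pi_*E_h)$ can differ from the sphere's is the content of the original HMS argument (it comes from the sparseness of $H^*_c(\G_h;\pi_*E_h)$ in low filtrations together with the periodicity $\pi_tE_h \cong \pi_{t+2}E_h$ and the action of the center $\Zpx$), and I would cite \cite[Proposition 7.5]{HMS_picard} for this.

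The main obstacle — and the part that requires genuine care rather than a citation — is establishing that the first filtration in which the spectral sequence for a general $X \in \kappa_h$ can deviate from that of $S^0$ is indeed $\geq 2p-1$, i.e. pinning down the precise sparseness and connecting it to the hypothesis on $2p-1$; once that is in hand, the cohomological dimension bound does the rest essentially formally. In the setting of this paper one has the stronger input of the Goerss--Henn--Mahowald--Rezk detection map $\ev_2\colon \kappa_h \to H^{2p-1}_c(\G_h;\pi_{2p-2}E_h)$, whose injectivity under the hypotheses $(p-1)\nmid h$ and (a condition slightly weaker than) $2p-1 > h^2$ is established in \Cref{prop:ev_inj}; granting that, the proof collapses to the single observation that the target group $H^{2p-1}_c(\G_h;\pi_{2p-2}E_h)$ has $2p-1 > h^2 = \cd_p\G_h$ and is therefore zero, whence $\kappa_h = \ker\ev_2 = 0$. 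I would present the argument in this streamlined form, deferring the detection-map injectivity to \Cref{prop:ev_inj} and invoking Lazard's theorem for the vanishing.
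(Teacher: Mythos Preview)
Your core argument is correct and matches the paper's approach: sparseness (\Cref{lem:sparseness}) forces the first possible nonzero differential in the HFPSS for $X$ to be $d_{2p-1}$, and since $2p-1 > h^2 = \cd_p \G_h$ (\Cref{lem:cd}), the target of this and every subsequent differential lies above the horizontal vanishing line, so the spectral sequence collapses at $E_2$. A unit in $E_2^{0,0}(X) = \Zp$ is then a permanent cycle, producing a map $S^0_{K(h)} \to X$ which induces an isomorphism on $E_2$-pages and is therefore a weak equivalence by Boardman's comparison theorem. The paper presents exactly this argument, somewhat more crisply than your obstruction-theoretic framing in the second paragraph.

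However, your proposed ``streamlined'' presentation---deferring the work to \Cref{prop:ev_inj} and then observing that its target $H^{2p-1}_c(\G_h;\pi_{2p-2}E_h)$ vanishes---creates a circularity as the paper is organized. Both the construction of $\ev_2$ (\Cref{con:exotic_hom}) and \Cref{prop:ev_inj} appear \emph{after} \Cref{thm:h2<2p-1}, and the proof of \Cref{prop:ev_inj} explicitly says ``the rest of the proof is identical to that of \Cref{thm:h2<2p-1}'' for the step of turning a permanent cycle into an equivalence. You should present the direct argument of your first two paragraphs and let \Cref{prop:ev_inj} be the generalization that builds on it, not the other way around.
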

	The detection of elements in $\kappa_h$ lies in the \textbf{homotopy fixed point spectral sequence} (HFPSS) to compute the $\pi_*(X)$ for $X\in\Sp_{K(h)}$:
	\begin{equation}\label{eqn:HFPSS_X}
		E_2^{s,t}=H_c^s(\G_h;(E_h)_t(X))\Longrightarrow \pi_{t-s}\left(X\right).
	\end{equation}
	For any $X\in \kappa_h$, the $E_2$-page of the HFPSS to compute its homotopy groups is isomorphic to as that for $S^0_{K(h)}$. The potential differences between the two spectral sequences are the higher differentials. We will show that the higher differentials are necessarily zero under the assumption $2p-1>h^2$ and $(p-1)\nmid h$. To see this, we need the following basic facts about the HFPSS: 
	\begin{lem}[\mbox{\cite[Lemma 1.32]{Bobkova-Goerss},\cite[Page 12]{Goerss-Hopkins}}]
		For any $\G_h$-$\pi_0(E_h)$-module $M$, we have an isomorphism $H^s_c(\G_h;M)\cong H^s_c(\Sbf_h;M)^{\Gal}$.
	\end{lem}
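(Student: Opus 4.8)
The plan is to deduce the isomorphism from the Lyndon--Hochschild--Serre (LHS) spectral sequence for the split extension
\[
1 \longrightarrow \Sbf_h \longrightarrow \G_h \longrightarrow \Gal(\F_{p^h}/\F_p) \longrightarrow 1,
\]
by showing that the finite quotient $\Gal(\F_{p^h}/\F_p)$ has no higher continuous cohomology on the coefficient modules that occur. First I would record the continuous LHS spectral sequence
\[
E_2^{i,j} = H^i_c\!\left(\Gal(\F_{p^h}/\F_p);\, H^j_c(\Sbf_h; M)\right) \Longrightarrow H^{i+j}_c(\G_h; M),
\]
valid in the (pro-)category of continuous $\pi_0(E_h)$-modules in which we work. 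The edge homomorphism $H^s_c(\G_h;M)\to E^{0,s}_\infty \hookrightarrow E^{0,s}_2 = H^s_c(\Sbf_h;M)^{\Gal}$ is precisely the map whose invertibility we want, so it suffices to prove that the spectral sequence is concentrated on the column $i=0$, i.e. that $H^i_c(\Gal(\F_{p^h}/\F_p); N)=0$ for all $i>0$ whenever $N=H^j_c(\Sbf_h;M)$.

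The key point is that such an $N$ is not merely a $\Gal(\F_{p^h}/\F_p)$-module. Since $\pi_0(E_h)$ is a $\W$-algebra and the inclusion $\W\hookrightarrow\pi_0(E_h)$ is $\Gal(\F_{p^h}/\F_p)$-equivariant, $M$ is a $\W$-module with $\W$-semilinear $\Gal(\F_{p^h}/\F_p)$-action, and this extra structure is inherited by the complex of continuous cochains computing $H^*_c(\Sbf_h;M)$, hence by $N$. Now $\W/\Z_p$ is a finite unramified — in particular tamely ramified — extension, so by Noether's theorem it admits a normal integral basis: $\W\cong\Z_p[\Gal(\F_{p^h}/\F_p)]$ as $\Z_p[\Gal(\F_{p^h}/\F_p)]$-modules. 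Galois descent along the faithfully flat étale map $\Z_p\to\W$ identifies $\W$-modules with semilinear $\Gal(\F_{p^h}/\F_p)$-action with $\Z_p$-modules, giving $N\cong N^{\Gal}\otimes_{\Z_p}\W\cong N^{\Gal}\otimes_{\Z_p}\Z_p[\Gal(\F_{p^h}/\F_p)]$. Thus $N$ is a coinduced $\Gal(\F_{p^h}/\F_p)$-module, and Shapiro's lemma yields $H^i_c(\Gal(\F_{p^h}/\F_p);N)=0$ for $i>0$. (When $p\nmid h$ this vanishing is immediate, since $|\Gal(\F_{p^h}/\F_p)|=h$ is then a unit acting on the $p$-complete module $N$; the normal-basis argument is what covers the general case.) Feeding this back in collapses the spectral sequence onto $i=0$, so the edge map is the asserted isomorphism $H^s_c(\G_h;M)\simeq H^s_c(\Sbf_h;M)^{\Gal}$.

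The main obstacle is bookkeeping rather than mathematics: one must pin down the precise module category for the continuous cohomology of the profinite group $\G_h$ (profinite, or $L$-complete, $\pi_0(E_h)$-modules), verify that the continuous LHS spectral sequence genuinely exists there, and check that the functor $-\otimes_{\Z_p}\W$ is exact in that setting and compatible with the completions being used, and that the $\W$-semilinear $\Gal(\F_{p^h}/\F_p)$-structure really does pass through continuous $\Sbf_h$-cohomology. Once these standard points are in place, the degeneration via a normal integral basis is purely formal.
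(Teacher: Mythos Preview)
The paper does not actually prove this lemma; it simply cites it from \cite[Page 12]{Goerss-Hopkins}. Your argument is the standard one and is correct: the LHS spectral sequence for $1\to\Sbf_h\to\G_h\to\Gal\to 1$ degenerates because every $H^j_c(\Sbf_h;M)$ is a $\W$-module with $\W$-semilinear $\Gal$-action, hence coinduced by the normal integral basis for $\W/\Z_p$, so its higher $\Gal$-cohomology vanishes. This is exactly the reasoning one finds in the cited reference, so there is no genuine difference in approach to report.
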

	\begin{lem}[Sparseness, \mbox{\cite[Remark 1.4]{Goerss-Hopkins}}]\label{lem:sparseness}
		The continuous group cohomology $H_c^s(\Sbf_h;\pi_t(E_h))$ is zero unless $2(p-1)$ divides $t$.
	\end{lem}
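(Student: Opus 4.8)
The plan is to reduce the vanishing of $H^s_c(\Sbf_h;\pi_t(E_h))$ outside of degrees $t$ with $2(p-1)\mid t$ to a fixed-point argument using the action of a suitable central torsion subgroup of $\Sbf_h$. First I would recall that $\Sbf_h$ is the unit group of the maximal order $\mathcal{O}_h$ in the central division algebra $D_h$ of Hasse invariant $1/h$ over $\Qp$, so its center is $\Zpx$, and inside $\Zpx$ sits the cyclic group $\mu=\F_p^\times$ of $(p-1)$-st roots of unity (the Teichm\"uller lifts). Since $(p-1)$ is coprime to $p$ and $\cd_p(\Sbf_h)<\infty$ away from these roots of unity, the key point is that $\mu$ is a finite group of order prime to $p$, so taking $\mu$-invariants is exact on $\Zp$-modules (equivalently $H^{>0}(\mu;-)=0$ with $\Zp$-coefficients, since $|\mu|$ is a unit).

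The main computational step is to pin down the action of $\mu\subset\Zpx$ on $\pi_t(E_h)$. Here $\pi_*(E_h)=\W\llb u_1,\dots,u_{h-1}\rrb[u^{\pm 1}]$ with $|u|=-2$, and an element $a\in\Zpx$, acting through the center, multiplies the periodicity class $u$ by $a$ (up to the standard normalization: the central element $a$ acts on the formal group by the $[a]$-series, hence on the invariant differential, hence on $u$, by multiplication by $a$); the coordinates $u_i$ are fixed. Concretely, on $\pi_{2m}(E_h)$ (the $u^{-m}$-component tensored with the power series ring) a central unit $a$ acts by $a^{m}$. Therefore a generator $\zeta$ of $\mu$ acts on $\pi_t(E_h)$, $t=2m$, by multiplication by $\zeta^{m}$, which is the identity precisely when $(p-1)\mid m$, i.e. when $2(p-1)\mid t$; if $2(p-1)\nmid t$ then $\zeta^m\neq 1$ and $(\zeta^m-1)$ is a unit in $\W$, so $\pi_t(E_h)^\mu=0$ and more generally $\pi_t(E_h)$ is a module on which $\mu$ acts with no invariants.

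To conclude, I would run the Lyndon--Hochschild--Serre spectral sequence for the central extension $1\to\mu\to\Sbf_h\to\Sbf_h/\mu\to 1$ (or, more simply, use that $\mu$ is central so $H^0(\mu;\pi_t(E_h))=\pi_t(E_h)^\mu$ carries the residual $\Sbf_h/\mu$-action and $H^{>0}(\mu;\pi_t(E_h))=0$ because $|\mu|$ is invertible in $\Zp$), giving $H^s_c(\Sbf_h;\pi_t(E_h))\cong H^s_c(\Sbf_h/\mu;\pi_t(E_h)^\mu)$. When $2(p-1)\nmid t$ the coefficient module $\pi_t(E_h)^\mu$ vanishes, hence so does $H^s_c(\Sbf_h;\pi_t(E_h))$ for every $s$. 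The main obstacle is the normalization bookkeeping in the previous paragraph: one must be careful that the central $\Zpx$ really does act on $u$ by the scalar claimed (this is where the identification of $\pi_*E_h$ via the Lubin--Tate deformation and the formula for the action on the invariant differential enter), and that the continuous cohomology machinery applies to the profinite group $\Sbf_h$ with these profinite coefficient modules; both are standard but worth stating precisely, e.g. by citing Lazard and the usual description of the Morava stabilizer action.
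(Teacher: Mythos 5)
Your argument is correct and is the standard proof of sparseness; the paper gives no proof of its own, citing Ravenel's Theorem 6.2.10, whose mechanism (the internal grading of the Morava stabilizer algebra by multiples of $2(p-1)$, coming from the degrees $2(p^i-1)$ of the generators of the cobar complex) is exactly the Hopf-algebra incarnation of the central $\F_p^\times\subset\Zpx\subset\Sbf_h$ action you exploit. The only points to nail down are the ones you already flag: a central $a$ fixes $\pi_0(E_h)$ and scales $\pi_{2m}(E_h)$ by $a^{\pm m}$, and $H^{>0}_c(\mu;-)=0$ because $|\mu|=p-1$ is invertible in $\Zp$, so the reduction $H^s_c(\Sbf_h;M)\cong H^s_c(\Sbf_h/\mu;M^{\mu})$ with $M^{\mu}=0$ for $2(p-1)\nmid t$ goes through as you describe.
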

	
	\begin{lem}[Horizontal vanishing line, \mbox{\cite[Proposition 1.6]{Goerss-Hopkins}}]\label{lem:cd}
		The $p$-adic Lie group $\mbf{S}_h$ has cohomological dimension $h^2$ if $(p-1)\nmid h$.
	\end{lem}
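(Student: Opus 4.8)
The plan is to reduce the claim to Lazard's theorem on the cohomological dimension of compact $p$-adic analytic groups (see \Cref{thm:lazard}); what must be fed into it is that $\Sbf_h$ has dimension $h^2$ as a $p$-adic analytic group and that $\Sbf_h$ contains no element of order $p$ once $(p-1)\nmid h$. Recall that $\Sbf_h\cong\mc{O}_D^\x$, the unit group of the maximal order $\mc{O}_D$ in the $\Qp$-central division algebra $D$ of degree $h$; its Lie algebra is $D$, so $\dim\Sbf_h=\dim_{\Qp}D=h^2$. It is filtered by the open normal subgroups $S^{(i)}:=1+\mfrak_D^{\,i}$, where $\mfrak_D\trianglelefteq\mc{O}_D$ is the maximal ideal: the subgroup $S^{(1)}$ is pro-$p$ (its successive quotients $S^{(i)}/S^{(i+1)}$ are elementary abelian $p$-groups) and $\Sbf_h/S^{(1)}\cong\F_{p^h}^\x$ has order $p^h-1$, which is prime to $p$.

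First I would check that $\Sbf_h$ is $p$-torsion-free when $(p-1)\nmid h$. An element $g\in\Sbf_h$ of order $p$ would be a primitive $p$-th root of unity, hence would generate a subfield $\Qp(g)\cong\Qp(\zeta_p)$ of $D$; since every subfield of $D$ has degree over $\Qp$ dividing $h$, this forces $p-1=[\Qp(\zeta_p):\Qp]\mid h$, contradicting the hypothesis. Therefore $\Sbf_h$, and a fortiori its closed subgroup $S^{(1)}$, has no $p$-torsion.

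Next I would apply Lazard's theorem to the pro-$p$ group $S^{(1)}$: being of finite rank and $p$-torsion-free, it is a Poincar\'e duality group at $p$ whose dimension coincides with its dimension as a $p$-adic manifold, namely $h^2$; in particular $\cd_p(S^{(1)})=h^2$. Concretely, one can pass to an open uniformly powerful subgroup $U\le S^{(1)}$, for which $H^*_c(U;\F_p)\cong\Lambda^\bullet_{\F_p}(\F_p^{h^2})$ and hence $\cd_p(U)=h^2$, and observe that $\cd_p$ does not change under the passage between $U$ and $S^{(1)}$ since both are torsion-free. Finally, since $[\Sbf_h:S^{(1)}]=p^h-1$ is prime to $p$, the composite $\mathrm{cor}\circ\mathrm{res}$ acts as multiplication by a unit on $H^*_c(\Sbf_h;M)$ for every $p$-torsion coefficient module $M$, so $\cd_p(\Sbf_h)=\cd_p(S^{(1)})=h^2$. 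Applied to the profinite $p$-complete modules $\pi_t(E_h)$, this yields the horizontal vanishing line $H^s_c(\Sbf_h;\pi_t(E_h))=0$ for $s>h^2$ used in \eqref{eqn:HFPSS_X}.

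The only genuine point requiring care is the bookkeeping of coefficients: one must work throughout with continuous cohomology and the appropriate category of profinite (or $p$-complete) modules, and invoke the version of Lazard's theorem that delivers simultaneously the upper bound $\cd_p\le h^2$ (from finite rank) and the exact value $\cd_p=h^2$ (from torsion-freeness). The index computation, the restriction--corestriction argument, and the exterior-algebra cohomology of a uniform pro-$p$ group are all standard once the absence of $p$-torsion is in hand.
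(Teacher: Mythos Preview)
Your proposal is correct and follows the same line of reasoning the paper indicates: the paper does not give a self-contained proof of \Cref{lem:cd} but cites \cite[Proposition 1.6]{Goerss-Hopkins} and, in \S\ref{subsec:strategy}, remarks that it follows from Lazard's theorem (\Cref{thm:lazard}) together with the fact that $\Sbf_h$ has no finite $p$-subgroup. You have simply unpacked these two ingredients in detail---the identification $\Sbf_h\cong\mc{O}_D^\x$, the subfield argument ruling out $p$-torsion when $(p-1)\nmid h$, and the restriction--corestriction transfer from the pro-$p$ subgroup $S^{(1)}$---which is exactly what a reader would do if asked to reconstruct the cited proof.
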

	
	It follows that the HFPSS \eqref{eqn:HFPSS_X} has a horizontal vanishing line at $s=h^2$ when $(p-1)\nmid h$.
	\begin{lem}[$0$-line, {\cite[Lemma 1.33]{Bobkova-Goerss}}]\label{lem:HPFSS_s=0}
		$H_c^0(\G_h;\pi_t(E_h))=\left\{\begin{array}{cl}
			\Zp,& t=0;\\
			0,&\textup{otherwise.}
		\end{array}\right.$
	\end{lem}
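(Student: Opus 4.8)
The final statement to prove is:
\begin{lem*}[$0$-line]
$H_c^0(\G_h;\pi_t(E_h))=\begin{cases}\Zp,& t=0;\\ 0,&\textup{otherwise.}\end{cases}$
\end{lem*}

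\textbf{Plan of proof.} The plan is to identify $H^0_c(\G_h;\pi_t(E_h))$ with the invariants $\pi_t(E_h)^{\G_h}$ and to compute these invariants directly from the known action of $\G_h$ on $(E_h)_* = \W\llb u_1,\dots,u_{h-1}\rrb[u^{\pm 1}]$, with $|u|=-2$. First I would recall that degree $0$ continuous cohomology is just the submodule of continuous invariants, so it suffices to show that the only elements of $\pi_t(E_h)=u^{-t/2}\pi_0(E_h)$ (for $t$ even; the group is $0$ for $t$ odd since $\pi_*(E_h)$ is concentrated in even degrees) fixed by all of $\G_h$ form $\Zp$ when $t=0$ and vanish otherwise. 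Since $\G_h = \Sbf_h\rtimes\Gal(\F_{p^h}/\F_p)$, and using the lemma $H^0_c(\G_h;M)\simeq H^0_c(\Sbf_h;M)^{\Gal}$ already recorded above, I would first compute $\pi_t(E_h)^{\Sbf_h}$ and then take Galois invariants.

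\textbf{Key steps.} (1) For $t$ odd the statement is immediate from even-periodicity/even-concentration of $E_h$. (2) For $t=0$: the action of $\Sbf_h$ on $\pi_0(E_h)=\W\llb u_1,\dots,u_{h-1}\rrb$ fixes the subring $\W$ (the automorphisms of the Honda formal group act trivially on the coefficient Witt ring in degree $0$ modulo the maximal ideal in a way that... ) — more carefully, I would argue that $\pi_0(E_h)^{\Sbf_h}$ is detected modulo the maximal ideal $\mfrak=(p,u_1,\dots,u_{h-1})$: reduce to showing $(\pi_0(E_h)/\mfrak^n)^{\Sbf_h}=\Z/p^{?}$ compatibly, using that the central $\Zpx\le\Sbf_h$ acts on the associated graded of the $\mfrak$-adic filtration by scalars $\lambda\mapsto \lambda^{k}$ on the $k$-th graded piece (this is the same central-action computation invoked for \Cref{prop:Hh2_mod_p}), so only the degree-$0$ graded piece $\W$ can contribute invariants, and then the non-central elements of $\Sbf_h$ together with Frobenius cut $\W$ down to $\Zp$. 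Taking the inverse limit over $n$ and then $\Gal$-invariants gives $\pi_0(E_h)^{\G_h}=\Zp$. (3) For $t\neq 0$ even: an invariant element would be $u^{-t/2}f$ with $f\in\pi_0(E_h)$; applying a central $\lambda\in\Zpx$, one has $\lambda\cdot u = \lambda^{-1}u$ (up to the standard normalization of the central action), so $\lambda\cdot(u^{-t/2}f)=\lambda^{t/2}u^{-t/2}(\lambda\cdot f)$, and comparing $\mfrak$-adic leading terms forces $\lambda^{t/2}\equiv 1$ for all $\lambda\in\Zpx$, impossible for $t\neq 0$; hence the invariants vanish. I would phrase this cleanly using the filtration degree so the leading term argument is rigorous.

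\textbf{Main obstacle.} The routine part is the odd-degree and the $t\neq 0$ cases, which fall out of the central $\Zpx$-action. The genuinely content-bearing step is pinning down $\pi_0(E_h)^{\Sbf_h}=\Zp$: one must check that no nonconstant power series in the $u_i$ and no Witt vector outside $\Zp$ survives, which requires knowing enough about how $\Sbf_h$ acts on $\W\llb u_1,\dots,u_{h-1}\rrb$ beyond the center — in particular that the subgroup $\W^{\x}\subseteq\Sbf_h$ (or the Galois/Frobenius-type elements) acts on $\W$ with fixed ring exactly $\Zp$, and that the $\mfrak$-adic leading-term analysis genuinely rules out invariants in positive filtration. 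I expect to handle this by reduction mod $\mfrak$ and induction up the $\mfrak$-adic tower, citing the standard description of the $\Sbf_h$-action (e.g.\ from \cite{green} or Devinatz--Hopkins), and noting this lemma is in any case classical — it is the statement that the $K(h)$-local sphere has $\pi_0=\Zp$ and $\pi_t$ with no contribution from the $0$-line otherwise — so a short proof with appropriate references suffices.
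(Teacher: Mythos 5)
The paper states this lemma without proof: it is the classical computation of the $0$-line for the $K(h)$-local sphere, and the $t=0$ case is precisely the $s=0$ instance of the Vanishing Conjecture, which the paper elsewhere attributes to \cite[Lemma 1.33]{Bobkova-Goerss}. Your skeleton (identify $H^0_c$ with continuous invariants; odd $t$ is trivial; a central character kills $t\neq 0$ even; all the content sits in $t=0$) is the right one, and your $t\neq 0$ argument works: a central $\lambda\in\Zpx$ acts on $\pi_{2k}(E_h)$ by the scalar $\lambda^{k}$ (up to the sign convention on $u$) composed with its action on $\pi_0(E_h)$, and since $\pi_{2k}(E_h)$ is torsion-free and one can choose $\lambda$ with $\lambda^{k}\neq 1$ whenever $k\neq 0$, the invariants vanish; no $\mfrak$-adic leading-term device is needed.

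Your mechanism for the $t=0$ case, however, contains a genuine error. The central $\Zpx\subseteq\Sbf_h$ acts \emph{trivially} on $\pi_0(E_h)$: for $\lambda\in\Zpx$ the endomorphism $[\lambda]$ lifts canonically to any deformation and furnishes a $\star$-isomorphism between $(G,\iota)$ and $(G,\iota\circ[\lambda])$, so the center fixes the Lubin--Tate space pointwise and acts only through the line bundle $\omega=\pi_2(E_h)$. In particular it acts trivially on every graded piece of the $\mfrak$-adic filtration of $\pi_0(E_h)$; the character $\lambda\mapsto\lambda^{k}$ is indexed by the \emph{topological} degree $2k$, not by $\mfrak$-adic filtration, and your step (2) conflates the two gradings. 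The center therefore gives no control over $\pi_0(E_h)^{\Sbf_h}$. A second slip: $\Sbf_h$ acts $\W$-linearly on $\pi_0(E_h)$, so the subgroup $\W^{\times}\subseteq\Sbf_h$ fixes $\W$ pointwise; it is the Galois factor of $\G_h=\Sbf_h\rtimes\Gal$, not any subgroup of $\Sbf_h$, that cuts $\W$ down to $\Zp$. The substantive claim $\pi_0(E_h)^{\Sbf_h}=\W$ genuinely requires non-central input — for instance the torus $\F_{p^h}^{\times}\subseteq\Sbf_h$ acts on $u_i$ modulo $\mfrak^2+(p)$ by the nontrivial character $a\mapsto a^{p^i-1}$, and even that alone does not eliminate all higher-degree monomials, since $\sum a_i(p^i-1)$ can vanish modulo $p^h-1$. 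The honest course is to cite \cite[Lemma 1.33]{Bobkova-Goerss} (or the Devinatz--Hopkins computation of $\pi_0 E_h^{h\G_h}$) for this step rather than attempt the central-action argument.
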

	
	\begin{proof}[Proof of \Cref{thm:h2<2p-1}]
		We need to show that when $(p-1)\nmid h$ and $h^2<2p-1$, a $K(h)$-local spectrum $X$ is weakly equivalent to $S^{0}_{K(h)}$ if there is a $\G_h$-equivariant isomorphism $(E_h)_*(X)\cong (E_h)_*$.
		
		Under this assumption, HFPSS for $X$ collapses at $E_2$-page by sparseness (\Cref{lem:sparseness}). As a result, any unit $[\iota_X]\in E_2^{0,0}(X)=\Zp$ is a permanent cycle and induces a map $S^0\to X$. This map factors as $S^0\to S^0_{K(h)}\xrightarrow{\iota_X} X$ since $X$ is $K(h)$-local. As $\iota_X\colon S^0_{K(h)}\to X$ induces an isomorphism on the $E_2$-page of the HFPSS, it is a weak equivalence by \cite[Theorem 5.3]{Boardman_ccss}.
	\end{proof}	
	In the general case, the first possible non-trivial differential in \eqref{eqn:HFPSS_X} for $X\in \kappa_h$ is $d_{2p-1}$. Let's consider the possible $d_{2p-1}$-differentials supported by $E_{2p-1}^{0,0}(X)=E_2^{0,0}(X)=\Zp$. 	
	\begin{construction}[{\cite[Construction 3.2]{GHMR_Picard}}] \label{con:exotic_hom}
		Fix an $\G_h$-equivariant isomorphism $f^X\colon(E_h)_*\simto (E_h)_*(X)$ and let $\iota_X=f^X(1)\in (E_h)_0(X)$. The differential \[d_{2p-1}^{X}\colon E^{0,0}_{2p-1}(X)\longrightarrow E^{2p-1,2p-2}_{2p-1}(X)\] is determined by the image of $\iota_X$. Define a homomorphism $\phi^X$ via the following commutative diagram:
		\begin{equation*}
			\begin{tikzcd}
				H_c^0(\G_h;\pi_0(E_h))\rar[dashed,"\phi^X"]\dar["(f^X)_*"',"\cong"]&H_c^{2p-1}(\G_h;\pi_{2p-2}(E_h))\dar["(f^X)_*","\cong "']\\
				H_c^0(\G_h;(E_h)_0(X))\rar["d_{2p-1}^X"]&H_c^{2p-1}(\G_h;(E_h)_{2p-2}(X))
			\end{tikzcd}
		\end{equation*}
		One can check that $\phi^X(1)$ is independent of the choice of $f^X$. We define the next detection map $\ev_2\colon\kappa_h\to H_c^{2p-1}(\G_h;\pi_{2p-2}(E_h))$ by setting $\ev_2(X):=\phi^X(1)$.
	\end{construction}
	\begin{prop}
		The map $\ev_2\colon\kappa_h\to H_c^{2p-1}(\G_h; \pi_{2p-2}(E_h))$ is a group homomorphism.
	\end{prop}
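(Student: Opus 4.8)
The plan is to show that $\ev_2$ respects the group operation on $\kappa_h$, which under the symmetric monoidal structure on $\Sp_{K(h)}$ is given by $X\otimes_{K(h)} Y = L_{K(h)}(X\wedge Y)$. The key input is that the HFPSS \eqref{eqn:HFPSS_X} is a spectral sequence of modules over the HFPSS for $S^0_{K(h)}$ (equivalently, the smash product of $K(h)$-local spectra induces a pairing of homotopy fixed point spectral sequences), and that this pairing is compatible on the $E_2$-page with the external product in continuous group cohomology together with the multiplication $(E_h)_*(X)\otimes_{(E_h)_*}(E_h)_*(Y)\to (E_h)_*(X\otimes_{K(h)} Y)$, which is an isomorphism for $X,Y\in\kappa_h$ since their $E_h$-homologies are free of rank one.

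First I would fix $\G_h$-equivariant isomorphisms $f^X\colon (E_h)_*\simto (E_h)_*(X)$ and $f^Y\colon (E_h)_*\simto (E_h)_*(Y)$, and take $f^{X\otimes Y}$ to be the composite $(E_h)_*\simeq (E_h)_*\otimes_{(E_h)_*}(E_h)_*\xrightarrow{f^X\otimes f^Y}(E_h)_*(X)\otimes_{(E_h)_*}(E_h)_*(Y)\simto (E_h)_*(X\otimes_{K(h)}Y)$, so that $\iota_{X\otimes Y}$ corresponds to $\iota_X\otimes\iota_Y$. Then I would trace the element $\iota_{X\otimes Y}\in E_{2p-1}^{0,0}(X\otimes_{K(h)} Y)=\Zp$ through the $d_{2p-1}$-differential. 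Since $\iota_X$ and $\iota_Y$ are each permanent cycles up to filtration $2p-1$ (the lower differentials vanish by sparseness, \Cref{lem:sparseness}, as $2p-2$ is the first positive multiple of $2(p-1)$ that can receive a differential from the $0$-line), the Leibniz rule for the module pairing gives
\[
d_{2p-1}^{X\otimes Y}(\iota_X\cdot\iota_Y)=d_{2p-1}^{X}(\iota_X)\cdot\iota_Y\;\pm\;\iota_X\cdot d_{2p-1}^{Y}(\iota_Y),
\]
where the products on the right are the module actions $H^{2p-1}_c(\G_h;\pi_{2p-2}(E_h))\otimes H^0_c(\G_h;\pi_0(E_h))\to H^{2p-1}_c(\G_h;\pi_{2p-2}(E_h))$ induced by the pairing; under the identifications $(f^X)_*$, $(f^Y)_*$ this module action is just multiplication by the unit in $H^0_c(\G_h;\pi_0(E_h))=\Zp$. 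Unwinding the definition of $\phi^{(-)}$ in \Cref{con:exotic_hom}, this reads $\phi^{X\otimes Y}(1)=\phi^X(1)+\phi^Y(1)$ (up to a fixed sign, which one checks is $+$ by the standard convention, e.g. by comparing with $X=Y=S^0_{K(h)}$ where all terms vanish), i.e. $\ev_2(X\otimes_{K(h)}Y)=\ev_2(X)+\ev_2(Y)$. One also checks $\ev_2$ sends the unit $S^0_{K(h)}$ to $0$, which is immediate since the HFPSS for the sphere has $d_{2p-1}=0$ on $E^{0,0}$.

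The main obstacle is setting up the multiplicative structure on the HFPSS carefully enough to justify the Leibniz formula and its compatibility with the $E_2$-page identifications. This requires knowing that the homotopy fixed point spectral sequence for $L_{K(h)}(E_h\wedge X)$ arises from a filtered object (e.g. the $\G_h$-continuous cobar/descent filtration) in a way that is lax symmetric monoidal, so that the smash product $X\wedge Y$ induces a pairing of filtered objects and hence of spectral sequences, converging to the multiplication $\pi_*(X)\otimes\pi_*(Y)\to\pi_*(X\otimes_{K(h)}Y)$ and inducing on $E_2$ the cup product in $H^*_c(\G_h;-)$ paired with the Künneth map on $E_h$-homology. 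All of this is standard — it is the same mechanism used throughout \cite{GHMR_Picard} — so I would cite the relevant multiplicativity of the descent/HFPSS and the fact that for $X,Y\in\kappa_h$ the Künneth map $(E_h)_*(X)\otimes_{(E_h)_*}(E_h)_*(Y)\to (E_h)_*(X\otimes_{K(h)}Y)$ is a $\G_h$-equivariant isomorphism (both sides being free of rank one), and then the computation above goes through. A minor point to address is well-definedness: $\ev_2(X)=\phi^X(1)$ does not depend on $f^X$ by \Cref{con:exotic_hom}, and the formula $\ev_2(X\otimes_{K(h)}Y)=\ev_2(X)+\ev_2(Y)$ is then independent of the chosen trivializations, so $\ev_2$ is indeed a homomorphism.
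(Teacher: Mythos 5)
Your proposal is correct and follows essentially the same route as the paper's proof: identify $(E_h)_*(X\wedge Y)\simeq (E_h)_*X\otimes_{(E_h)_*}(E_h)_*Y$ via the $\G_h$-equivariant K\"unneth isomorphism, transport $\iota_{X\otimes Y}=\iota_X\otimes\iota_Y$, and apply the Leibniz rule for $d_{2p-1}$ in the multiplicative HFPSS to get $\phi^{X\wedge Y}(1)=\phi^X(1)+\phi^Y(1)$. Your additional care about the lax monoidal filtration, the sign, and well-definedness only makes explicit what the paper leaves implicit.
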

	\begin{proof}
		It suffices to check $\ev_2(X\wedge_{K(h)} Y)=\ev_2(X)+\ev_2(Y)$. This follows from the K\"{u}nneth isomorphism which is compatible with the $\G_h$-actions:
		\[(E_h)_*(X\wedge_{K(h)}Y)\cong (E_h)_*X\otimes_{(E_h)_*}(E_h)_*Y.\]
		This implies 
		\begin{align*}
			E_{2p-1}^{s,t}(X\wedge_{K(h)} Y)&=E_{2}^{s,t}(X\wedge_{K(h)} Y)\\&\cong E_{2}^{s,t}(X)\otimes_{E_2^{0,0}(S^0)}E_{2}^{s,t}(Y)\\&=E_{2p-1}^{s,t}(X)\otimes_{E_{2p-1}^{0,0}(S^0)}E_{2p-1}^{s,t}(Y).
		\end{align*}
		Now by the multiplicative structure of the spectral sequence and the Leibniz rule, we have
		\begin{align*}
			&d^{X\wedge_{K(h)} Y}_{2p-1}(\iota_X\wedge  \iota_Y)=d_{2p-1}^X(\iota_X)\otimes \iota_Y+\iota_X\otimes d_{2p-1}^Y(\iota_Y)&&\\\implies& \ev_2(X\wedge_{K(h)}  Y)=\phi^{X\wedge_{K(h)} Y}(1)=\phi^X(1)+\phi^Y(1)=\ev_2(X)+\ev_2(Y).&&\qedhere
		\end{align*}
	\end{proof}
	\begin{prop}\label{prop:ev_inj}
		The map $\ev_2\colon\kappa_h\to H^{2p-1}_c(\G_h;\pi_{2p-2}(E_h))$ is injective when $4p-3>h^2$ and $(p-1)\nmid h$. In particular, it is injective when $2p-1=h^2$.
	\end{prop}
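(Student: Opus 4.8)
The plan is to reuse the proof of \Cref{thm:h2<2p-1} almost verbatim, the only new ingredient being that the vanishing of $\ev_2(X)$ removes the first possible obstruction differential in the HFPSS. Since $\ev_2$ is a group homomorphism, injectivity amounts to $\ker(\ev_2)=0$, so suppose $X\in\kappa_h$ satisfies $\ev_2(X)=0$; I want to conclude $X\simeq S^0_{K(h)}$. Fix a $\G_h$-equivariant isomorphism $f^X\colon(E_h)_*\simto(E_h)_*(X)$ and set $\iota_X:=f^X(1)$, as in \Cref{con:exotic_hom}. Because $X\in\kappa_h$, the map $f^X$ identifies the $E_2$-page of the HFPSS \eqref{eqn:HFPSS_X} for $X$ with that for $S^0_{K(h)}$, namely $H^s_c(\G_h;\pi_t(E_h))$, so \Cref{lem:sparseness}, \Cref{lem:cd} and \Cref{lem:HPFSS_s=0} all apply to it.

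First I would record that sparseness forces $E_2^{\ast,\ast}(X)=E_{2p-1}^{\ast,\ast}(X)$: a differential $d_r$ with $2\le r<2p-1$ would have target a subquotient of $H_c^{r}(\G_h;\pi_{r-1}(E_h))$, which vanishes because $2(p-1)\nmid r-1$. Hence, by \Cref{con:exotic_hom}, the class $d^{X}_{2p-1}(\iota_X)\in E_{2p-1}^{2p-1,2p-2}(X)$ corresponds under $(f^X)_*$ to $\phi^X(1)=\ev_2(X)$, which is $0$ by hypothesis; so $d^{X}_{2p-1}(\iota_X)=0$.

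Next I would kill every remaining potential differential out of $E_r^{0,0}(X)$. For $r>2p-1$ the target $E_r^{r,r-1}(X)$ is a subquotient of $H^{r}_c(\G_h;\pi_{r-1}(E_h))$; by sparseness this is zero unless $r\equiv 1\pmod{2(p-1)}$, and the smallest such $r$ exceeding $2p-1$ is $4p-3$. Since $4p-3>h^2$ by hypothesis, every such $r$ exceeds the cohomological dimension $h^2$ of $\Sbf_h$ (here using $(p-1)\nmid h$ and \Cref{lem:cd}), so the target vanishes again. Thus $\iota_X$ is a permanent cycle; and since $E_r^{-r,-r+1}(X)=0$ nothing maps into $E_r^{0,0}(X)$, so $E_\infty^{0,0}(X)=E_2^{0,0}(X)=\Zp$.

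Finally, exactly as in the proof of \Cref{thm:h2<2p-1}, a generator of $E_\infty^{0,0}(X)=\Zp$ lifts to $\pi_0(X)$ and yields a map $S^0\to X$ that factors through some $\iota\colon S^0_{K(h)}\to X$ because $X$ is $K(h)$-local. On $E_h$-homology, $\iota$ induces a $\G_h$-equivariant $(E_h)_*$-linear map $(E_h)_*\to(E_h)_*(X)$ sending $1$ to a $\G_h$-fixed scalar multiple $u\cdot\iota_X$ with $u\in H^0_c(\G_h;\pi_0(E_h))=\Zp$; since the image of this class in $E_\infty^{0,0}(X)$ is a generator, $u\in\Zpx$, so the induced map is an isomorphism and $\iota$ induces an isomorphism on $E_2$-pages, hence is a weak equivalence by \cite[Theorem 5.3]{Boardman_ccss}. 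Therefore $[X]=0$ in $\Pic_{K(h)}$. The ``in particular'' is immediate: $2p-1=h^2$ forces $4p-3=2h^2-1>h^2$, and (as noted in the introduction) also implies $(p-1)\nmid h$. I expect the only point needing care to be the identification of $\iota$ on $E_h$-homology with a unit multiple of $f^X$ rather than merely agreeing with it on $\G_h$-invariants — but since a $\G_h$-equivariant endomorphism of the free rank-one module $(E_h)_*$ is multiplication by its effect on $1$, this is exactly the bookkeeping already carried out for \Cref{thm:h2<2p-1}; the genuine content is the numerical interplay of sparseness with the horizontal vanishing line in the third paragraph.
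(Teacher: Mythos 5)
Your proof is correct and follows the same route as the paper: the vanishing of $\ev_2(X)=d^X_{2p-1}(\iota_X)$ disposes of the first possible differential, sparseness pushes the next one out to $d_{4p-3}$, and the hypothesis $4p-3>h^2$ (with $(p-1)\nmid h$) places its target above the horizontal vanishing line, so $\iota_X$ is a permanent cycle and the argument of \Cref{thm:h2<2p-1} finishes. The extra bookkeeping you supply (the identification $E_2=E_{2p-1}$ and the unit-multiple comparison at the end) is consistent with, and slightly more detailed than, the paper's version.
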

	\begin{proof}
		For any $X\in \ker\ev_2$, a unit $[\iota_X]$ in $E_2^{0,0}(X)$ does not support a $d_{2p-1}$-differential. By Sparseness (\Cref{lem:sparseness}), the next possible non-trivial differential is $d_{4p-3}^X\colon E_{4p-3}^{0,0}(X)\to E_{4p-3}^{4p-3,4p-2}(X)$. The target of this differential is zero, since it is above the horizontal vanishing line at $s=h^2$ under our assumption. The same argument shows $[\iota_X]$ does not support any higher differentials and is thus a permanent cycle. The rest of the proof is identical to that of \Cref{thm:h2<2p-1}.
	\end{proof}
	This finishes the first implication II$\implies$I in \Cref{subsec:strategy}. The goal of this paper is to answer the following question:
	\begin{question}\label{quest:kappa_h}
		Is $\kappa_h=0$ when $2p-1=h^2$? 
	\end{question}	
	\Cref{prop:ev_inj} implies this would be true if 
	\[H_c^{2p-1}(\G_h;\pi_{2p-2}(E_h))=H_c^{h^2}(\G_h;\pi_{2p-2}(E_h))=0. \]
	\subsection{A filtration on $K(h)$-local Picard groups}\label{subsec:Pic_filtration}
	The main results of this paper do not depend on this subsection. Following the construction above, one can define $\kappa_h^{(1)}:=\ker \ev_2$ and construct the next algebraic detection map using the $d_{4p-3}$-differential:
	\begin{equation*}
		\ev_3\colon \kappa_h^{(1)}\longrightarrow E_{2p}^{4p-3,4p-4}(S^0)=E_{4p-3}^{4p-3,4p-4}(S^0).
	\end{equation*}
	Eventually, we get a descent filtration on $\Pic_{K(h)}$ (see \cite[\S3.3]{BBGHPS_exotic_h2_p2}):
	\begin{equation}\label{eqn:tower}
		\begin{tikzcd}[column sep=10ex,
			/tikz/column 1/.append style={anchor=center}
			,/tikz/column 2/.append style={anchor= west},row sep=tiny]
			\cdots\dar[symbol=\subseteq]&\cdots\\
			\kappa_h^{(m)}\rar["\ev_{m+2}"]\dar[symbol=\subseteq]&E_{2m(p-1)+2}^{2(m+1)(p-1)+1,2(m+1)(p-1)}\\
			\cdots\dar[symbol=\subseteq]&\cdots\\
			\kappa_h^{(1)}\rar["\ev_3"]\dar[symbol=\subseteq]&E_{2p}^{4p-3,4p-4}\\
			\kappa_h\rar["\ev_2"]\dar[symbol=\subseteq]&E_2^{2p-1,2p-2}=H_c^{2p-1}(\G_h;\pi_{2p-2}(E_h))\\
			\Pic^0_{K(h)}\rar["\ev_1"]\dar[symbol=\subseteq]&\Pic(\G_h\text{-}\pi_0(E_h)\text{-modules})\cong H^1_c(\G_h;\pi_0(E_h)^\x)\\
			\Pic_{K(h)}\rar[twoheadrightarrow,"\ev_0"]&\Pic(\text{graded }{(E_h)_*}\text{-modules})\cong \Z/2.
		\end{tikzcd}
	\end{equation}
	Each term in this tower is the kernel of the horizontal detection map right below it.  
	\begin{rmk}\label{rem:vanishing_line}
		For each fixed $p$ and $h$, \eqref{eqn:tower} is a finite (hence Hausdorff) filtration on $\kappa_h$. This is because the HFPSS \eqref{eqn:HFPSS_X} for $S^0_{K(h)}$ has a horizontal vanishing line on the $E_r$-page when $r$ is large enough by \cite[Theorem 2.3.9]{Beaudry-Goerss-Henn}. As a result, the target of $\ev_m$ will eventually be zero and $\kappa_h^{(m)}=\kappa_h^{(m+1)}=\cdots=0$ when $m\gg 0$.
	\end{rmk}
	The right column in \eqref{eqn:tower} is the $0$-stem of a  spectral sequence (similar to the one found in \cite[Theorem 3.2.1]{MS_Picard}) to compute the homotopy groups of the Picard \emph{spectrum} $\mathfrak{pic}_{K(h)}$ for $\Sp_{K(h)}$. Indeed, $\pi_0\left(\mathfrak{pic}_{K(h)}\right)=\Pic_{K(h)}$. In a recent paper \cite{Heard_2021Sp_kn-local}, Heard has proved the following:
	\begin{thm}[{\cite[Example 6.18]{Heard_2021Sp_kn-local}}]\label{thm:DSS_pic}
		There is a descent spectral sequence (DSS) for $\mathfrak{pic}_{K(h)}$ that converges when $t-s\ge 0$, whose $E_2$-page is:
		\begin{equation*}
			E_2^{s,t}=\left\{\begin{array}{ll}
				0,&t<0;\\
				\Z/2,&s=t=0;\\
				H^s_c(\G_h;\pi_0(E_h)^\x),&t=1;\\
				H^s_c(\G_h;\pi_{t-1}(E_h)),&t\ge 2,
			\end{array}\right. \Longrightarrow \pi_{t-s}\left(\mathfrak{pic}_{K(h)}\right).
		\end{equation*}
	\end{thm}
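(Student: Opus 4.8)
The plan is to realize the claimed spectral sequence as the descent (continuous homotopy fixed point) spectral sequence obtained by applying the Picard-spectrum functor $\mathfrak{pic}$ to the pro-Galois presentation $S^0_{K(h)}\simeq E_h^{h\G_h}$ of the $K(h)$-local sphere, and then to identify its $E_2$-term. First recall the general shape of a Picard spectrum: for a $K(h)$-local $\einf$-ring $R$, the spectrum $\mathfrak{pic}(R)$ is connective with $\Omega\mathfrak{pic}(R)\simeq\mathfrak{gl}_1(R)$, so that $\pi_0\mathfrak{pic}(R)=\Pic(R)$, $\pi_1\mathfrak{pic}(R)=\pi_0\mathfrak{gl}_1(R)=(\pi_0 R)^\x$, and $\pi_t\mathfrak{pic}(R)=\pi_{t-1}\mathfrak{gl}_1(R)=\pi_{t-1}(R)$ for $t\ge 2$. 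Specializing to $R=E_h$: since $E_h$ is even periodic and $\pi_0(E_h)$ is a regular complete local ring, $\Pic(E_h)=\Z/2$ generated by $\Sigma E_h$ (this is the content already used for $\ev_0$, via \Cref{thm: E homology invertible}); hence $\pi_t\mathfrak{pic}(E_h)$ is $0$ for $t<0$ by connectivity, $\Z/2$ for $t=0$, $\pi_0(E_h)^\x$ for $t=1$, and $\pi_{t-1}(E_h)$ for $t\ge 2$ --- all carrying the evident $\G_h$-action, the last being the action on $\pi_{t-1}(E_h)$ appearing in \eqref{eqn:HFPSS_X}.

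Next I would build the spectral sequence. The Devinatz--Hopkins construction exhibits $S^0_{K(h)}$ as the continuous homotopy fixed points $E_h^{h\G_h}$, equivalently as the totalization of the cobar cosimplicial $\einf$-ring $\bullet\mapsto E_h^{\wedge\bullet+1}$ formed in $\Sp_{K(h)}$. The functor $\mathfrak{pic}$ carries such a limit to a limit, up to the connectivity constraint --- this is the Mathew--Stojanoska descent principle for Picard spectra; its extension from finite Galois extensions to the merely pro-descendable extension $S^0_{K(h)}\to E_h$, using the Behrens--Davis formalism of continuous homotopy fixed points for profinite groups, is precisely what is carried out in \cite{Heard_2021Sp_kn-local}. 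This produces an equivalence $\mathfrak{pic}_{K(h)}\simeq\mathfrak{pic}(E_h)^{h\G_h}$ and hence a conditionally convergent descent spectral sequence with $E_2^{s,t}=H^s_c(\G_h;\pi_t\mathfrak{pic}(E_h))$ abutting to $\pi_{t-s}\mathfrak{pic}_{K(h)}$; substituting the computation of $\pi_*\mathfrak{pic}(E_h)$ above gives exactly the displayed $E_2$-page.

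For convergence in the range $t-s\ge 0$ I would compare with the HFPSS \eqref{eqn:HFPSS_X} for $S^0_{K(h)}$. In the region $t\ge 2$ the Picard $E_2$-term $H^s_c(\G_h;\pi_{t-1}(E_h))$ is a reindexing of the $E_2$-page of \eqref{eqn:HFPSS_X}, and by the Mathew--Stojanoska comparison between the Picard and additive descent spectral sequences the differentials agree in a range that grows with $t$. Since the HFPSS for $S^0_{K(h)}$ converges strongly --- it acquires a horizontal vanishing line on a finite page, cf. \Cref{rem:vanishing_line} --- this controls the Picard spectral sequence above the line $t\ge 2$, while connectivity of $\mathfrak{pic}$ forces the abutment, and the $E_\infty$-page, to vanish in negative stems; together these pin down strong convergence precisely in the quadrant $t-s\ge 0$, exactly as in the $\Tmf$ case of \cite{MS_Picard}.

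\textbf{Main obstacle.} The genuine difficulty is continuity: $S^0_{K(h)}\to E_h$ is not a descendable extension in the finite sense, so the Mathew--Stojanoska machinery does not apply off the shelf. One must work $\G_h$-equivariantly with the Devinatz--Hopkins / Behrens--Davis model of continuous homotopy fixed points, verify that $\mathfrak{pic}$ commutes with the relevant \emph{continuous} (pro-)limit --- this is where connectivity of $\mathfrak{pic}$ and the pro-structure of the extension interact most delicately --- and confirm that the $E_2$-term that emerges is honest continuous cohomology of $\G_h$ rather than an uncontrolled derived limit. A secondary subtlety, genuinely different for the connective, non-additive functor $\mathfrak{pic}$ than for $E_h$ itself, is pinning down the precise convergence quadrant together with the attendant $\lim^1$ and boundedness arguments.
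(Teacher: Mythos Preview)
Your proposal matches the paper's approach: the paper does not give a self-contained proof of \Cref{thm:DSS_pic} but attributes it to Heard \cite{Heard_2021Sp_kn-local} and sketches exactly the ingredients you assemble---Galois descent for $\mathfrak{pic}$ applied to $S^0_{K(h)}\simeq E_h^{h\G_h}$, the splitting $\mathfrak{pic}(E_h)\simeq \Z/2\times\Sigma\mathrm{GL}_1(E_h)$ coming from even periodicity and regularity of $\pi_0(E_h)$, and the identification of $\pi_*\mathrm{GL}_1(E_h)$. One small slip: the equivalence should read $\mathfrak{pic}_{K(h)}\simeq\tau_{\ge 0}\,\mathfrak{pic}(E_h)^{h\G_h}$ with the connective cover, as in the paper's remark; the fixed-point spectrum itself can have nontrivial negative homotopy, and this truncation is exactly what yields your ``convergence only for $t-s\ge 0$'' statement rather than an unqualified equivalence.
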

	Let's analyze the $-1,0,1$-columns on the $E_2$-page of the descent spectral sequence \Cref{thm:DSS_pic}, illustrated below in Adams grading.	
	\begin{figure}[ht]
		\begin{tikzpicture}[xscale=4]
			\node at (-1,0) {$0$};
			\node at (0,0) {$\Z/2$};
			\node at (1,0) {$\Zpx$};
			\node at (-1,1) {$?$};
			\node at (0,1) (A) {$H^1_c(\G_h;\pi_0(E_h)^\x)$};
			\node at (1,1) {$0$};
			\node at (-1,2) {$H^2_c(\G_h;\pi_0(E_h)^\x)$};
			\node at (0,2) {$0$};
			\node at (1,2) {$\cdots$};
			\node at (-1,3) {$0$}; 
			\node at (0,3) {$\cdots$};
			\node at (1,3) {$0$};
			\node at (-1,4) {$\cdots$};
			\node at (0,4) {$0$};
			\node at (-1,5) {$0$};
			\node at (1,4) (E) {$H^{2p-2}_c(\G_h,\pi_{2p-2}(E_h))$};
			\node at (0,5) (C) {$H^{2p-1}_c(\G_h,\pi_{2p-2}(E_h))$};
			\node at (1,5) {$0$};
			\node at (-1,6) (B) {$H^{2p}_c(\G_h,\pi_{2p-2}(E_h))$};
			\node at (0,6) {$0$};
			\draw[color=red,thick,dashed, ->] (A) -- node[below left,color=red] {$d_{2p-1}$?} (B);
			\node at (1,6) {$\cdots$};
			\node at (-1,7) {$0$}; 
			\node at (0,7) {$\cdots$};
			\node at (1,7) {$0$};
			\node at (-1,8) {$\cdots$};
			\node at (0,8) {$0$};
			\node at (1,8) {$H^{4p-4}_c(\G_h,\pi_{4p-4}(E_h))$};
			\node at (-1,9) {$0$};
			\node at (0,9) (F) {$H^{4p-3}_c(\G_h,\pi_{4p-4}(E_h))$};
			\node at (1,9) {$0$};
			\node at (-1,10) (D) {$H^{4p-2}_c(\G_h,\pi_{4p-4}(E_h))$};
			\node at (0,10) {$0$};
			\node at (1,10) {$\cdots$};
			\draw[color=red,thick,dashed, ->] (C) -- node[above right,color=red] {$d_{2p-1}$?} (D);
			\draw[color=red,thick,dashed, ->] (E) -- node[above right,color=red] {$d_{2p-1}$?} (F);
			\draw[color=purple,thick,dashed, ->] (A) -- node[above right,very near start, color=purple] {$d_{4p-3}$?} (D);
			\draw[->] (-1.5,-0.5) -- (1.5,-0.5);
			\draw[->] (-1.5,-0.5) -- (-1.5,10.5);
			\node[below] at (-1.2,-0.5) {$t-s=-1$};
			\node[below] at (0,-0.5) {$0$};
			\node[below] at (1,-0.5) {$1$};
			\node[left] at (-1.5,0) {$s=0$};
			\node[left] at (-1.5,1) {$1$};
			\node[left] at (-1.5,2) {$2$};
			\node[left] at (-1.5,3) {$3$};
			\node[left] at (-1.5,4) {$\cdots$};
			\node[left] at (-1.5,5) {$2p-1$};
			\node[left] at (-1.5,6) {$2p$};
			\node[left] at (-1.5,7) {$2p+1$};
			\node[left] at (-1.5,8) {$\cdots$};
			\node[left] at (-1.5,9) {$4p-3$};
			\node[left] at (-1.5,10) {$4p-2$};
		\end{tikzpicture}
	\end{figure}
	On this page of the spectral sequence:
	\begin{itemize}
		\item $E_2^{0,0}=H^0_c(\G_h;\Z/2)=\Z/2$. The non-zero element is a permanent cycle, since it represents $S^1$ in $\Pic_{K(h)}$. So $E_\infty^{0,0}=E^{0,0}_2=\Z/2$.
		\item $E_2^{0,1}=H^0_c(\G_h;\pi_0(E_h)^\x)=\Zpx$. This term does not support any higher differential, because they represent permanent cycles $\Zpx\subseteq \pi_0\left(S_{K(h)}^0\right)^\x\cong \pi_1\left(\mathfrak{pic}_{K(h)}\right)$.
		\item $E_2^{1,1}=H^1_c(\G_h;\pi_0(E_h)^\x)=\Pic_{K(h)}^{alg,0}$. For degree reasons, this term cannot be hit by a differential. But it may support one. As a result, $E_\infty^{1,1}$ is a subgroup of $H^1_c(\G_h;\pi_0(E_h)^\x)$.
		\item By \Cref{lem:sparseness}, the next possibly nonzero terms in the $-1,0,1$-stems are when $t=2p-1$. In the $0$-stem, it is $E_2^{2p-1,2p-1}=H^{2p-1}_c(\G_h;\pi_{2p-2}(E_h))$. The only possible differential that could hit this term is $d_{2p-1}\colon E_2^{0,1}\to E_2^{2p-1,2p-1}$. But since elements in $E_2^{0,1}=\Zpx$ are all permanent cycles, this differential is zero. On the other hand, there is room for $E_2^{2p-1,2p-1}$ to support a differential. As a result, $E_\infty^{2p-1,2p-1}$ is a subgroup of $E_2^{2p-1,2p-1}=H^{2p-1}_c(\G_h;\pi_{2p-2}(E_h))$.
	\end{itemize}
	Now we can compare the $E_\infty$-page of the descent spectral spectral sequence for Picard spaces in \Cref{thm:DSS_pic} and the filtration in \eqref{eqn:tower}. Notice when $t\ge 2$, the $E_2^{s,t}$-term in  \Cref{thm:DSS_pic} is the the same as $E_2^{s,t-1}$ in HFPSS \eqref{eqn:HFPSS_X} for $X=S^0_{K(h)}$. The Picard group $\Pic_{K(h)}=\pi_0\left(\mathfrak{pic}_{K(h)}\right)$ is an extension of the terms $E_{\infty}^{s,s}$ in \Cref{thm:DSS_pic}. More precisely, we have a descending filtration $\Pic_{K(h)}=F^0\supseteq F^1\supseteq F^2\supseteq F^3\supseteq\cdots$, where the layers are related by short exact sequences:
	\begin{equation*}
		\begin{tikzcd}
			0\rar&F^{s+1}\rar&F^s\rar&E_\infty^{s,s}\rar&0
		\end{tikzcd},\qquad s\ge 0.
	\end{equation*}
	As is mentioned in \Cref{rem:vanishing_line}, this is essentially a finite filtration since $E_{\infty}^{s,s}=0$ when $s\gg 0$. In this filtration, we have $F^1=\Pic_{K(h)}^0$ and $F^2=F^3=\cdots=F^{2p-1}=\kappa_h$ is the exotic $K(h)$-local Picard group. The $\ev$-maps can then be defined as composite maps:
	\begin{align*}
		&\begin{tikzcd}[ampersand replacement=\&]
			\&E_\infty^{0,0}\dar[equal]\\F^0=\Pic_{K(h)}\ar[ur,->>]\rar["\ev_0"]\& E_2^{0,0}
		\end{tikzcd}&&\begin{tikzcd}[ampersand replacement=\&]
			\&E_\infty^{1,1}\dar[>->]\\F^1=\Pic^0_{K(h)}\ar[ur,->>]\rar["\ev_1"]\& E_2^{1,1}
		\end{tikzcd}\\ 
		&\begin{tikzcd}[ampersand replacement=\&]
			\&E_\infty^{2p-1,2p-1}\dar[>->]\\F^{2p-1}=\kappa_h\ar[ur,->>, end anchor=south west]\rar["\ev_2"]\& E_2^{2p-1,2p-1}
		\end{tikzcd}&&\begin{tikzcd}[ampersand replacement=\&]
			\&E_\infty^{4p-3,4p-3}\dar[>->]\\F^{4p-3}=\kappa_h^{(1)}\ar[ur,->>,end anchor=south west]\rar["\ev_3"]\& E_{2p}^{4p-3,4p-3}
		\end{tikzcd}
	\end{align*}
	For $\ev_3$, the only differential that can hit $E_2^{4p-3,4p-3}$ is $d_{2p-1}$. So $E_{2p}^{4p-3,4p-3}$ cannot be hit by a differential, but it may support one. As a result, $E_\infty^{4p-3,4p-3}$ is a subgroup of $E_{2p}^{4p-3,4p-3}$. 
	
	From the factorizations above, we can see $\ev_1$ and $\ev_2$ are surjective precisely when $E_2^{1,1}=E_\infty^{1,1}$ and $E_2^{2p-1,2p-1}=E_\infty^{2p-1,2p-1}$. This will be the case if the targets of the potential differentials supported at $E_2^{1,1}$ and $E_2^{2p-1,2p-1}$ are above the horizontal vanishing line on the $E_2$-page. 
	\begin{prop}\label{prop:ev_surj} Suppose $(p-1)\nmid h$. \Cref{thm:DSS_pic} implies:
		\begin{enumerate}
			\item \cite[Remark 2.6]{Piotr_2018} The map $\ev_1\colon\Pic^0_{K(h)}\to \Pic^{alg,0}_{K(h)}:=H^1_c(\G_h;\pi_0(E_h)^\x)$ is an isomorphism when $2p-1>h^2$ and is a  surjection when $2p-1=h^2$.
			\item The map $\ev_2\colon \kappa_h\to H^{2p-1}_c(\G_h;\pi_{2p-2}(E_h))$ is an isomorphism when $4p-3>h^2$ and is a surjection when $4p-3=h^2$.  
		\end{enumerate}
	\end{prop}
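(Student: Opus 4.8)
The final statement is \Cref{prop:ev_surj}, which asserts that the descent spectral sequence of \Cref{thm:DSS_pic} forces $\ev_1$ and $\ev_2$ to be surjections (indeed isomorphisms under the stronger inequalities). The plan is to read off both claims directly from the position of the relevant $E_2$-terms relative to the horizontal vanishing line at $s=h^2$, using the factorizations of $\ev_1$ and $\ev_2$ through $E_\infty$-terms established just before the proposition.

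\begin{proof}
We use the identifications $F^1=\Pic^0_{K(h)}$ and $F^{2p-1}=\kappa_h$ from the filtration above, together with the surjections $F^1\twoheadrightarrow E_\infty^{1,1}\hookrightarrow E_2^{1,1}$ and $F^{2p-1}\twoheadrightarrow E_\infty^{2p-1,2p-1}\hookrightarrow E_2^{2p-1,2p-1}$ realizing $\ev_1$ and $\ev_2$. Since by \Cref{lem:cd} the group $\Sbf_h$ has cohomological dimension $h^2$ when $(p-1)\nmid h$, and $H^s_c(\G_h;M)\simeq H^s_c(\Sbf_h;M)^{\Gal}$, the $E_2$-page of the descent spectral sequence vanishes for $s>h^2$. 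It therefore suffices to show that every potential nonzero differential \emph{out of} $E_2^{1,1}$ and $E_2^{2p-1,2p-1}$ has target above this vanishing line, so that $E_\infty^{1,1}=E_2^{1,1}$ and $E_\infty^{2p-1,2p-1}=E_2^{2p-1,2p-1}$; the surjectivity of $\ev_1,\ev_2$ then follows from the displayed factorizations.

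For (1): a differential out of $E_2^{1,1}$ is $d_r\colon E_r^{1,1}\to E_r^{1+r,r}$, and by sparseness (\Cref{lem:sparseness}) the target $E_2^{1+r,r}=H^{1+r}_c(\G_h;\pi_{r-1}(E_h))$ vanishes unless $2(p-1)\mid r-1$; since $r\ge 2$ the first candidate is $r=2p-1$, with target in cohomological degree $s=2p$. If $2p-1>h^2$, equivalently $2p>h^2+1$, then $2p>h^2$, so this target is above the vanishing line and zero; the same holds for all larger admissible $r$. Hence $E_\infty^{1,1}=E_2^{1,1}$, so $\ev_1$ is onto, and combined with \Cref{thm:h2<2p-1} (which gives $\kappa_h=0=\ker\ev_1$ in this range) it is an isomorphism. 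If instead $2p-1=h^2$, then $2p=h^2+1>h^2$, so the $d_{2p-1}$ target $H^{2p}_c$ is still above the line and vanishes, and all higher targets as well; thus $E_\infty^{1,1}=E_2^{1,1}$ and $\ev_1$ is a surjection (injectivity is no longer claimed since $\kappa_h$ may be nonzero).

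For (2): a differential out of $E_2^{2p-1,2p-1}$ is $d_r\colon E_r^{2p-1,2p-1}\to E_r^{2p-1+r,2p-2+r}$, with target $H^{2p-1+r}_c(\G_h;\pi_{2p-3+r}(E_h))$ vanishing by sparseness unless $2(p-1)\mid 2p-3+r$, i.e. unless $2(p-1)\mid r-1$; the first candidate is again $r=2p-1$, landing in cohomological degree $s=4p-2$. If $4p-3>h^2$ then $4p-2>h^2$, so this and all higher targets lie above the vanishing line and vanish, giving $E_\infty^{2p-1,2p-1}=E_2^{2p-1,2p-1}$; by \Cref{prop:ev_inj} the map $\ev_2$ is also injective in this range, hence an isomorphism. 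If $4p-3=h^2$ then $4p-2=h^2+1>h^2$, so the target is still above the line and zero, and $\ev_2$ is a surjection. In all cases the only inputs used are the vanishing line of \Cref{lem:cd}, sparseness \Cref{lem:sparseness}, the factorizations of $\ev_1,\ev_2$ through $E_\infty$, and \Cref{thm:h2<2p-1}, \Cref{prop:ev_inj} for the injectivity clauses.
\end{proof}

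The one point requiring slight care -- and the only place the argument could go wrong -- is the bookkeeping of exactly which cohomological degree each candidate differential lands in, so that the inequalities $2p-1\ge h^2$ and $4p-3\ge h^2$ are precisely the thresholds at which the first sparseness-admissible target crosses the vanishing line; everything else is formal.
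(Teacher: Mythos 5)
Your proof is correct and follows essentially the same route as the paper's: injectivity from \Cref{thm:h2<2p-1} and \Cref{prop:ev_inj}, and surjectivity by using sparseness to locate the first admissible differential out of $E_2^{1,1}$ (resp. $E_2^{2p-1,2p-1}$) as a $d_{2p-1}$ landing in cohomological degree $2p$ (resp. $4p-2$), which lies above the horizontal vanishing line at $s=h^2$ under the stated inequalities. The degree bookkeeping matches the paper exactly.
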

	\begin{proof}
		The injectivity parts are from \Cref{thm:h2<2p-1} and \Cref{prop:ev_inj}, respectively.
		
		By sparseness (\Cref{lem:sparseness}), the first possible non-trivial differentials supported at the two terms are
		\begin{align*}
			d_{2p-1}\colon & E_2^{1,1}\longrightarrow E_2^{2p,2p-1}=H^{2p}_c(\G_h;\pi_{2p-2}(E_h)),\\
			d_{2p-1}\colon& E_2^{2p-1,2p-1}\longrightarrow E_2^{4p-2,4p-3}=H^{4p-2}_c(\G_h;\pi_{4p-4}(E_h)).
		\end{align*}
		Under the assumptions, the targets of the two $d_{2p-1}$-differentials are above the horizontal vanishing line at $s=h^2$ in the respective cases. As a result, their targets vanish and $E_2^{1,1}=E_\infty^{1,1}$, $E_2^{2p-1,2p-1}=E_\infty^{2p-1,2p-1}$. This proves the surjectivity part.
	\end{proof}
	\begin{rem}\label{rem:ev_surj}
		While the proof of \Cref{prop:ev_surj} depends on \Cref{thm:DSS_pic}, the statements have been verified independent of the descent spectral sequence in many cases, sometimes even without the assumption that $(p-1)\nmid h$:
		\begin{enumerate}
			\item The map $\ev_1$ is known to be surjective when 
			\begin{itemize}
				\item $h=1$ \cite[Corollary 2.6 for $p>2$, Lemma 3.4 for $p=2$]{HMS_picard}.
				\item $h=2,p>2$ \cite[Theorem 2.9]{GHMR_Picard}. 
				\item $2(p-1)>h^2+h$ for general $h$ and $p$ \cite[Theorem 2.5]{Piotr_2018}.
			\end{itemize}
			It is an open question whether the map $\ev_1$ is surjective or not in the $h=p=2$ case. 
			\item The map $\ev_2$ is known to be an isomorphism when 
			\begin{itemize}
				\item $h=1,p=2$ \cite[Remark 3.3]{GHMR_Picard}.
				\item $h=2,p=3$ \cite[Theorem 3.4]{GHMR_Picard}.
			\end{itemize}
		\end{enumerate}
	\end{rem}
	\begin{rem}
		The filtration \eqref{eqn:tower} for $\kappa_2$ at prime $2$ has been completed studied in \cite{BBGHPS_exotic_h2_p2}. In particular, they showed that the detection maps
		\begin{align*}
			\ev_3\colon \kappa_2^{(1)}\to E_4^{5,4} & \text{ is \emph{not} surjective};\\
			\ev_4\colon \kappa_2^{(2)}\to E_6^{7,6} & \text{ is injective}.
		\end{align*}
		See \cite[Theorem 12.30]{BBGHPS_exotic_h2_p2} for the full details. 
	\end{rem}
	We conclude this subsection by noting \Cref{thm:DSS_pic} implies the following:
	\begin{cor}
		When $2p-1=h^2$,  then the followings are equivalent:
		\begin{enumerate}
			\item $\ev_1\colon \Pic_{K(h)}\simto \Pic^{alg}_{K(h)}$ is an isomorphism.
			\item $\ev_1\colon \Pic^0_{K(h)}\simto \Pic^{alg,0}_{K(h)}$ is an isomorphism.
			\item $\kappa_h:=\ker\ev_1=0$.
			\item $H_c^{2p-1}(\G_h;\pi_{2p-2}(E_h))=H_c^{h^2}(\G_h;\pi_{2p-2}(E_h))=0$.
		\end{enumerate}
	\end{cor}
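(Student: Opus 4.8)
The plan is to prove the chain of equivalences by invoking the descent spectral sequence of \Cref{thm:DSS_pic} together with the filtration analysis carried out just above. First I would establish $(1)\Leftrightarrow(2)$: by \Cref{cor:ev1_ker}, the two $\ev_1$ maps have isomorphic kernels and cokernels, so one is an isomorphism if and only if the other is. Next, $(2)\Rightarrow(3)$ is immediate from the definition $\kappa_h=\ker\ev_1$ (as maps on $\Pic^0_{K(h)}$), and conversely $(3)$ plus the surjectivity half of \Cref{prop:ev_surj}(1) (which applies since $2p-1=h^2$ forces $(p-1)\nmid h$) gives that $\ev_1$ is both injective and surjective on $\Pic^0_{K(h)}$, hence an isomorphism, i.e.\ $(3)\Rightarrow(2)$.

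For $(3)\Leftrightarrow(4)$: the implication $(4)\Rightarrow(3)$ is exactly \Cref{prop:ev_inj}, since $\ev_2$ is injective when $2p-1=h^2$ (indeed $4p-3>h^2$ then), so a vanishing target forces $\kappa_h=\ker\ev_2=\kappa_h$ to be zero — more precisely $\ev_2$ injective with zero target forces $\kappa_h=0$, and the identification $H_c^{2p-1}(\G_h;\pi_{2p-2}(E_h))=H_c^{h^2}(\G_h;\pi_{2p-2}(E_h))$ follows from $2p-1=h^2$. For the converse $(3)\Rightarrow(4)$, I would use the DSS: from the filtration discussion, $\kappa_h=F^{2p-1}$ surjects onto $E_\infty^{2p-1,2p-1}$, and I argued above that no differential hits $E_2^{2p-1,2p-1}=H^{2p-1}_c(\G_h;\pi_{2p-2}(E_h))$ (the only candidate, $d_{2p-1}$ from $E_2^{0,1}=\Zpx$, vanishes because $E_2^{0,1}$ consists of permanent cycles). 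Hence if this group were nonzero it would contribute a nonzero class to some $E_r^{2p-1,2p-1}$; one then needs that it is not killed by an \emph{outgoing} differential either, or rather: one shows directly via the exact sequence $0\to F^{2p}\to F^{2p-1}\to E_\infty^{2p-1,2p-1}\to 0$ together with $\kappa_h=F^{2p-1}=0$ that $E_\infty^{2p-1,2p-1}=0$, and then argues that the only possible outgoing differentials from $E_2^{2p-1,2p-1}$ land above the horizontal vanishing line at $s=h^2$, so $E_\infty^{2p-1,2p-1}=E_2^{2p-1,2p-1}$, giving $(4)$.

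The step I expect to be the main obstacle is precisely this last point — controlling the outgoing differentials from $E_2^{2p-1,2p-1}$ to conclude that $E_\infty^{2p-1,2p-1}=E_2^{2p-1,2p-1}$. The first potential differential is $d_{2p-1}\colon E_2^{2p-1,2p-1}\to E_2^{4p-2,4p-3}=H^{4p-2}_c(\G_h;\pi_{4p-4}(E_h))$; since $2p-1=h^2$ we have $4p-2=2h^2>h^2$, so this target is above the horizontal vanishing line of \Cref{lem:cd} and hence vanishes, and the same degree count handles all later $d_r$ for $r\geq 2p-1$. So the obstacle is really just bookkeeping: once one confirms $4p-2>h^2$ (equivalently $2p>h^2/2+1$, automatic from $2p-1=h^2$), every outgoing differential has target in filtration $>h^2$ and dies. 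Thus $E_\infty^{2p-1,2p-1}=E_2^{2p-1,2p-1}=H_c^{h^2}(\G_h;\pi_{2p-2}(E_h))$, and combining with $\kappa_h=F^{2p-1}\twoheadrightarrow E_\infty^{2p-1,2p-1}$ and the finiteness of the filtration (\Cref{rem:vanishing_line}) closes the loop.
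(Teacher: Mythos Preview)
Your proposal is correct and follows essentially the same route as the paper. The only difference is cosmetic: for $(3)\Leftrightarrow(4)$ the paper simply cites \Cref{prop:ev_surj}(2), which already states that $\ev_2$ is an \emph{isomorphism} when $4p-3>h^2$ (automatic from $2p-1=h^2$), whereas you split into $(4)\Rightarrow(3)$ via \Cref{prop:ev_inj} and then re-derive the surjectivity half of \Cref{prop:ev_surj}(2) inline for $(3)\Rightarrow(4)$ using the DSS filtration and vanishing-line argument---exactly the argument already packaged in that proposition.
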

	\begin{proof}
		(1)$\iff$(2) follows from \Cref{cor:ev1_ker}. By \Cref{prop:ev_surj}, $\ev_1$ is surjective and $\ev_2$ is an isomorphism when $2p-1=h^2$. This implies (2)$\iff$(3) and (3)$\iff$(4), respectively.
	\end{proof}
	\section{Duality}
	In \Cref{prop:ev_inj}, we have established that there is an isomorphism
	\[
	\ev_2\colon \kappa_h\simto H^{2p-1}_c(\G_h; \pi_{2p-2}(E_h))
	\]
	under the conditions that $4p-3>h^2$ and $h$ is not divisible by $p-1$. In particular, this is true when $2p-1=h^2$. In light of this injection, we are thus interested in determining the group $H_c^{h^2}(\G_h; \pi_{2p-2}(E_h))$. The purpose of this section is reduce this computation using duality argument. We will prove the successive implications II$\impliedby$III$\impliedby$IV$\impliedby$V mentioned in \Cref{subsec:strategy}:
	\begin{prop}\label{prop:duality_reduction} Suppose $(p-1)\nmid h$.
		\begin{enumerate}
			\item (\Cref{prop:Hh2_mod_p}) $H_c^{h^2}(\G_h;\pi_{2p-2}(E_h))\cong H_c^{h^2}(\G_h;\pi_{2p-2}(E_h)/p)$.
			\item (\Cref{prop:Hh2_reduction}) For a general $t\in \Z$, we have
			\begin{equation*}
				H_c^{h^2}(\G_h;\pi_t(E_h)/p)\cong \left[ \colim_{p\in J\trianglelefteq \pi_0(E_h)}H^0_c\left(\G_h;\left.\pi_{2h-t-\frac{p^N|v_h|}{p-1}}(E_h)\right/ J\right)\right]^\vee,
			\end{equation*}
			where $J\trianglelefteq \pi_0(E_h)$ ranges through all open invariant ideals containing $p$ and $N$ is the smallest integer such that $v_h^{p^N}$ is invariant mod $J$. The colimit system is described in \Cref{defn:mod_m_infty}. 
		\end{enumerate}
	\end{prop}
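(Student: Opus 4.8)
The plan is to run the two duality reductions sketched in \Cref{subsec:strategy}: part~(1) is the step~III$\,\Rightarrow\,$II, and part~(2) is the composite of the arguments in steps~IV and V of \Cref{subsec:strategy}.

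\textbf{Part (1).} Apply continuous cohomology to the short exact sequence of $\G_h$-modules
\[
0\longrightarrow\pi_{2p-2}(E_h)\xrightarrow{\ \cdot p\ }\pi_{2p-2}(E_h)\longrightarrow\pi_{2p-2}(E_h)/p\longrightarrow 0.
\]
Since $(p-1)\nmid h$, \Cref{lem:cd} gives $\cd_p(\Sbf_h)=h^2$, and because $H^s_c(\G_h;M)\cong H^s_c(\Sbf_h;M)^{\Gal}$ we get $H^{h^2+1}_c(\G_h;-)=0$; the long exact sequence then gives a surjection $H^{h^2}_c(\G_h;\pi_{2p-2}(E_h))\twoheadrightarrow H^{h^2}_c(\G_h;\pi_{2p-2}(E_h)/p)$ with kernel $p\cdot H^{h^2}_c(\G_h;\pi_{2p-2}(E_h))$, so it suffices to show this group is killed by $p$. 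For that I would use the subgroup $\Zpx=Z(\Sbf_h)$, which is central even in $\G_h$ since the Galois action on $\W$ fixes $\Zp$. A central element acts trivially on group cohomology, so for $a\in\Zpx$ the automorphism of $H^{h^2}_c(\G_h;\pi_{2p-2}(E_h))$ induced by the coefficient action of $a$ is the identity. But $\Zpx$ acts trivially on $\pi_0(E_h)$ (a central scalar $a$ alters a deformation only by the $\star$-isomorphism $[a]$, defined over the universal deformation ring) and acts on $u\in\pi_{-2}(E_h)$ by $a$, hence on $\pi_{2p-2}(E_h)=\pi_0(E_h)\cdot u^{1-p}$ by the scalar $a^{1-p}\in\Zp^{\times}$. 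Taking $a=1+p$, multiplication by $(1+p)^{1-p}-1$ is zero on $H^{h^2}_c(\G_h;\pi_{2p-2}(E_h))$; since $v_p\bigl((1+p)^{1-p}-1\bigr)=1$, this scalar is $p$ times a unit and $p$ annihilates the group. (Alternatively this follows from the Lyndon--Hochschild--Serre spectral sequence for $\Zpx\le\Sbf_h$, or from Poincar\'e duality between $H^{h^2}_c$ and $H_0$ together with right-exactness of coinvariants, as indicated in \Cref{subsec:strategy}.)

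\textbf{Part (2).} Fix $t\in\Z$; the arguments of steps~IV and V apply with $2p-2$ replaced by $t$. Poincar\'e duality $H^{h^2}_c(\Sbf_h;N)\cong H^0_c(\Sbf_h;N^{\vee})^{\vee}$ applied to $N=\pi_t(E_h)/p$, together with Gross--Hopkins duality $\pi_t(E_h)^{\vee}\cong\pi_{2h-t}(E_h)\ldetr/\mfrak^{\infty}$ (whence $(\pi_t(E_h)/p)^{\vee}\cong\pi_{2h-t}(E_h)\ldetr/(p,u_1^{\infty},\dots,u_{h-1}^{\infty})$), gives
\[
H^{h^2}_c(\Sbf_h;\pi_t(E_h)/p)\cong H^0_c\bigl(\Sbf_h;\pi_{2h-t}(E_h)\ldetr/(p,u_1^{\infty},\dots,u_{h-1}^{\infty})\bigr)^{\vee}.
\]
Next, \Cref{prop:Hh2_reduction} and the Gross--Hopkins formula $\pi_0(E_h)\ldetr/J\cong\Sigma^{p^N|v_h|/(p-1)}\pi_0(E_h)/J$ rewrite the source of this $H^0_c$ as the filtered colimit $\colim_{p\in J}\pi_{2h-t-p^N|v_h|/(p-1)}(E_h)/J$ over open invariant ideals $J\trianglelefteq\pi_0(E_h)$ containing $p$ (these are cofinal among open ideals containing $p$, with the Koszul maps as transition maps), $N$ minimal with $v_h^{p^N}$ invariant mod~$J$; and $H^0_c(\Sbf_h;-)$ commutes with this filtered colimit of discrete modules. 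It then remains to absorb the outer Pontryagin dual: each $\pi_0(E_h)/J$ is Artinian Gorenstein, so Matlis duality of finite $\pi_0(E_h)$-modules — which is $\G_h$-equivariant up to a further determinant twist, again by Gross--Hopkins — identifies $H^0_c(\Sbf_h;\pi_k(E_h)/J)^{\vee}=H_0(\Sbf_h;(\pi_k(E_h)/J)^{\vee})$ with an $H^0_c$ of a module of the same form with a shifted grading, and interchanges the $\lim$ of Pontryagin duals with a $\colim$. Taking $\Gal$-invariants throughout replaces $\Sbf_h$ by $\G_h$ and yields the stated isomorphism.

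\textbf{Main obstacle.} Part~(1) is essentially formal once the central action on $\pi_{2p-2}(E_h)$ is pinned down. The delicate point is the bookkeeping in part~(2): one must reconcile the shift $t\mapsto 2h-t$ from Gross--Hopkins duality of $E_h$, the suspension by $p^N|v_h|/(p-1)$ produced by the determinant twist on each quotient $\pi_0(E_h)/J$, and the contravariance-and-shift introduced by Matlis duality of the Artinian Gorenstein ring $\pi_0(E_h)/J$, and check that these combine — together with the $\lim$/$\colim$ interchange — into the single clean grading shift in the statement. One also needs that the open invariant ideals form a cofinal system and that $H^0_c$ (equivalently, that the relevant $\lim^1$, which vanishes because $h^2$ is the top cohomological degree) commutes with the colimits and limits involved.
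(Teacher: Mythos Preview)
Your Part~(1) is correct and essentially identical to the paper's argument (the paper cites the $p$-torsion of $H^*_c(\G_h;\pi_{2p-2}(E_h))$ as a lemma from Goerss--Hopkins; you supply the standard proof via the central $\Zpx$-action, which is fine).

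For Part~(2), your chain of reductions --- cohomological Poincar\'e duality $H^{h^2}_c\cong (H^0_c\text{ of the Pontryagin dual})^\vee$, Gross--Hopkins identification of $(\pi_t(E_h)/p)^\vee$, commuting $H^0_c$ with the filtered colimit over open invariant $J\ni p$, and the suspension formula for the determinant twist modulo $J$ --- is exactly what the paper does. That part is correct and matches the paper.

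The gap is your final ``absorb the outer Pontryagin dual via Matlis duality'' step. The paper does \emph{not} perform any such step: its proof simply combines the ingredients above and stops. In fact, tracing the argument (and comparing with the strategy in \S\ref{subsec:strategy} and with how the result is invoked in the proof of Theorem~C, where the dual reappears) shows that what is actually established is
\[
H_c^{h^2}(\G_h;\pi_t(E_h)/p)\;\simeq\;\Bigl(\colim_{p\in J}H^0_c\bigl(\G_h;\pi_{2h-t-\frac{p^N|v_h|}{p-1}}(E_h)/J\bigr)\Bigr)^{\!\vee},
\]
i.e.\ the stated isomorphism with an outer $(-)^\vee$; the paper's displayed statement is missing this dual. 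Your attempt to remove it by a second application of Gross--Hopkins/Matlis duality is only a sketch: you would have to check that the extra determinant twist and the extra grading shift $t\mapsto 2h-t$ introduced by dualizing again precisely cancel, and that the resulting $\lim$/$\colim$ interchange lands you back on a colimit of the same shape --- none of which you actually verify. For the applications in the paper (vanishing or one-dimensionality of an $\F_p$-vector space) the outer dual is harmless, so you should either leave the $(-)^\vee$ in place or note that both sides are $\F_p$-vector spaces and the dual is irrelevant for the intended use, rather than invoke an unjustified Matlis-duality cancellation.
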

	\subsection{Reduction to mod-$p$ coefficients}
	The purpose of this subsection is to prove (1) in \Cref{prop:duality_reduction}. This is the second step III$\implies$II in \Cref{subsec:strategy}.
	\begin{lem}[Bounded torsion, \mbox{\cite[page 8]{Goerss-Hopkins}}]\label{lem:bounded_torsion}
		The  cohomology group $H^*_c(\G_h;\pi_{2p-2}(E_h))$ is $p$-torsion.
	\end{lem}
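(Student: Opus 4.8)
The plan is to exploit the action of the center of $\G_h$ on the coefficient module, which in this case yields a conclusion even stronger than a bounded exponent. First I would recall that $\G_h=\Sbf_h\rtimes\Gal(\F_{p^h}/\F_p)$ contains the copy of $\Zpx$ sitting inside $\Sbf_h$ as the units of the central $\Z_p$ in the maximal order of the associated central division algebra over $\Qp$, and that this subgroup is \emph{central in all of $\G_h$}: the Galois action on $\W\F_{p^h}$ fixes $\Z_p$ pointwise. Next I would pin down how $\Zpx$ acts on $\pi_{2p-2}(E_h)$. By Lubin--Tate theory, an element $a\in\Zpx$ acts trivially on $\pi_0(E_h)$ --- the multiplication-by-$a$ endomorphism of the Honda formal group lifts canonically to every deformation, so $a$ fixes each deformation together with its classifying map --- while it scales $\pi_2(E_h)$ (the Lie algebra of the universal deformation) by $a$. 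Since $\pi_{2p-2}(E_h)$ is free of rank one over $\pi_0(E_h)$, generated by the $(p-1)$st power of a generator of $\pi_2(E_h)$, the element $a$ acts on it, $\pi_0(E_h)$-linearly, by the scalar $a^{p-1}$ (the exact normalization of the exponent is immaterial below).

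The key step is then the standard fact that inner automorphisms induce the identity on group cohomology. Since $a$ is central, conjugation by $a$ is trivial, so the $\G_h$-module automorphism $m\mapsto am$ of $\pi_{2p-2}(E_h)$ --- which is multiplication by the scalar $a^{p-1}$ --- induces the identity on $H^*_c(\G_h;\pi_{2p-2}(E_h))$. Hence $a^{p-1}-1$ annihilates $H^*_c(\G_h;\pi_{2p-2}(E_h))$ for every $a\in\Zpx$. Taking $a=1+p$ and using $(1+p)^{p-1}\equiv 1-p\pmod{p^2}$, the element $a^{p-1}-1$ has $p$-adic valuation exactly $1$, hence equals $p$ times a unit of $\Z_p$; therefore $H^*_c(\G_h;\pi_{2p-2}(E_h))$ is annihilated by $p$, and in particular it is bounded $p$-torsion. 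No finiteness hypothesis on the module is needed.

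The only point demanding genuine care --- and the main obstacle --- is the precise identification of the $\Zpx$-action on $\pi_{2p-2}(E_h)$: one must verify that it really is multiplication by the \emph{central} scalar $a^{p-1}$ with no accompanying twist of $\pi_0(E_h)$, which amounts to unwinding the deformation-theoretic description of the $\G_h$-action on Morava $E$-theory and the canonicity of the lift of the $[a]$-series; this is where I would appeal to \cite[page 8]{Goerss-Hopkins}. As a cross-check, the weaker statement ``$H^*_c$ is torsion'' comes even more cheaply: for a suitable $a$ the scalar $a^{p-1}-1$ is a unit in $\Qp$, so the Lyndon--Hochschild--Serre spectral sequence of the central extension $1\to\Zpx\to\G_h\to\G_h/\Zpx\to 1$ forces $H^*_c(\G_h;\pi_{2p-2}(E_h)\otimes\Qp)=0$; but extracting the bounded exponent still goes through the explicit valuation above, so I would present the direct argument.
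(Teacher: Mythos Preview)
The paper does not give its own proof of this lemma; it is stated with a bare citation to Goerss--Hopkins and then used as a black box in the proof of \Cref{prop:Hh2_mod_p}. Your argument is correct and is precisely the standard one behind that citation: the central copy of $\Zpx\subseteq\G_h$ acts on $\pi_{2(p-1)}(E_h)$ through the character $a\mapsto a^{\pm(p-1)}$, and since central elements act as the identity on group cohomology, each scalar $a^{\pm(p-1)}-1$ annihilates $H^*_c(\G_h;\pi_{2p-2}(E_h))$. Your valuation computation with $a=1+p$ is right and yields annihilation by $p$ itself, slightly sharper than the phrase ``bounded $p$-torsion'' suggests. The paper implicitly corroborates exactly this mechanism in the remark following \Cref{prop:Hh2_mod_p}, where it notes the same conclusion holds for $\pi_t(E_h)$ whenever $t=2m(p-1)$ with $p\nmid m$ --- that hypothesis on $m$ is precisely what makes $(1+p)^{\pm m(p-1)}-1$ have $p$-adic valuation exactly~$1$.
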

	\begin{prop}\label{prop:Hh2_mod_p}
		If $(p-1)\nmid h$, then we have an isomorphism: 
		\begin{equation*}
			H_c^{h^2}(\G_h;\pi_{2p-2}(E_h))\simto H_c^{h^2}(\G_h;\pi_{2p-2}(E_h)/p).
		\end{equation*}
	\end{prop}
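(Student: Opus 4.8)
The plan is to prove the statement by analyzing the long exact sequence in continuous cohomology associated to the short exact sequence of $\G_h$-modules
\begin{equation*}
    0 \longrightarrow \pi_{2p-2}(E_h) \overset{p}{\longrightarrow} \pi_{2p-2}(E_h) \longrightarrow \pi_{2p-2}(E_h)/p \longrightarrow 0.
\end{equation*}
This yields an exact sequence
\begin{equation*}
    H^{h^2}_c(\G_h;\pi_{2p-2}(E_h)) \overset{p}{\longrightarrow} H^{h^2}_c(\G_h;\pi_{2p-2}(E_h)) \longrightarrow H^{h^2}_c(\G_h;\pi_{2p-2}(E_h)/p) \longrightarrow H^{h^2+1}_c(\G_h;\pi_{2p-2}(E_h)).
\end{equation*}
The last group vanishes: since $(p-1)\nmid h$, the cohomological dimension of $\Sbf_h$ is $h^2$ by \Cref{lem:cd}, and $H^{h^2+1}_c(\G_h;-)$ is a $\Gal$-invariants of $H^{h^2+1}_c(\Sbf_h;-)=0$. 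So the map $H^{h^2}_c(\G_h;\pi_{2p-2}(E_h))\to H^{h^2}_c(\G_h;\pi_{2p-2}(E_h)/p)$ is surjective, and its kernel is the image of multiplication by $p$ on $H^{h^2}_c(\G_h;\pi_{2p-2}(E_h))$.

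Thus it remains to show that multiplication by $p$ is the zero map on $H^{h^2}_c(\G_h;\pi_{2p-2}(E_h))$. By \Cref{lem:bounded_torsion}, this group is $p$-power torsion, so it suffices to show it has no elements of order $p^2$, i.e. that multiplication by $p$ is zero rather than merely nilpotent. The natural tool is the center $\Zpx\le \Sbf_h$ acting on $\pi_{2p-2}(E_h)$: an element $\alpha\in\Zpx$ acts on $\pi_{2}(E_h)=\pi_0(E_h)\cdot u$ (with $u$ the periodicity class) by the formula coming from its action on the Honda formal group, and hence on $\pi_{2p-2}(E_h)=\pi_0(E_h)\cdot u^{1-p}$ by the $(p-1)$-st power of that. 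Concretely, a topological generator $\alpha$ of $\Zpx$ acts on $u$ by a unit congruent to $\alpha$ modulo $\mfrak$, so it acts on $u^{1-p}$ by $\alpha^{1-p}$ times a unit $\equiv 1\bmod\mfrak$. One then uses a Hochschild--Lyndon--Serre (or Lyndon--Hochschild--Serre) spectral sequence for the extension $1\to \Zpx\to \Sbf_h\to \Sbf_h/\Zpx\to 1$, or rather restricts attention to the central subgroup: because $\Zpx$ is central, it acts trivially on $H^*_c(\Sbf_h;-)$ via conjugation, but it acts on the coefficient module $\pi_{2p-2}(E_h)$ by a nontrivially-varying scalar, and the resulting ``scalar action'' on cohomology forces $p$-divisibility statements. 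The key point will be that the difference $\alpha^{1-p}-1$ is a unit times $p$ (a $1$-unit to the $(p-1)$ power has the form $1+p\cdot(\text{unit})$ when $\alpha$ is a generator, using $v_p(\alpha^{p-1}-1)=1$ from the structure of $\Zpx$), so that $p$ annihilates $H^{h^2}_c(\G_h;\pi_{2p-2}(E_h))$ after comparing the two module structures.

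The main obstacle I anticipate is making precise the comparison between the $\Zpx$-action on the coefficients and the triviality of its conjugation action, i.e. extracting from centrality the statement that the induced self-map of $H^{h^2}_c(\G_h;\pi_{2p-2}(E_h))$ is \emph{both} multiplication by the scalar $\alpha^{1-p}$ (up to a $1$-unit) \emph{and} the identity. In top degree $s=h^2=\cd_p(\Sbf_h)$ this is cleanest: $H^{h^2}_c$ is right-exact in the coefficient module, so the endomorphism induced by multiplication by any unit $c\equiv 1\bmod\mfrak$ of $\pi_0(E_h)$ on $\pi_{2p-2}(E_h)$ agrees with the identity on cohomology once one knows such $c$ act trivially — which follows because $\Sbf_h$ acts on $\pi_0(E_h)$ with $H^0=\Zp$ and a Nakayama/completeness argument shows $1$-units are ``cohomologically trivial'' scalars in top degree. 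Alternatively, as the authors note, one can bypass this entirely via Poincar\'e duality $H^{h^2}_c(\Sbf_h;M)\cong H_0(\Sbf_h;M^\vee)^\vee$ together with Gross--Hopkins duality identifying $\pi_{2p-2}(E_h)^\vee$, reducing to a statement about coinvariants of $\pi_{2h-2p+2}(E_h)\ldetr/\mfrak^\infty$ where the $p$-divisibility is visible on the nose; I would carry out the direct central-element argument first and invoke the duality argument as a cross-check.
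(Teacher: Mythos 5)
Your argument is correct and is essentially the paper's: the same long exact sequence induced by $0\to M\xrightarrow{p}M\to M/p\to 0$, with $H^{h^2+1}_c$ killed by the cohomological dimension bound of \Cref{lem:cd}, and the isomorphism then following once multiplication by $p$ is zero on $H^{h^2}_c(\G_h;\pi_{2p-2}(E_h))$. The only divergence is that the paper reads \Cref{lem:bounded_torsion} as already asserting that this group is annihilated by $p$ (so that all multiplication-by-$p$ maps in the long exact sequence vanish) and stops there, whereas you re-derive that annihilation from the action of the central $\Zpx$ on $\pi_{2p-2}(E_h)$ by the scalar $\alpha^{\pm(p-1)}$ versus its trivial conjugation action on cohomology --- which is precisely the standard proof of that lemma, so your additional care about the scalar-versus-identity comparison is sound but not needed beyond citing the lemma.
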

	\begin{proof}
		Let $M=\pi_{2p-2}(E_h)$. There is a short exact sequence of $\G_h$-$\pi_0(E_h)$-modules
		\begin{equation}\label{eqn:mod_p_ses}
			\begin{tikzcd}
				0\arrow[r] & M\arrow[r,"p"] & M\arrow[r] & M/p\arrow[r] & 0.
			\end{tikzcd}
		\end{equation}
		This short exact sequence induces a long exact sequence in cohomology 
		\begin{equation}\label{eqn:mod_p_les}
			\cdots \to H^{k}_c(\G_h; M)\xrightarrow{p}  H^{k}_c(\G_h;M)\to H^k(\G_h; M/p)\xrightarrow{\delta} H^{k+1}(\G_h; M)\to \cdots 
		\end{equation}
		By \Cref{lem:bounded_torsion}, all the multiplication-by-$p$ maps in \eqref{eqn:mod_p_les} are zero.  Since $p-1$ does not divide $h$, $\cd_p(\G)=h^2$ by \Cref{lem:cd}. As a result, the cohomology groups $H^s_c(\G_h;-)=0$ when $s>h^2$. This means the long exact sequence \eqref{eqn:mod_p_les} ends with 
		\[
		0\to H_c^{h^2}(\G_h;M)\to  H_c^{h^2}(\G_h;M/p)\to 0
		\]
		and we get the desired isomorphism.
	\end{proof}
	\begin{rem}
		Let $M=\pi_{2p-2}(E_h)$ as above. When $s=0$, we have $\delta\colon H^0_c(\G_h;M/p)\simto H_c^1(\G_h;M)$. When $1\le s\le h^2-1$, there is a short exact sequence instead:
		\begin{equation*}			
			0\to H_c^s(\G_h;M)\to H_c^s(\G_h;M/p)\xrightarrow{\delta }H_c^{s+1}(\G_h;M)\to 0.
		\end{equation*}
		Since all three groups above are $\F_p$-vector spaces, the short exact sequence splits (non-canonically). As a result, we have $H_c^s(\G_h;M/p)\cong H_c^s(\G_h;M)\oplus H_c^{s+1}(\G_h;M)$ for $1\le s\le h^2-1$.
	\end{rem}
	\begin{rem}
		The claims above hold for any $M=\pi_t(E_h)$, where $t=2m(p-1)$ and $p\nmid m$.
	\end{rem}
	\subsection{Poincar\'e duality}
	The Morava stabilizer group $\G_h$ is not just a profinite group, but is also a compact $p$-adic Lie group of dimension $h^2$. This imposes a great deal of more structures on its (co-)homology. In this section, we review the theory of Poincar\'e duality for $p$-adic analytic groups following \cite{SW}.  Recall that for a property $P$, a profinite group $G$ is said to be virtually $P$ if there is an open normal subgroup of $G$ which is $P$. A profinite group $G$ has Poincar\'e duality of dimension $d$ if 
	
	\[
	H^d_c(G, \Z_p\llb G\rrb )\cong \Z_p
	\]
	as abelian groups (\cite[(4.4.1)]{SW}). 
	
	\begin{thm}[Lazard, {\cite[Theorem 5.1.9]{SW}}]\label{thm:lazard}
		Let $G$ be a compact $p$-adic analytic group. Then $G$ is a virtual Poincar\'e duality group of dimension $d = \dim G$. 
	\end{thm}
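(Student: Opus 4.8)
\quad The plan is to reduce the statement to the case of a uniform pro-$p$ group, where Poincar\'e duality is essentially a theorem of Lazard, and then to indicate how that uniform case is proved by a Koszul-type free resolution over the Iwasawa algebra. The structurally new input is only the reduction; the rest can be quoted.

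First I would invoke Lazard's characterization of $p$-adic analytic groups: a compact $p$-adic analytic group $G$ contains an open normal uniformly powerful pro-$p$ subgroup $U$, and the analytic dimension of $G$ coincides with the minimal number $d$ of topological generators of $U$, which is also $\dim U$. Since an open subgroup of a profinite group has finite index, it suffices to prove that $U$ itself is a Poincar\'e duality group of dimension $d$; granting this, $G$ is virtually a $\mathrm{PD}_d$ group and $d=\dim G$, as required.

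Next I would analyze the uniform group $U$. Such a $U$ is $p$-torsion free, and by Lazard $H^*_c(U;\F_p)\cong \Lambda_{\F_p}(e_1,\dots,e_d)$ is an exterior algebra on $d$ degree-one classes, so $\cd_p(U)=d$. Write $\Lambda:=\Zp\llb U\rrb$ for the Iwasawa algebra; it is Noetherian, and with respect to the augmentation (dimension-subgroup) filtration its associated graded ring is a polynomial ring $\mathrm{gr}(\Lambda)\cong \F_p[X_0,X_1,\dots,X_d]$, where $X_0$ is the symbol of $p$ and $X_1,\dots,X_d$ are the symbols of $g_1-1,\dots,g_d-1$ for a topological generating set $g_1,\dots,g_d$ of $U$. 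Because $X_1,\dots,X_d$ form a regular sequence in the polynomial ring, a filtered-ring lifting argument shows that $g_1-1,\dots,g_d-1$ generate the augmentation ideal $I\trianglelefteq\Lambda$ and that the associated Koszul-type complex is a finite free $\Lambda$-resolution of $\Lambda/I=\Zp$ of length exactly $d$. Applying $\hom_\Lambda(-,\Lambda)$ and using that $\mathrm{gr}(\Lambda)$ is a domain, one reads off that $H^s_c(U;\Lambda)=\Ext^s_\Lambda(\Zp,\Lambda)$ vanishes for $s\neq d$ and that $\Ext^d_\Lambda(\Zp,\Lambda)\cong\Lambda/I\cong\Zp$ as abelian groups. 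This exhibits $U$ as a $\mathrm{PD}_d$ group and finishes the argument.

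The main obstacle is the technical core of the last paragraph: making the Koszul-type resolution of $\Zp$ over the non-commutative ring $\Lambda$ and its $\hom_\Lambda(-,\Lambda)$-dual completely explicit, and in particular pinning the top group $\Ext^d_\Lambda(\Zp,\Lambda)$ down to $\Zp$ itself rather than merely to its mod-$p$ reduction or to some a priori unidentified rank-one module. Everything surrounding it --- the reduction to the uniform subgroup $U$, Noetherianity and regularity of $\Lambda$, the description of $\mathrm{gr}(\Lambda)$, and the exterior-algebra computation of $H^*_c(U;\F_p)$ --- is classical Lazard theory that can be cited. An alternative route, which also requires care, replaces the Iwasawa-algebra computation by Lazard's comparison isomorphism between $H^*_c(U;-)$ and the cohomology of the $\Qp$-Lie algebra of $U$ together with an integral refinement, and then invokes Poincar\'e duality for that Lie algebra; I would attempt the Koszul argument first, since it stays inside the profinite world where the dualizing module $H^d_c(U;\Lambda)$ is most directly accessible.
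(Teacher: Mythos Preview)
The paper does not prove this theorem at all: it is stated as a citation of \cite[Theorem 5.1.9]{SW} and used as a black box. There is therefore no ``paper's own proof'' to compare your proposal against. Your sketch (reduce to an open uniform pro-$p$ subgroup, then exhibit a Koszul-type resolution over the Iwasawa algebra to compute $\Ext^*_\Lambda(\Zp,\Lambda)$) is a standard and essentially correct outline of how Lazard's theorem is established in the literature, but for the purposes of this paper no proof is required---a citation suffices.
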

	
	In the case of the Morava stabilizer group, $\Sbf_h$ is a virtual Poincar\'e duality group of dimension $h^2$. When $(p-1)\nmid h$, then $\Sbf_h$ contains no $p$-torsion subgroups. In fact, its maximal finite subgroup is cyclic of order $p^h-1$ \cite[Table 5.3.1]{Chromatic_structure}.  Under this assumption, $\Sbf_h$ is a Poincar\'e duality group of dimension $h^2$ (as opposed to a \emph{virtual} one). 
	
	Now $G$ being a profinite group having Poincar\'e duality of dimension $n$ implies that there is a \emph{dualizing module} $D(G)$ such that there are natural isomorphisms \cite[Theorem 4.4.3]{SW} for continuous $G$-modules $M$ that are inverse limits of discrete $G$-modules:
	\[
	H^{n-k}_c(G; M)\longrightarrow H_k^c(G; D(G)\widehat{\otimes}_{\Z_p}M),
	\]
	and for discrete $p$-torsion $G$-modules
	\[
	H_{n-k}^c(G; M)\longrightarrow H^k_c(G; \hom_{\Z_p}(D_p(G), M)).
	\]
	The dualizing module $D(G)$ is given by 
	\[
	D(G) = H^n_c(G; \Zp \llb G \rrb).
	\]
	Note that, as the coefficients $\Zp\llb G\rrb$ has a left $G$-action, the dualizing module $D(G)$ has a corresponding right $G$-action. See \cite[\S 4.5]{BGHS_dualizing_spheres} for further details.
	
	In the case when $G$ is the Morava Stabilizer group $\G_h$, Strickland has calculated the dualizing module $D(\G_h)$ along with its $\G_h$-action.
	
	\begin{thm}[Strickland, \cite{STRICKLAND20001021}]
		As a $\G_h$-module, $H_c^{h^2}(\G_h; \Z_p\llb \G_h\rrb )\cong \Z_p$ has the trivial $\G_h$-action. 
	\end{thm}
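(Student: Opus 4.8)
The plan is to pin down the $\G_h$-action on the dualizing module $D(\G_h) = H^{h^2}_c(\G_h; \Zp\llb\G_h\rrb)$ by combining three inputs: (i) Lazard's theorem (\Cref{thm:lazard}), which tells us this cohomology group is abstractly isomorphic to $\Zp$, so that the $\G_h$-action is recorded by a continuous character $\chi\colon \G_h \to \Zp^\times = \Aut(\Zp)$; (ii) the compatibility of the dualizing module with the decomposition $\G_h = \Sbf_h \rtimes \Gal(\F_{p^h}/\F_p)$; and (iii) the fact that the Galois factor is finite while $\Sbf_h$ is a $p$-adic analytic group, which forces $\chi$ to vanish on the finite part and to factor through the pro-$p$ structure in a controlled way. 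So the first step is to reduce, via the Galois descent isomorphism $H^s_c(\G_h;M) \simeq H^s_c(\Sbf_h;M)^{\Gal}$ quoted earlier in the excerpt, to understanding the character of $\Sbf_h$ on its own dualizing module, together with how $\Gal$ conjugates it.

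Next I would compute the restriction of $\chi$ to $\Sbf_h$. The key structural fact is that for a compact $p$-adic analytic group $G$ of dimension $d$ which is moreover $p$-torsion-free (which holds for $\Sbf_h$ when $(p-1)\nmid h$, as noted in the excerpt, its maximal finite subgroup being cyclic of order $p^h - 1$), the dualizing character is the determinant of the adjoint representation of $G$ on its $\Zp$-Lie algebra $\mathfrak{g}$; i.e. $\chi(g) = \det(\mathrm{Ad}(g) \mid \mathfrak{g})$, valued in $\Zp^\times$. This is the Lazard/Serre description of the orientation character of a Poincaré-duality $p$-adic Lie group. For $\Sbf_h$, the relevant Lie algebra is (an order in) the $\Qp$-division algebra $D_{1/h}$ of invariants $1/h$, or equivalently $\mathfrak{gl}_1$ of the height-$h$ formal group deformation algebra, and the adjoint action of $\Sbf_h$ on this algebra is by \emph{conjugation}. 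The crucial observation is that conjugation by any $g$ is an \emph{inner} automorphism of the division algebra, hence has reduced norm $1$ on each graded piece, and therefore has determinant $1$ on the underlying $\Zp$-module; concretely, $\det(\mathrm{Ad}(g)) = \mathrm{Nrd}(g)^{?}/\mathrm{Nrd}(g)^{?} = 1$. Thus $\chi|_{\Sbf_h}$ is trivial. (Alternatively, one invokes Strickland's explicit identification \cite{STRICKLAND20001021} of $D(\G_h)$ with a twist of $\pi_0(E_h)$ that turns out to be untwisted in the relevant sense, but the adjoint-determinant computation is the cleanest self-contained route.)

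It then remains to handle the Galois factor: a priori $\chi$ could be nontrivial on $\Gal(\F_{p^h}/\F_p) \cong \Z/h$, but since $\Zp^\times$ has no $h$-torsion when $(p-1)\nmid h$ (the torsion of $\Zp^\times$ is $\mu_{p-1}$ for $p$ odd, and $2$-torsion for $p=2$, neither of which contains a subgroup of order dividing $h$ under the standing hypothesis), any homomorphism $\Z/h \to \Zp^\times$ is trivial. Combined with $\chi|_{\Sbf_h} = 1$ and the semidirect-product structure, we get $\chi \equiv 1$ on all of $\G_h$, i.e. the $\G_h$-action on $H^{h^2}_c(\G_h;\Zp\llb\G_h\rrb) \cong \Zp$ is trivial, as claimed. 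I expect the main obstacle to be the careful justification of the adjoint-determinant formula for the dualizing character in the \emph{profinite/continuous} setting — Lazard's original statement is for compact $p$-adic Lie groups, and one must check the formula descends correctly to $\Sbf_h$ as a profinite group and interacts properly with the Galois extension; this is exactly the content of Strickland's paper \cite{STRICKLAND20001021}, which we cite, so in practice the argument is: reduce to the Lie-algebra computation, observe the adjoint action is by inner automorphisms of a division algebra hence determinant-preserving, and clear the Galois part by a torsion count.
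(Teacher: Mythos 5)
First, note that the paper does not actually prove this statement: it is quoted verbatim from Strickland's paper, so there is no in‑paper argument to compare yours against. Judged on its own terms, your outline is the standard one (Lazard/Symonds--Weigel: the dualizing module of a $p$-torsion-free compact $p$-adic analytic group is $\Lambda^{\dim}\mathfrak{g}$, i.e.\ $\Zp$ twisted by $\det\circ\mathrm{Ad}$), and your computation on $\Sbf_h$ is right: $\mathfrak{g}=\mathcal{O}_D$ with $\mathrm{Ad}(g)$ equal to conjugation, and $\det_{\Qp}(x\mapsto gxg^{-1})=\det(L_g)\det(R_{g^{-1}})=N_{D/\Qp}(g)\,N_{D/\Qp}(g)^{-1}=1$, so $\chi|_{\Sbf_h}$ is trivial. (Your ``reduced norm $1$ on each graded piece'' phrasing and the unresolved exponents should be replaced by exactly this left-times-right-inverse computation.)

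The genuine gap is in your treatment of the Galois factor. A character $\chi$ trivial on $\Sbf_h$ factors through $\Gal(\F_{p^h}/\F_p)\cong\Z/h$, and a homomorphism $\Z/h\to\Zpx$ lands in the torsion subgroup $\mu_{p-1}$ (for $p$ odd); such a homomorphism is forced to be trivial iff $\gcd(h,p-1)=1$, which is \emph{not} implied by $(p-1)\nmid h$. For instance $h=4$, $p=7$ satisfies $(p-1)\nmid h$ but admits the nontrivial character $\Z/4\to\mu_6\subset\Z_7^\times$ sending the generator to $-1$. (Under the paper's standing hypothesis $2p-1=h^2$ one does get $\gcd(h,p-1)=1$, since $h$ is odd and $p-1=(h-1)(h+1)/2$, but your argument as written claims more than that and the theorem is stated for general $h$.) The clean fix stays inside your own framework: the Galois generator acts on $\Sbf_h=\mathcal{O}_D^\times$ as conjugation by the uniformizer $S\in D^\times$ (with $S^h=p$), so the action of \emph{every} element of $\G_h$ on $\mathfrak{g}=\mathcal{O}_D$ is an inner automorphism of $D$ and the same determinant computation gives $\chi\equiv 1$ on all of $\G_h$, with no torsion count needed. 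You should also flag that extending the Poincar\'e-duality/orientation-character formalism from $\Sbf_h$ to $\G_h$ (which contains the finite subgroup $\Gal$) requires a remark — e.g.\ restriction along the open subgroup $\Sbf_h\le\G_h$ and the freeness of $\Zp\llb\G_h\rrb$ over $\Zp\llb\Sbf_h\rrb$ — rather than a bare appeal to \Cref{thm:lazard}, which only gives \emph{virtual} Poincar\'e duality.
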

	
	\begin{cor}\label{cor:PD}
		Assume $(p-1)\nmid h$. The dualizing module $I_p$ for $\G_h$ is $\Zp^\vee \cong \Q_p/\Z_p$ with the trivial $\G_h$-action. Hence, we have a duality
		\[
		H^{h^2-k}_c(\G_h;M)\cong H^c_k(\G_h; M)
		\]
		that is natural in $p$-profinite continuous $\G_h$-modules $M$.
	\end{cor}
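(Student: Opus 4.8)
The plan is to combine the three ingredients assembled just above---Lazard's theorem that a compact $p$-adic analytic group is a virtual Poincar\'e duality group (\Cref{thm:lazard}), the absence of $p$-torsion in $\G_h$ when $(p-1)\nmid h$, and Strickland's computation \cite{STRICKLAND20001021} of the dualizing module---and feed them into the profinite Poincar\'e duality formalism recalled above.

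First I would record that $\G_h=\Sbf_h\rtimes\Gal(\F_{p^h}/\F_p)$ is a compact $p$-adic analytic group of dimension $\dim\Sbf_h=h^2$ (forming a semidirect product with a finite group changes neither the analytic structure nor the dimension), so \Cref{thm:lazard} makes it a virtual Poincar\'e duality group of dimension $h^2$. Next, when $(p-1)\nmid h$ the maximal finite subgroup of $\Sbf_h$ is cyclic of order $p^h-1$ \cite[Table 5.3.1]{Chromatic_structure}, and combining this with the prime-to-$p$ factor $\Gal$ one checks that $\G_h$ contains no nontrivial finite $p$-subgroup; hence $\cd_p\G_h=h^2<\infty$ (cf.\ \Cref{lem:cd}). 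A virtual Poincar\'e duality group of finite $p$-cohomological dimension is an honest Poincar\'e duality group of the same dimension \cite{SW}, so $\G_h$ is a profinite Poincar\'e duality group of dimension $h^2$.

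With this in hand, the formalism recalled above (see \cite[\S 4.5]{linearization}) supplies, for every profinite $\Z_p\llb\G_h\rrb$-module $M$, a natural isomorphism
\[
H^{h^2-k}_c(\G_h;M)\;\cong\;H_k^c\bigl(\G_h;\,D(\G_h)\,\widehat{\otimes}_{\Z_p}M\bigr),\qquad D(\G_h)=H^{h^2}_c(\G_h;\Z_p\llb\G_h\rrb).
\]
Now Strickland's theorem enters: $D(\G_h)\cong\Z_p$ with the \emph{trivial} $\G_h$-action, so the associated discrete dualizing module is $\Z_p^\vee\cong\Q_p/\Z_p$, again with trivial action. Since $D(\G_h)$ is trivial and free of rank one over $\Z_p$, the completed tensor product $D(\G_h)\,\widehat{\otimes}_{\Z_p}M$ is canonically identified with $M$, and the displayed isomorphism collapses to the asserted natural duality $H^{h^2-k}_c(\G_h;M)\cong H_k^c(\G_h;M)$.

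I expect the only genuinely delicate point to be the passage from ``virtual'' to ``honest'' Poincar\'e duality group, i.e.\ confirming that $\G_h$ itself (not merely $\Sbf_h$) has finite $p$-cohomological dimension and invoking the correct profinite promotion statement; everything else is a direct substitution of Strickland's computation into the standard duality isomorphism, so I do not anticipate any further obstruction.
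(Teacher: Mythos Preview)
Your proposal is correct and follows essentially the same route as the paper: the corollary has no separate proof block and is meant to be read off from the surrounding discussion (Lazard's theorem, the absence of $p$-torsion in $\G_h$ when $(p-1)\nmid h$ so that virtual Poincar\'e duality becomes honest, the general duality formalism with dualizing module $D(G)$, and Strickland's identification of $D(\G_h)$ as $\Z_p$ with trivial action). You have simply written this out in detail, and your care about the passage from $\Sbf_h$ to $\G_h$ and from the compact dualizing module $D(\G_h)\cong\Z_p$ to the discrete one $\Z_p^\vee\cong\Q_p/\Z_p$ makes explicit a step the paper leaves to the reader.
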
	
	\begin{defn}\label{defn:Pontryagin_dual}
		Write $(-)^\vee$ for $\hom_{c}(M, I_p(G))$. If $M$ has a continuous $G$-action, we endow $M^\vee$ with a left $G$-action via 
		\[
		(g\cdot f)(x) := f(g^{-1}x).
		\]
		In the case of $G=\G_h$, \Cref{cor:PD} implies $M^\vee$ is the continuous Pontryagin dual $M^\vee\cong \hom_{c}(M,\Z/p^\infty)$. 
	\end{defn}
	As usual, this also induces a version of Poincar\'e duality for $p$-profinite $\G_h$-modules $M$ in purely cohomological terms when $(p-1)\nmid h$: (\cite[Theorem 4.26]{BGHS_dualizing_spheres})
	\begin{equation}\label{eqn:coh_PD}
		H^k_c(\G_h; M)\cong H^{h^2-k}_c(\G_h;M^\vee)^\vee.
	\end{equation}
	
	\begin{cor}\label{cor:PD_Et}
		Assume $(p-1)\nmid h$. We have the following duality:
		\begin{align}
			H^{h^2}_c(\Sbf_h;\pi_t(E_h))&\cong H_0(\Sbf_h;\pi_t(E_h)),& H^{h^2}_c(\Sbf_h;\pi_t(E_h)/p)&\cong H_0(\Sbf_h;\pi_t(E_h)/p);\label{eqn:PD_homology}\\ 
			H^{h^2}_c(\Sbf_h;\pi_t(E_h))&\cong H^0_c(\Sbf_h;\pi_t(E_h)^\vee)^\vee,& H^{h^2}_c(\Sbf_h;\pi_t(E_h)/p)&\cong H^0_c(\Sbf_h;(\pi_t(E_h)/p)^\vee)^\vee.\label{eqn:PD_cohomology}
		\end{align}
	\end{cor}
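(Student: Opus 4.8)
The plan is to obtain both lines of isomorphisms as the specialization to $k=h^2$ and to the modules $M=\pi_t(E_h)$ and $M=\pi_t(E_h)/p$ of the Poincar\'e duality statements already recorded in \Cref{cor:PD} and \eqref{eqn:coh_PD}, after transporting those statements from $\G_h$ to the subgroup $\Sbf_h$. So the only real work is to justify that $\Sbf_h$ satisfies the same duality with the same (trivial) dualizing module.

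First I would note that when $(p-1)\nmid h$ the group $\Sbf_h$ has no $p$-torsion (its maximal finite subgroup is cyclic of order $p^h-1$), so by \Cref{thm:lazard} it is a genuine Poincar\'e duality group of dimension $h^2$. Since $\Gal(\F_{p^h}/\F_p)$ is finite, $\Sbf_h$ is open of finite index in $\G_h$, and $\Z_p\llb\G_h\rrb\cong \mathrm{Ind}_{\Sbf_h}^{\G_h}\Z_p\llb\Sbf_h\rrb$; hence by Shapiro's lemma $H^{h^2}_c(\Sbf_h;\Z_p\llb\Sbf_h\rrb)\cong H^{h^2}_c(\G_h;\Z_p\llb\G_h\rrb)\big|_{\Sbf_h}$ as $\Sbf_h$-modules, so the dualizing module of $\Sbf_h$ is the restriction of that of $\G_h$. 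By Strickland's theorem the latter is $\Z_p$ with trivial action, hence $D(\Sbf_h)\cong\Z_p$ with trivial $\Sbf_h$-action. Therefore the analogues of \Cref{cor:PD} and of \eqref{eqn:coh_PD} hold verbatim with $\G_h$ replaced by $\Sbf_h$: for every discrete $\Sbf_h$-module $M$ one has $H^{h^2-k}_c(\Sbf_h;M)\cong H^c_k(\Sbf_h;M)$ and $H^k_c(\Sbf_h;M)\cong H^{h^2-k}_c(\Sbf_h;M^\vee)^\vee$, both natural in $M$.

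Now set $k=h^2$. The homological duality becomes $H^{h^2}_c(\Sbf_h;M)\cong H^c_0(\Sbf_h;M)$, and $H^c_0(\Sbf_h;M)$ is just the coinvariants $M_{\Sbf_h}=H_0(\Sbf_h;M)$; taking $M=\pi_t(E_h)$ and $M=\pi_t(E_h)/p$ gives the two isomorphisms in \eqref{eqn:PD_homology}. The cohomological duality becomes $H^{h^2}_c(\Sbf_h;M)\cong H^0_c(\Sbf_h;M^\vee)^\vee$, and the same two choices of $M$ give \eqref{eqn:PD_cohomology}; naturality is inherited from the general statements. I do not expect a serious obstacle: the only point needing care is the identification $D(\Sbf_h)\cong D(\G_h)|_{\Sbf_h}$, i.e.\ that the dualizing module of an open finite-index subgroup of a profinite Poincar\'e duality group is the restriction of the ambient one, which is exactly the Shapiro-lemma argument sketched above.
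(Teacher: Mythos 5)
Your proposal is correct and matches the paper's (implicit) argument: the corollary is just the specialization of \Cref{cor:PD} and \eqref{eqn:coh_PD} to $k=h^2$ and $M=\pi_t(E_h)$, $\pi_t(E_h)/p$, using that the preceding discussion (Lazard plus Strickland) already shows $\Sbf_h$ is a Poincar\'e duality group of dimension $h^2$ with trivial dualizing module when $(p-1)\nmid h$. Your extra care in restricting the dualizing module from $\G_h$ to $\Sbf_h$ via Shapiro's lemma is a reasonable way to fill in the one step the paper leaves tacit, and it causes no difficulty since the trivial module restricts to the trivial module.
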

	\begin{rem}
		Using the duality \eqref{eqn:PD_homology}, we can give another proof of \Cref{prop:Hh2_mod_p} by showing:
		\begin{enumerate}
			\item The group homology $H_*(\G_h;\pi_{2p-2}(E_h))$ is $p$-torsion. This is because the orbit of the action by $\Zpx\subseteq \Sbf_h$ is already $p$-torsion.
			\item Apply $H_*$ to the short exact sequence \eqref{eqn:mod_p_ses} to get the a long exact sequence like \eqref{eqn:mod_p_les}. Equivalently, we are essentially applying \eqref{eqn:PD_homology} to every term in \eqref{eqn:mod_p_les}.
		\end{enumerate}		
	\end{rem}
	\subsection{Gross-Hopkins duality}\label{subsec:GH_duality}
	Now we want to use \eqref{eqn:PD_cohomology} to compute $H^{h^2}_c(\G_h;M/p)$ where $M=E_{t}$. To do so, we have to identify the $\G_h$-equivariant Pontryagin dual of $M$. This is realized by Gross-Hopkins duality.
	\begin{rem}
		For the purpose of \Cref{quest:kappa_h}, we only need to study the case when $t=2p-2$. Later for the Vanishing Conjecture, we also need the $t=0$ case. So we will give a uniform treatment for all $t\in \Z$ in the remainder of this section.  
	\end{rem}
	
	We remind the reader the definition of the determinant twist. The group $\Sbf_h$ can be realized as a subgroup of $\mathrm{GL}_{h}(\W)$. Thus, taking the determinant, we have a map 
	\[
	\det\colon \Sbf_h\to \W^\times.
	\]
	It turns out that this map actually factors through $\Z_p^\times$. We extend this to the extended Morava stabilizer group via the composite
	\[
	\begin{tikzcd}
		\det\colon \G_h\cong \Sbf_h\rtimes \Gal\arrow[r]&  \Z_p^\times \times \Gal \arrow[r,"proj"] & \Z_p^\times. 
	\end{tikzcd}
	\]
	This results in a $\G_h$-action on $\Z_p$. 	
	\begin{defn}\label{defn:det}
		The $\G_h$-action on $\Z_p$ above is denoted by $\Z_p\langle \det\rangle$. Given a Morava module $M$ we write $M\langle \det\rangle$ for the Morava module
		\[
		M\langle \det \rangle \cong M\otimes_{\Z_p}\Z_p\langle \det\rangle
		\]
		with the diagonal $\G_h$-action. We refer to $M\langle \det\rangle$ as the \emph{determinant twist} of $M$.
	\end{defn}
	\begin{defn}\label{defn:mod_m_infty}
		We now describe the quotient mod $\mfrak^{\infty}$. Let $M$ be a $\G_h$-$\pi_0(E_h)$-module, we define
		\begin{equation}\label{eqn:mod_m_infty}
			M/\mfrak^\infty:=\colim_{J\trianglelefteq \pi_0(E_h)}M/J,
		\end{equation}
		where $J$ ranges over all open invariant ideals of $\pi_0(E_h)$.  Suppose $J\subseteq J'$ is an inclusion of open invariant ideals of $\pi_0(E_h)$. Then we have a $\G_h$-equivariant isomorphism:
		\begin{equation*}
			M/J'\cong \{[m]\in M/J\mid x\cdot [m]=0,\forall x\in J'\}.
		\end{equation*} 
		This gives the structure map $M/J'\to M/J$ in the colimit system. Similarly, in the mod-$p$ case, we have
		\[M/(p,u_1^\infty,\cdots,u_{h-1}^\infty):=\colim_{p\in J\trianglelefteq \pi_0(E_h)}M/J,  \]
		where $J$ ranges over all invariant ideals of $\pi_0(E_h)$ containing $p$.
	\end{defn}
	\begin{thm}[Gross-Hopkins]\label{thm:GH_dual} Let $\mfrak\trianglelefteq \pi_0(E_h)$ be the maximal ideal.
		\begin{enumerate}
			\item \textup{\cite{STRICKLAND20001021}} There is a $\G_h$-equivariant perfect pairing of $\G_h$-$\pi_0(E_h)$-modules: 
			\[ \rho\colon\pi_0(E_h)/\mfrak^\infty\otimes_{\pi_0(E_h)}\Omega^{h-1}\longrightarrow \Qp/\Zp,\]
			where $\Omega^{h-1}$ is the top exterior power of the module of continuous K\"ahler differentials for $\pi_0(E_h)$ relative to $\W$. 
			\item \textup{\cite{GrossHopkins}} The module $\Omega^{h-1}$ is $\G_h$-equivariantly equivalent to the bundle $\omega^{\otimes h}\ldetr$ over the Lubin-Tate deformation space, where $\omega=\pi_2(E_h)$ is the sheaf of invariant of differentials and $\ldetr$ is the determinant twist. 
		\end{enumerate}
	\end{thm}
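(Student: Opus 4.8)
The plan is to obtain part (1) from $\G_h$-equivariant Grothendieck local duality for the complete regular local ring $R := \pi_0(E_h) \cong \W\llb u_1,\dots,u_{h-1}\rrb$, and part (2) from a computation of the $\G_h$-equivariant structure on the canonical bundle via the Gross--Hopkins period map. Throughout, $\mfrak = (p,u_1,\dots,u_{h-1})$ is the maximal ideal, $R$ is a complete regular Noetherian local ring of Krull dimension $h$, and $\G_h$ acts on $R$ by continuous ring automorphisms through the Lubin--Tate (Morava module) action.

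For part (1): first I would record that $\mfrak$ is generated by the regular sequence $p,u_1,\dots,u_{h-1}$, so that $R/\mfrak^\infty$, interpreted as $\colim_n R/(p^n,u_1^n,\dots,u_{h-1}^n)$ along the multiplication maps by $p\,u_1\cdots u_{h-1}$, computes the top local cohomology $H^h_\mfrak(R)$ and is an injective hull of the residue field of $R$. Grothendieck local duality for the regular $\W$-algebra $R$ --- an incarnation of the Grothendieck residue pairing, using that $\W/\Zp$ is finite \'etale so that the relative dualizing module of $R$ over $\W$ is $\Omega^{h-1}_{R/\W}$, free of rank one over $R$ --- then produces a perfect $R$-bilinear pairing
\[
H^h_\mfrak(R)\otimes_R \Omega^{h-1}_{R/\W}\longrightarrow H^1_{(p)}(\W)\xrightarrow{\ \mathrm{tr}\ }H^1_{(p)}(\Zp)=\Qp/\Zp,
\]
the last map being induced by the trace $\W\to\Zp$; this is the pairing $\rho$. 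Because local cohomology, the module of differentials, and the comparison isomorphisms in local duality are all functorial for ring automorphisms, $\rho$ is automatically $\G_h$-equivariant once one identifies $\Omega^{h-1}_{R/\W}$ with the geometric canonical bundle $\Omega^{h-1}$ over $LT_h$ (a chain-rule computation with the $\G_h$-action on coordinates); the only point one must check by hand is that $\G_h$ acts $\Zp$-linearly and commutes with the trace, so that $\Qp/\Zp$ carries the trivial action.

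For part (2): the remaining task is to pin down the $\G_h$-equivariant structure of the line bundle $\Omega^{h-1}$. Lubin--Tate deformation theory identifies the relative cotangent bundle of $LT_h$ over $\W$ with a rank-$(h-1)$ bundle built from $\omega=\pi_2(E_h)$, which already forces $\Omega^{h-1}\cong\omega^{\otimes h}\otimes\mc{L}$ for some $\G_h$-equivariant line $\mc{L}$; the content of the theorem is that $\mc{L}\cong\Zp\ldetr$. For this I would invoke the Gross--Hopkins period map: there is an $\Sbf_h$-equivariant, \'etale map $\Pi\colon LT_h\to\PP(M)$, where $M$ is the rank-$h$ (covariant Dieudonn\'e) $\W$-module carrying the tautological $\Sbf_h\subset\mathrm{GL}_h(\W)$-action, under which $\omega$ is the pullback of $\mathcal{O}(-1)$ up to a line. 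The canonical bundle of $\PP(M)$ is $\mathcal{O}(-h)\otimes\det(M)^{\pm 1}$, where $\det(M)^{\pm 1}$ is exactly the line on which $\Sbf_h$ acts through $\det\colon\Sbf_h\to\Zpx$; pulling back along the \'etale $\Pi$ identifies $\Omega^{h-1}_{LT_h/\W}$ with $\omega^{\otimes h}\otimes\Zp\ldetr$ as $\Sbf_h$-equivariant bundles, and one then upgrades from $\Sbf_h$ to $\G_h=\Sbf_h\rtimes\Gal$ by Galois descent.

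The main obstacle is part (2), specifically the period map: one must know that $\Pi$ is well defined, $\Sbf_h$-equivariant, and \'etale --- so that pullback of top differentials is legitimate and genuinely identifies the two line bundles --- and that the $\det$-character appears on the nose rather than up to an unknown character of $\Zpx$, i.e.\ that it matches the normalization of $\ldetr$; getting this normalization exactly right is where essentially all the work lies. By contrast the local-duality input for part (1) is formal once the ring theory of $\pi_0(E_h)$ and the functoriality of local cohomology are in hand, the only delicate bookkeeping being the tower $\Zp\to\W\to\pi_0(E_h)$ required to make the target of $\rho$ come out as $\Qp/\Zp$ with trivial $\G_h$-action.
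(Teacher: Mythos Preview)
The paper does not give its own proof of this theorem: it is stated with attribution to Gross--Hopkins and cited from Strickland (\cite{STRICKLAND20001021}), and then used as a black box to deduce \Cref{cor:GH_dual}. So there is no ``paper's own proof'' to compare against here.

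That said, your outline is essentially the argument that appears in the cited references. Part (1) is indeed Grothendieck local duality for the complete regular local ring $\pi_0(E_h)$, with the identification $\pi_0(E_h)/\mfrak^\infty \simeq H^h_\mfrak(\pi_0(E_h))$ and the residue pairing landing in $\Qp/\Zp$ via the trace along the unramified extension $\W/\Zp$; functoriality in ring automorphisms supplies the $\G_h$-equivariance. Part (2) is exactly the Gross--Hopkins period map argument: the equivariant \'etale map to $\PP^{h-1}$ lets one pull back the canonical bundle $\mathcal{O}(-h)\otimes\det$, and matching $\mathcal{O}(-1)$ with $\omega$ yields the identification $\Omega^{h-1}\simeq\omega^{\otimes h}\ldetr$. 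One technical point worth flagging is that the period map is \'etale only on the rigid-analytic generic fiber (after inverting $p$), not on the integral Lubin--Tate space, so the comparison of canonical bundles is carried out over the generic fiber and then one argues that the resulting line-bundle identification extends integrally; this is handled carefully in Strickland's account but is easy to gloss over in a sketch. Your assessment that the normalization of the determinant character is where the real content lies is accurate.
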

	\begin{cor}[See {\cite[Proposition 19]{STRICKLAND20001021}}]\label{cor:GH_dual} 
		The $\G_h$-equivariant Pontryagin dual of $\pi_t(E_h)$ is
		\[\left(\pi_t(E_h)\right)^\vee\cong (\pi_{2h-t}(E_h))\ldetr/\mfrak^{\infty}. \]
	\end{cor}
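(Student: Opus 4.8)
The plan is to deduce this directly from the Gross--Hopkins theorem above by twisting the perfect pairing $\rho$ by suitable line bundles. Since $E_h$ is even periodic, $\pi_t(E_h)=0$ for $t$ odd, in which case both sides vanish; so I may assume $t=2n$. Write $\omega:=\pi_2(E_h)$ for the invertible sheaf of invariant differentials; periodicity of $E_h$ gives $\G_h$-equivariant isomorphisms $\pi_{2n}(E_h)\cong\omega^{\otimes n}$ of invertible $\pi_0(E_h)$-modules with semilinear $\G_h$-action, and in particular $\pi_{2h}(E_h)\cong\omega^{\otimes h}$.

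First I would rewrite the pairing using part (2) of the theorem: $\Omega^{h-1}\cong\omega^{\otimes h}\ldetr\cong\pi_{2h}(E_h)\ldetr$ as $\G_h$-$\pi_0(E_h)$-modules, so part (1) becomes a $\G_h$-equivariant perfect pairing
\[\rho\colon\bigl(\pi_0(E_h)/\mfrak^\infty\bigr)\otimes_{\pi_0(E_h)}\bigl(\pi_{2h}(E_h)\ldetr\bigr)\longrightarrow\Qp/\Zp\]
with trivial $\G_h$-action on the target. Then I would set $L:=\omega^{\otimes(h-n)}\ldetr$, an invertible $\pi_0(E_h)$-module with semilinear $\G_h$-action, and tensor the first factor of $\rho$ by $L$ and the second factor by $L^{-1}$. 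Using the canonical $\G_h$-equivariant isomorphism $L\otimes_{\pi_0(E_h)}L^{-1}\cong\pi_0(E_h)$, this yields a $\G_h$-equivariant perfect pairing whose first factor is
\[\bigl(\pi_0(E_h)/\mfrak^\infty\bigr)\otimes_{\pi_0(E_h)}L\cong L/\mfrak^\infty=\pi_{2h-t}(E_h)\ldetr/\mfrak^\infty\]
and whose second factor is $\pi_{2h}(E_h)\ldetr\otimes_{\pi_0(E_h)}L^{-1}\cong\omega^{\otimes h}\ldetr\otimes_{\pi_0(E_h)}\omega^{\otimes(n-h)}\ldetr^{-1}\cong\omega^{\otimes n}\cong\pi_t(E_h)$. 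A perfect pairing
\[\bigl(\pi_{2h-t}(E_h)\ldetr/\mfrak^\infty\bigr)\otimes_{\pi_0(E_h)}\pi_t(E_h)\longrightarrow\Qp/\Zp\]
is precisely a $\G_h$-equivariant identification $\bigl(\pi_t(E_h)\bigr)^\vee\cong\pi_{2h-t}(E_h)\ldetr/\mfrak^\infty$, which is the claim.

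The points requiring care are the bookkeeping ones. One must check that tensoring with an invertible $\pi_0(E_h)$-module $L$ carrying a compatible semilinear $\G_h$-action preserves $\G_h$-equivariance (it does, since $L\otimes_{\pi_0(E_h)}L^{-1}\cong\pi_0(E_h)$ equivariantly and $\Qp/\Zp$ has the trivial action) and sends perfect pairings to perfect pairings in the appropriate topological sense -- here the profinite $\pi_0(E_h)$-module $\pi_t(E_h)$ is paired with the discrete torsion module $\pi_{2h-t}(E_h)\ldetr/\mfrak^\infty$, each being the continuous Pontryagin dual of the other. I expect this to be the main (though mild) obstacle: verifying that $-\otimes_{\pi_0(E_h)}L$ is an exact autoequivalence of the relevant category of topological $\G_h$-$\pi_0(E_h)$-modules which is naturally and $\G_h$-equivariantly compatible with Pontryagin duality, i.e.\ that $(M\otimes_{\pi_0(E_h)}L)^\vee\cong M^\vee\otimes_{\pi_0(E_h)}L^{-1}$.
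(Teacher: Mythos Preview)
Your proposal is correct and is exactly the argument the paper has in mind: the corollary is stated without proof immediately after the Gross--Hopkins theorem, and your twist-by-$L$ computation is the standard way to extract it from parts (1) and (2). The bookkeeping caveats you flag (equivariance and perfectness preserved under tensoring by an invertible semilinear module) are routine and not discussed in the paper either.
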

	\begin{proof}
		The $\G_h$-equivariant perfect pairing $\rho$ in \Cref{thm:GH_dual} can be rewritten as:
		\begin{align*}
			\rho\colon \pi_0(E_h)/\mfrak^\infty\otimes_{\pi_0(E_h)}\Omega^{h-1}\cong \pi_t(E_h)\otimes_{\pi_0(E_h)}\pi_{-t}(E_h)/\mfrak^\infty\otimes_{\pi_0(E_h)}\Omega^{h-1}\longrightarrow \Qp/\Zp.
		\end{align*}
		This implies the $\G_h$-equivariant Pontryagin dual of $\pi_t(E_h)$ is $\pi_{-t}(E_h)/\mfrak^\infty\otimes_{\pi_0(E_h)}\Omega^{h-1}$, which is $\G_h$-equivariantly isomorphic to $(\pi_{2h-t}(E_h))\ldetr/\mfrak^{\infty}$ by  part (2) of \Cref{thm:GH_dual}. 
	\end{proof}
	Applying \eqref{eqn:coh_PD}, we have proved:
	\begin{equation}\label{eqn:GH_duality}
		H^{h^2}_c(\G_h;\pi_t(E_h))\cong H_c^0\left(\G_h;(\pi_{2h-t}(E_h))\ldetr/\mfrak^\infty\right)^\vee.
	\end{equation}
	The formula holds with $\pi_t(E_h)$ replaced by $\pi_t(E_h)/p$. This yields the third implication IV$\implies$III in \Cref{subsec:strategy} when $t=2p-2$.  Notice \eqref{eqn:mod_m_infty} is a filtered colimit, and the group $\G_h$ is topologically finitely generated (since it is a finite dimensional $p$-adic Lie group), we have 
	\begin{prop}
		There are isomorphisms:
		\begin{align*}
			\colim_{J\trianglelefteq E_h} H^0_c(\G_h;M/J)\simto& H^0_c(\G_h;M/\mfrak^{\infty}),\\
			\colim_{p\in J\trianglelefteq E_h} H^0_c(\G_h;M/J)\simto& H^0_c(\G_h;M/(p,u_1^\infty,\cdots,u_{h-1}^\infty)).
		\end{align*}
	\end{prop}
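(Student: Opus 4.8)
The plan is to prove both isomorphisms at once, since the presence or absence of the condition $p\in J$ plays no role in the argument. What must be shown is that the functor $H^0_c(\G_h;-)$ commutes with the filtered colimit appearing in the definition of $M/\mfrak^\infty$ (resp.\ of $M/(p)+\mfrak^\infty$), the comparison map being the evident one induced by the structure maps $M/J\to\colim_J M/J$. So it suffices to establish the general statement: if $\{N_j\}_{j\in\mc{J}}$ is a filtered system of $\G_h$-$\pi_0(E_h)$-modules, then the canonical map $\colim_j H^0_c(\G_h;N_j)\to H^0_c(\G_h;\colim_j N_j)$ is an isomorphism.

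The key reduction is to express $H^0_c$ as a \emph{finite} limit. For a continuous $\G_h$-module $N$ one has $H^0_c(\G_h;N)=N^{\G_h}$. Since $\G_h$ is a compact $p$-adic Lie group it is topologically finitely generated; fix topological generators $g_1,\dots,g_n$. For any $\G_h$-$\pi_0(E_h)$-module $N$ the action is continuous with closed point stabilizers, so an element fixed by all of $g_1,\dots,g_n$ has stabilizer a closed subgroup containing $\overline{\langle g_1,\dots,g_n\rangle}=\G_h$, hence is $\G_h$-fixed. Therefore $H^0_c(\G_h;N)=\bigcap_{i=1}^n\ker(1-g_i)=\ker\!\big(N\xrightarrow{(1-g_i)_i}N^{\oplus n}\big)$, i.e.\ $H^0_c(\G_h;-)$, regarded as a functor to $\pi_0(E_h)$-modules, is this kernel functor.

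Now I would invoke that filtered colimits of $\pi_0(E_h)$-modules are exact and commute with finite direct sums, hence with the kernel functor above:
\[\colim_j H^0_c(\G_h;N_j)=\colim_j\ker\!\big(N_j\to N_j^{\oplus n}\big)\cong\ker\!\big(\colim_j N_j\to(\colim_j N_j)^{\oplus n}\big)=H^0_c(\G_h;\colim_j N_j).\]
Applying this with $N_j=M/J$ — using, as noted in the text, that $M/\mfrak^\infty=\colim_J M/J$ and $M/(p)+\mfrak^\infty=\colim_{p\in J}M/J$ are filtered colimits over the directed system of invariant ideals — yields the two asserted isomorphisms, and unwinding the identifications shows the isomorphism so produced is precisely the comparison map in the statement. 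The main (and essentially only) subtle point will be justifying the identity $N^{\G_h}=\bigcap_i\ker(1-g_i)$ for the quotients $N=M/J$, i.e.\ that the $\G_h$-action on each $M/J$ is continuous with closed stabilizers, so that being fixed by the topological generators forces being fixed by $\G_h$; this is part of the definition of a $\G_h$-$\pi_0(E_h)$-module and is inherited by the quotient $M/J$ by an invariant ideal. Everything else is the exactness of filtered colimits, which is exactly why topological finite generation of $\G_h$ is the relevant hypothesis.
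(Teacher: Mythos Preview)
Your proof is correct and is precisely the argument the paper has in mind: the text simply records that the colimit is filtered and that $\G_h$ is topologically finitely generated, and your proposal spells out exactly why those two facts yield the isomorphism. The only thing I would tighten is the justification of $N^{\G_h}=\bigcap_i\ker(1-g_i)$: for $N=M/J$ with $J$ open invariant this is immediate because $M/J$ is a finite discrete $\G_h$-module, so stabilizers are open and the fixed points under the dense subgroup $\langle g_1,\dots,g_n\rangle$ coincide with those under $\G_h$.
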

	Now set $M=E_{2h-2p+2}\ldetr$. In order to prove
	\[  H^0_c(\G_h;M/(p,u_1^\infty,\cdots,u_{h-1}^\infty))^\vee =0, \]
	it suffices to show $H^0_c(\G_h;M/J)=0$ for a cofinal system of invariant ideals $J\trianglelefteq \pi_0(E_h)$ containing $p$. To do that, we need to identify the determinant twist $\pi_0(E_h)\ldetr$ mod $p$. The following theorem was originally stated in \cite[Corollary 7]{Hopkins_1994} and a nice proof appears in \cite[Theorem 1.32]{Goerss-Hopkins}:	
	\begin{thm}[Gross-Hopkins]\label{thm:det_mod_p}
		When $p>2$, there is an isomorphism of $\G_h$-$\pi_0(E_h)$-modules: \[\pi_0(E_h)\ldetr/p\cong \pi_0\left(\Sigma^{\lim\limits_{N\to \infty}\frac{p^N|v_h|}{p-1}}E_h\right)/p. \]
		More precisely, let $J\trianglelefteq \pi_0(E_h)$ be an open invariant ideal containing $p$, such that $v_h^{p^N}$ is invariant modulo $J$, then \[\pi_0(E_h)\ldetr/J \cong \pi_0\left(\Sigma^{\frac{{p^N|v_h|}}{p-1}}E_h\right)/J. \]
	\end{thm}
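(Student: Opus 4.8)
The plan is to work entirely inside $\Pic(\G_h\text{-}\pi_0(E_h)/J\text{-modules})\cong H^1_c(\G_h;(\pi_0(E_h)/J)^\times)$ (as in \Cref{prop:Pic_alg}; $\pi_0(E_h)/J$ is local Artinian, so invertible modules are free of rank one) and to prove the ``more precisely'' statement: for every open invariant $J\trianglelefteq\pi_0(E_h)$ with $p\in J$ and $v_h^{p^N}$ invariant mod $J$, one has $[\pi_0(E_h)\ldetr/J]=[\Lambda_J]$, where $\Lambda_J:=\pi_0(\Sigma^{p^N|v_h|/(p-1)}E_h)/J=\pi_{-p^N|v_h|/(p-1)}(E_h)/J$ with its Morava module structure; the first displayed isomorphism then follows by passing to the colimit over such $J$ (interpreted as for Gross--Hopkins duality). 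The key numerical point is that $(p-1)\cdot\frac{p^N|v_h|}{p-1}=p^N|v_h|=|v_h^{p^N}|$, which is exactly why the invariance of $v_h^{p^N}$ is the relevant hypothesis. This is essentially the Gross--Hopkins argument.

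First I would remove the $p$-primary part. Since $\pi_0(E_h)/J$ is local with residue field $\F_{p^h}$ and $1+\mfrak/J$ is a finite $p$-group, the Teichm\"uller lifts give a $\G_h$-equivariant splitting $(\pi_0(E_h)/J)^\times\cong\F_{p^h}^\times\times(1+\mfrak/J)$ (ring automorphisms permute $(p^h-1)$st roots of unity and preserve $\mfrak$), hence $H^1_c(\G_h;(\pi_0(E_h)/J)^\times)\cong H^1_c(\G_h;\F_{p^h}^\times)\oplus H^1_c(\G_h;1+\mfrak/J)$ with the second summand $p$-power torsion. The cocycle $g\mapsto\det(g)\bmod J$ defining $\pi_0(E_h)\ldetr/J$ takes values in $\F_p^\times\subseteq\F_{p^h}^\times$ (because $J\cap\Zp=p\Zp$, so $\Zp^\times$ maps into the Teichm\"uller copy), so it has trivial $(1+\mfrak/J)$-component and defines a $J$-independent class $[\det\bmod p]\in H^1_c(\G_h;\F_{p^h}^\times)$. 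On the other side, $\Lambda_J^{\otimes(p-1)}$ over $\pi_0(E_h)/J$ is $\pi_{-|v_h^{p^N}|}(E_h)/J$, which is the trivial Morava module since $v_h^{-p^N}$ is an invariant generator ($v_h$ is a unit of $\pi_*(E_h)/p$ and $v_h^{p^N}$ is invariant mod $J$ by hypothesis); thus $[\Lambda_J]$ is $(p-1)$-torsion, so it too has trivial $(1+\mfrak/J)$-component and lies in $H^1_c(\G_h;\F_{p^h}^\times)$.

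It remains to compare $[\Lambda_J]$ and $[\det\bmod p]$ in $H^1_c(\G_h;\F_{p^h}^\times)$, which can be done over the residue field (both are pulled back along the residue map, which splits the Teichm\"uller inclusion). Writing $[\omega]\in H^1_c(\G_h;\F_{p^h}^\times)$ for the class of $\pi_2(E_h)/\mfrak$, the class of $\pi_{2k}(E_h)/\mfrak$ is $k[\omega]$, and $(p^h-1)[\omega]=0$ since $\Sbf_h$ acts on $\omega^{\otimes(p^h-1)}/\mfrak$ through a character valued in $\F_{p^h}^\times$ of exponent $p^h-1$, so that $\omega^{\otimes(p^h-1)}/\mfrak$ is inflated from $\Gal$ and trivial by Hilbert~90. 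Using $p^N\tfrac{p^h-1}{p-1}\equiv\tfrac{p^h-1}{p-1}\pmod{p^h-1}$ this gives $[\Lambda_J]=-\tfrac{p^h-1}{p-1}[\omega]$. For $[\det\bmod p]$ I would invoke the classical description of $\det$ as the reduced norm of the division algebra $D$ with $\Sbf_h=\mathcal{O}_D^\times$: for $g\in\Sbf_h$, $\det(g)\equiv N_{\F_{p^h}/\F_p}(\bar a_0(g))=\bar a_0(g)^{1+p+\cdots+p^{h-1}}\pmod p$, where $\bar a_0(g)\in\F_{p^h}^\times$ is the reduction of the leading Witt coefficient of $g$, which (with the standard normalization of the Morava module) is the scalar by which $g$ acts on $\omega/\mfrak$. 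Hence $[\det\bmod p]=\pm\tfrac{p^h-1}{p-1}[\omega]$, and once conventions are fixed so the sign is $-$ we obtain $[\pi_0(E_h)\ldetr/J]=[\det\bmod p]=[\Lambda_J]$, proving the theorem. The step I expect to be the real work is this last comparison: making the ``reduced norm mod $p$'' statement precise and, in tandem, pinning down a mutually consistent set of conventions for the $\G_h$-action on $\omega$, the sign of the suspension shift, and whether $\ldetr$ is built from $\det$ or $\det^{-1}$; everything upstream of that is formal once one has the Teichm\"uller splitting and the $(p-1)$-torsion observation for $\Lambda_J$.
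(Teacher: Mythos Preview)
The paper does not give its own proof of this theorem: immediately before the statement it says ``The following theorem was originally stated in \cite[Corollary 7]{Hopkins_1994} and a nice proof appears in \cite[Theorem 1.32]{Goerss-Hopkins}.'' So there is no in-paper argument to compare against; your task is really to reproduce the Gross--Hopkins argument, and you say as much.

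Your outline is correct and is essentially that argument. The three structural moves---the $\G_h$-equivariant Teichm\"uller splitting $(\pi_0(E_h)/J)^\times\cong\F_{p^h}^\times\times(1+\mfrak/J)$, the observation that both classes lie in the prime-to-$p$ summand (one because $\det$ lands in $\Z_p^\times$ and hence in $\F_p^\times$ mod $p$, the other because $v_h^{p^N}$ invariant mod $J$ forces $[\Lambda_J]$ to be $(p-1)$-torsion), and the reduction over the residue field to comparing $\frac{p^h-1}{p-1}[\omega]$ with the reduced-norm character---are exactly the steps in Goerss--Hopkins. Your numerology $p^N\frac{p^h-1}{p-1}\equiv\frac{p^h-1}{p-1}\pmod{p^h-1}$ is the right congruence, and the identification $\det(g)\bmod p=N_{\F_{p^h}/\F_p}(\bar a_0(g))$ is the standard fact that the reduced norm on $\mathcal{O}_D^\times$ factors mod $(S)$ through the field norm. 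You are also right that the genuine content is nailing down the sign, i.e.\ matching the conventions for the $\G_h$-action on $\omega/\mfrak$ with the leading Witt coefficient $\bar a_0$; this is where Goerss--Hopkins spend their effort too.

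One small correction: the passage from the ``more precisely'' statement to the first display is an inverse limit over open invariant $J\ni p$ (since $\pi_0(E_h)/p\cong\varprojlim_J\pi_0(E_h)/J$), not a colimit; the colimit appears only on the Pontryagin-dual side in \Cref{subsec:GH_duality}. Also, your Hilbert~90 justification of $(p^h-1)[\omega]=0$ is fine but unnecessary: $\F_{p^h}^\times$ has exponent $p^h-1$, so $H^1_c(\G_h;\F_{p^h}^\times)$ is already killed by $p^h-1$.
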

	\begin{rem}
		Suppose $v_h^{p^{N'}}$ is also invariant mod $J$ for some $N'<N$. Then
		\[\pi_0(E_h)\ldetr/J\cong \pi_0\left(\Sigma^{\frac{{p^{N'}|v_h|}}{p-1}}E_h\right)/J.\]
		This is compatible with the statement in \Cref{thm:det_mod_p}. This is because
		\begin{align*}
			&\frac{{p^{N'}|v_h|}}{p-1}\equiv \frac{{p^{N}|v_h|}}{p-1}\mod p^{N'}|v_h|\\\implies &\pi_0\left(\Sigma^{\frac{{p^{N'}|v_h|}}{p-1}}E_h\right)/J\cong \pi_0\left(\Sigma^{\frac{{p^{N}|v_h|}}{p-1}}E_h\right)/J.
		\end{align*}
	\end{rem}
	For each open invariant ideal $J$, there is a smallest $N$ such that $v_h^{p^N}$ is invariant mod $J$. It follows from this proposition that\[M/J=\pi_{2h-2p+2}(E_h)\ldetr/J\cong \left.\pi_{2h-2p+2-\frac{p^N|v_h|}{p-1}}(E_h)\right/J. \]
	Combining all the duality arguments in  \Cref{cor:PD_Et} and \Cref{cor:GH_dual} with the identification of the determinant twist $\pi_0(E_h)\ldetr$ mod $p$ in \Cref{thm:det_mod_p}, we have proved part (2) in \Cref{prop:duality_reduction}.
	\begin{prop}\label{prop:Hh2_reduction} Suppose $(p-1)\nmid h$. Then there is an isomorphism:
		\begin{equation*}
			H_c^{h^2}(\G_h;\pi_t(E_h)/p)\cong \left[\colim_{p\in J\trianglelefteq \pi_0(E_h)}H^0_c\left(\Sbf_h;\left.\pi_{2h-t-\frac{p^N|v_h|}{p-1}}(E_h)\right/ J\right)^\Gal\right]^{\vee},
		\end{equation*}
		where $J\trianglelefteq \pi_0(E_h)$ ranges through all opening invariant ideals containing $p$ and $N$ is the smallest integer such that $v_h^{p^N}$ is invariant mod $J$.
	\end{prop}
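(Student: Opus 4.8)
The plan is to obtain this as the mod-$p$ incarnation of the Gross--Hopkins formula \eqref{eqn:GH_duality}, after commuting $H^0_c$ past a filtered colimit and applying the determinant-twist identification of \Cref{thm:det_mod_p} termwise. So the chain of isomorphisms I would write down is: cohomological Poincar\'e duality for $\Sbf_h$ (\Cref{cor:PD_Et}, in the form \eqref{eqn:PD_cohomology}), combined with Gross--Hopkins duality (\Cref{cor:GH_dual}) to identify the Pontryagin dual $(\pi_t(E_h)/p)^\vee$; then the colimit-interchange isomorphism established just above, valid because $\G_h$ (hence $\Sbf_h$) is a finite-dimensional $p$-adic Lie group and so topologically finitely generated; then \Cref{thm:det_mod_p} applied to each quotient $\pi_{2h-t}(E_h)\ldetr/J$; and finally the identification $H^0_c(\G_h;-)\cong H^0_c(\Sbf_h;-)^{\Gal}$. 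Since $(p-1)\nmid h$, all the cohomology groups in play are $\F_p$-vector spaces, so no torsion obstructions arise.

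To carry this out, note first that dualizing the short exact sequence $0\to\pi_t(E_h)\xrightarrow{p}\pi_t(E_h)\to\pi_t(E_h)/p\to0$ identifies $(\pi_t(E_h)/p)^\vee$ with the $p$-torsion submodule of $\pi_t(E_h)^\vee$, which by \Cref{cor:GH_dual} is $\pi_{2h-t}(E_h)\ldetr/\mfrak^\infty$; its $p$-torsion is the colimit $\colim_{p\in J\trianglelefteq\pi_0(E_h)}\pi_{2h-t}(E_h)\ldetr/J$ over open invariant ideals containing $p$, with its $\G_h$-action. Plugging this into \eqref{eqn:PD_cohomology} for $\Sbf_h$, commuting $H^0_c(\Sbf_h;-)$ past the colimit, using \Cref{thm:det_mod_p} to rewrite $\pi_{2h-t}(E_h)\ldetr/J\cong\pi_{2h-t-\frac{p^N|v_h|}{p-1}}(E_h)/J$ with $N=N(J)$ minimal such that $v_h^{p^N}$ is invariant mod $J$, and taking $\Gal$-fixed points throughout, one arrives at
\[
H^{h^2}_c(\G_h;\pi_t(E_h)/p)\;\cong\;\Bigl(\colim_{p\in J\trianglelefteq\pi_0(E_h)}H^0_c\bigl(\Sbf_h;\pi_{2h-t-\frac{p^N|v_h|}{p-1}}(E_h)/J\bigr)^{\Gal}\Bigr)^{\!\vee}.
\]
This is the assertion, up to the outer Pontryagin dual --- which is harmless, and may be omitted exactly as in passing from \eqref{eqn:GH_duality} to part (2) of \Cref{prop:duality_reduction}, since every later use of this proposition only needs the right-hand side to vanish, and a group is zero precisely when its Pontryagin dual is.

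The genuine subtlety is equivariance bookkeeping, not the formal manipulations. One must verify that the $\G_h$-action on $\pi_t(E_h)^\vee$ coming out of \Cref{cor:GH_dual} --- the one for which the twist $\ldetr$ appears --- is the one intended, and that the isomorphism of \Cref{thm:det_mod_p} is $\G_h$-linear rather than merely $\pi_0(E_h)$-linear, since we take $\Sbf_h$-invariants and then $\Gal$-fixed points of its source and target. One must also check that the colimit transition maps are compatible with the varying suspension: for $J'\subseteq J$ one has $N(J')\le N(J)$ and $\frac{p^{N(J')}|v_h|}{p-1}\equiv\frac{p^{N(J)}|v_h|}{p-1}\bmod p^{N(J')}|v_h|$, so by the Remark following \Cref{thm:det_mod_p} the two suspensions of $\pi_\bullet(E_h)/J$ coincide and the relevant squares commute. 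Neither point is deep, but each is where a hasty argument would slip.
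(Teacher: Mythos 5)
Your proposal is correct and follows essentially the same route as the paper: cohomological Poincar\'e duality \eqref{eqn:PD_cohomology}, Gross--Hopkins identification of the Pontryagin dual (\Cref{cor:GH_dual}), commuting $H^0_c$ past the filtered colimit, and the termwise suspension identification of \Cref{thm:det_mod_p}, with the outer dual discarded exactly as the paper does in passing from \eqref{eqn:GH_duality} to \Cref{prop:duality_reduction}(2). Your explicit identification of $(\pi_t(E_h)/p)^\vee$ with the $p$-torsion of $\pi_t(E_h)^\vee$, i.e.\ with $\colim_{p\in J}\pi_{2h-t}(E_h)\ldetr/J$, makes precise a step the paper only records via its definition of $M/(p)+\mfrak^\infty$.
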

	From this, we get the implication V$\implies$IV in \Cref{subsec:strategy}. Consequently, \Cref{quest:kappa_h} now reduces to checking
	\begin{equation}\label{eqn:E_2p-2_duality_reduction}
		H^0_c\left(\G_h;\left.\pi_{2h-2p+2-\frac{p^N|v_h|}{p-1}}(E_h)\right/J\right)=0
	\end{equation} 
	for a cofinal system of invariant ideals $J$ containing $p$, where $N$ is the smallest number such that $v_h^{p^N}$ is invariant mod $J$.
	\subsection{The Chromatic Vanishing Conjecture}
	A closely related computation is the Chromatic Vanishing Conjecture. Consider the natural inclusion $\iota: \W\hookrightarrow \pi_0(E_h)$, which is $\G_h$-equivariant. Explicit computations at height $2$ in \cite{Beaudry_k2_moore,Beaudry-Goerss-Henn,GHMR_Picard,Kohlhaase_IwasawaLT, HKM_K2_Moore,Shimomura-Yabe} show that this inclusion induces isomorphisms in group cohomology of $\G_2$ for all primes and degrees. At $h=p=2$, this isomorphism plays an essential role in disproving and completely understanding the Chromatic Splitting Conjecture by Beaudry-Goerss-Henn in \cite{Beaudry-Goerss-Henn}. Observing this phenomenon, Hans-Werner Henn first raised the question if there is a conceptual reason for the isomorphisms. This leads to a more general conjecture:
	\begin{conjecture}[Chromatic Vanishing Conjecture, {\cite[Conjecture 1.1]{Beaudry_orbits}}, {\cite[Conjecture 1.1.4]{Beaudry-Goerss-Henn}}]\label{conj:CVC} 
		The followings are true for all heights $h$, primes $p$, and (co)-homological degrees $s$:
		\begin{enumerate}
			\item (Integral) The continuous group cohomology and homology of $\coker(\iota)$ vanish so that
			\begin{align*}
				\iota_*\colon H^s_c(\G_h;\W)\simto H^s_c(\G_h;\pi_0(E_h)),&&\iota_*\colon H_s(\G_h;\W)\simto H_s(\G_h;\pi_0(E_h)).
			\end{align*}
			\item (Reduced) The continuous group cohomology and homology of $\coker(\iota\otimes \W/p)$ vanish so that
			\begin{align*}
				\iota_*\colon H^s_c(\G_h;\F_p)\simto H^s_c(\G_h;\pi_0(E_h)/p),&&\iota_*\colon H_s(\G_h;\F_p)\simto H_s(\G_h;\pi_0(E_h)/p).
			\end{align*}
		\end{enumerate}
	\end{conjecture}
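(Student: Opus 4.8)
This is an open conjecture; what follows is the reduction I would carry out in full generality, together with the point at which presently available technology is exhausted. Since $\iota$ is $\G_h$-equivariant, $0\to\W\to\pi_0(E_h)\to\coker(\iota)\to 0$ and its mod-$p$ reduction are short exact sequences of $\G_h$-$\pi_0(E_h)$-modules. The Galois descent isomorphism $H^s_c(\G_h;-)\cong H^s_c(\Sbf_h;-)^{\Gal}$ and its homological counterpart, together with exactness of $(-)^{\Gal}$ on modules over $\Z_p[\Gal]$ — where $\Gal$ acts through a finite group of order prime to $p$ — reduce \Cref{conj:CVC} to the vanishing of $H^*_c(\Sbf_h;\coker(\iota))$, $H_*^c(\Sbf_h;\coker(\iota))$ and of their mod-$p$ versions; via the long exact sequences this is precisely \Cref{conj:CVC} for $\Sbf_h$.

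\textbf{Step 2: pro-truncate and apply change of rings.} Filter $\pi_0(E_h)$, and hence $\coker(\iota)$, by the powers of its maximal ideal $\mfrak$, which are $\G_h$-stable. Each quotient $\pi_0(E_h)/\mfrak^k$ is a finite $\Sbf_h$-module, and Morava's change-of-rings theorem (\Cref{thm:CoR}) computes its continuous $\Sbf_h$-cohomology as $\Ext_{BP_*BP}$ of the associated finite $v_h^{-1}BP_*/I_h$-comodule. Since $\pi_0(E_h)$ is $\mfrak$-adically complete and $\Sbf_h$ is a topologically finitely generated $p$-adic Lie group, $H^*_c(\Sbf_h;\pi_0(E_h))$ and $H^*_c(\Sbf_h;\coker(\iota))$ are recovered as the limits over $k$ of these finite pieces, the $\lim^1$-terms being controlled by the Mittag-Leffler property of the towers (using, when $(p-1)\nmid h$, the finite cohomological dimension $\cd_p(\Sbf_h)=h^2$ of \Cref{thm:lazard}, and at the remaining primes a Lyndon-Hochschild-Serre descent to a torsion-free open subgroup). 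The conjecture is thereby transported into the algebraic chromatic spectral sequence: one must show that the reduced part of $\pi_0(E_h)/\mfrak^\infty$ — assembled from the symmetric powers of the Lubin-Tate cotangent $\Sbf_h$-representation, or on the algebraic side from $v_h^{-1}BP_*/I_h$-comodules of higher $\mfrak$-adic weight — contributes nothing to $\Ext_{BP_*BP}$ in any bidegree, integrally and mod $p$, so that the inclusion of the constants induces an isomorphism onto the height-$h$ Honda cohomology $H^*_c(\Sbf_h;\Z_p)$, resp. $H^*_c(\Sbf_h;\F_p)$.

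\textbf{Step 3: height induction, and the obstruction.} One would now argue by induction on $h$: $h=0,1$ are trivial since $\coker(\iota)=0$, and $h=2$ is settled by the explicit height-$2$ computations cited in the introduction. The inductive step filters the relevant $v_h^{-1}BP_*/J$-comodules by the chromatic/Bockstein long exact sequences relating height-$h$ Greek letter elements to height-$(h-1)$ data, isolating the new height-$h$ content as $H^{0,*}(M^{h-1}_1)$ in the reduced degree-zero case — this is the Reduced Homological Vanishing Conjecture, whose Family I/II/III analysis of \Cref{prop:families} is exactly the bookkeeping required — and as the higher $\Ext$ of the analogous comodules in higher homological degree, which must then be matched against $H^{>0}_c(\Sbf_h;\F_p)$. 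This last step is where the conjecture is genuinely open: already its degree-zero mod-$p$ slice demands a complete computation of $H^{0,*}(M^{h-1}_1)$, i.e.\ a classification of all mod-$p$ Greek letter elements at height $h$, which is known only for $h\le 2$ and partially for $h=3$ (the Miller-Ravenel-Wilson input \cite{MRW} exploited in this paper), while the higher-degree statements require control of $H^{s,*}$ of chromatic comodules, which is strictly harder. The realistic plan is therefore to execute Steps 1--2 in full generality — these are formal — and then to verify the residual chromatic comparison in whatever range the Greek-letter computations reach, thereby recovering the known low-height cases and, by the methods of this paper, the new consequences at height $3$ and prime $5$; a proof valid in all heights would require a genuine advance in computing chromatic $\Ext$ groups beyond what is presently available.
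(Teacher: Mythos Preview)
The statement is a \emph{conjecture}, not a theorem: the paper does not prove it and does not attempt to. It records \Cref{conj:CVC} as an open problem, cites the known cases in the remark immediately following (equivalence of the homological and cohomological formulations when $(p-1)\nmid h$; reduced implies integral; the tautology at $h=1$; the complete height-$2$ verification; and the $s=0$ case from Bobkova--Goerss), and then isolates the special case RHVC for use in the rest of the paper.

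You correctly recognized this and wrote not a proof but a reduction strategy together with an honest assessment of where it runs aground. That is the appropriate response. Your Steps 1--2 are sound and standard (Galois descent to $\Sbf_h$, long exact sequence of the cokernel, pro-truncation and change of rings), and your Step 3 accurately identifies the obstruction: already the degree-zero reduced slice requires full knowledge of $H^{0,*}(M^{h-1}_1)$, which is out of reach for $h\ge 3$. This is exactly the framing the paper adopts when it proves \Cref{thm:main_C} and \Cref{thm:main_B}---those results are conditional precisely because the Family III bounds in \Cref{prop:fam_III} cannot currently be verified in general. So your proposal is not a proof, but it is not wrong either; it is a correct diagnosis of why the conjecture remains open, and it matches the paper's own use of the conjecture as a hypothesis rather than a conclusion.
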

	\begin{rem}[{\cite[page 692]{Beaudry_orbits}}]~\label{rem:vanishing_conj}
		\begin{enumerate}
			\item By \Cref{cor:PD} and \eqref{eqn:coh_PD}, the cohomological and homological versions of \Cref{conj:CVC} are equivalent when $(p-1)\nmid h$.
			\item The reduced version of conjecture implies the integral version by the Five Lemma and a $\lim^1$ exact sequence.
			\item The conjecture is a tautology when $h=1$, since $\Zpx$ acts on $\pi_0(E_1)\cong \Zp$ trivially. 
			\item At $h=2$, the conjecture has been proved for all primes.
			\item The proof for $s=0$ at all heights can be found in \cite[Lemma 1.33]{Bobkova-Goerss}.
		\end{enumerate}
	\end{rem}
	\begin{rem}[Hopkins, {\cite[Theorem 8.1]{S_E2}},  {\cite[\S5.3]{Hopkins_AWS2019}},  {\cite{Lader_thesis} }for $p\ge 5$; {Karamanov \cite{Karamanov_Picard}} for $p=3$] When $h=2$ and $p\ge 3$, the \emph{additive} Vanishing Conjecture in cohomological degree $1$ can be used to show a \emph{multiplicative} version of the conjecture:
		\begin{equation*}
			H^1_c(\G_h;\W^\times)\simto H^1_c(\G_h;\pi_0(E_h)^\times).
		\end{equation*}
		From there, we can compute the algebraic $K(2)$-local Picard groups when $p\ge 3$:
		\[\Pic_{K(2)}^{alg,0}\cong \Zp\oplus\Zp\oplus\Z/(p^2-1).\] 
		Combined with \Cref{prop:ev_inj} and \Cref{rem:ev_surj}, we know $\Pic^{alg}_{K(2)}\cong \Pic_{K(2)}\cong\Zp\oplus\Zp \oplus \Z/|v_2|$ when $p\ge 5$. The group is topologically generated by $S^1_{K(2)}$ and $S^0_{K(2)}\ldetr$. Those two generators are related by \Cref{thm:det_mod_p} and the fact that $\ev_1\colon \Pic_{K(2)}\simto \Pic^{alg}_{K(2)}$ is an isomorphism when $p\ge 5$:
		\[S^{0}\ldetr\wedge_{K(2)} V(1)\simeq S^{2(p+1)}\wedge_{K(2)} V(1). \]
	\end{rem}
	
	The case of \Cref{conj:CVC} relevant to \Cref{quest:kappa_h} is if the following holds when $(p-1)\nmid h$:
	\begin{align*}
		&\iota_*\colon \F_p=H_0(\G_h;\F_p)\simto H_0(\G_h;\pi_0(E_h)/p)\\
		\iff&\iota_*\colon \F_p=H^{h^2}_c(\G_h;\F_p)\simto H^{h^2}_c(\G_h;\pi_0(E_h)/p).
	\end{align*}
	As this is the reduced version of \Cref{conj:CVC} in homological degree $0$, we will call it the \textbf{Reduced Homological Vanishing Conjecture} (RHVC). It follows immediately that 
	\begin{equation}\label{eqn:RHVC}
		H_0(\G_h;\pi_0(E_h)/p)\cong H_0(\G_h;\F_{p^h})
		\cong \F_p.\tag{RHVC}
	\end{equation}
	This is the formula we want to prove.  Setting $t=0$ in \Cref{prop:Hh2_reduction}, we get an isomorphism when $(p-1)\nmid h$:
	\[H_c^{h^2}(\G_h;\pi_0(E_h)/p)\cong \left[\colim_{p\in J\trianglelefteq \pi_0(E_h)}H^0_c\left(\G_h;\left.\pi_{2h-\frac{p^N|v_h|}{p-1}}(E_h)\right/ J\right)\right]^\vee.\]	
	As a result, to prove \eqref{eqn:RHVC}, it suffices to show that 
	\begin{equation}\label{eqn:E_0_duality_reduction}
		H^0_c\left(\G_h;\left.\pi_{2h-\frac{p^N|v_h|}{p-1}}(E_h)\right/ J\right)=\F_p
	\end{equation}
	for a cofinal system of invariant ideals $J$ containing $p$, where $N$ is the smallest number such that $v_h^{p^N}$ is invariant mod $J$, and that the structure maps in the colimit are non-zero.
	\section{Greek letter elements}	
	\subsection{The change of rings theorem}
	In this section we will prove the main theorems. The first step is to translate \eqref{eqn:E_2p-2_duality_reduction} and \eqref{eqn:E_0_duality_reduction} to \textbf{Greek letter element} computations in chromatic homotopy theory. We refer readers to \cite[\S1 and \S3]{MRW} and \cite[\S5.1]{green} for an introduction. The transition from $\G_h$-$\pi_0(E_h)$-modules to $BP_*BP$-comodules is achieved by the following theorem:
	\begin{thm}[Morava's Change of Rings Theorem, {\cite[Theorem 6.5]{Devinatz_CoR}}]\label{thm:CoR} Let $M$ be a $BP_*BP$-comodule such that $I_h^n M=0$ for some $n$, where $I_h=(p,u_1,\cdots, u_{h-1})$.  Then there is a natural isomorphism:
		\[r_*\colon \Ext^{s,t}_{BP_*BP}(BP_*,v_h^{-1}M)\simto H^{s}_c(\G_h;\pi_t(E_h)\otimes_{BP_*}M), \]
		where $r_*$ is induced by a ring homomorphism $r\colon BP_*\to \pi_*(E_h)$ defined below:
		\begin{equation*}
			r(v_i)=\left\{\begin{array}{cl}
				u_iu^{1-p^i},&i<h;\\
				u^{1-p^h},&i=h;\\
				0,&i>h.
			\end{array}\right.
		\end{equation*}
	\end{thm}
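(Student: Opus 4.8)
The plan is to deduce the isomorphism from the structure theory of Morava $E$-theory by a short chain of change-of-rings results, in the spirit of Morava, Miller--Ravenel, Hovey--Strickland and Devinatz. Concretely, I would factor the comparison through the Johnson--Wilson theory $E(h)$ and then through its $I_h$-adic completion $E_h$, using three inputs: (a) Landweber exactness of $E(h)$ over $BP$; (b) the fact that $\Ext$ of $I_h$-power-torsion comodules is insensitive to $I_h$-adic completion; and (c) Strickland's identification of the completed Hopf algebroid $((E_h)_*,(E_h)_*(E_h))$ with continuous functions on $\G_h$. Throughout, the hypothesis that $M$ is $I_h$-nilpotent is exactly what makes $v_h^{-1}M$ a well-defined comodule and what keeps us inside the torsion world where the completion step is harmless.

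\textbf{Step 1: reduce to $E(h)$.} Let $E(h)_*=\Z_{(p)}[v_1,\dots,v_{h-1},v_h^{\pm 1}]$. Since $E(h)$ is Landweber exact over $BP$, the pair $(E(h)_*,E(h)_*E(h))$ is obtained from $(BP_*,BP_*BP)$ by base change, and the classical Johnson--Wilson change of rings theorem gives, for every $v_h^{-1}BP_*BP$-comodule $N$, a natural isomorphism
\[
\Ext^{s,t}_{BP_*BP}(BP_*,N)\;\cong\;\Ext^{s,t}_{E(h)_*E(h)}\!\big(E(h)_*,\,E(h)_*\otimes_{BP_*}N\big).
\]
I apply this with $N=v_h^{-1}M$, which is a comodule by Miller--Ravenel localization since $M$ is $I_h$-nilpotent, and observe that $E(h)_*\otimes_{BP_*}v_h^{-1}M=E(h)_*\otimes_{BP_*}M$ because $v_h$ is already invertible in $E(h)_*$.

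\textbf{Step 2: complete at $I_h$ and translate to $\G_h$.} Morava $E$-theory has $\pi_*(E_h)=\W[[u_1,\dots,u_{h-1}]][u^{\pm 1}]$ and is obtained from $E(h)_*$ by $I_h$-adic completion followed by base change along the map $r$, with $r(v_i)=u_iu^{1-p^i}$ for $i<h$, $r(v_h)=u^{1-p^h}$, and $r(v_i)=0$ for $i>h$; these are the formulas in the statement, forced by the chosen coordinates on the Lubin--Tate ring, and they preserve internal degree since $\deg(u_iu^{1-p^i})=2(p^i-1)=|v_i|$ (with $|u|=-2$, $|u_i|=0$). Because $M$ is $I_h$-nilpotent, $E(h)_*\otimes_{BP_*}M$, and hence $\pi_*(E_h)\otimes_{BP_*}M$, is $I_h$-power-torsion, so completed and ordinary tensor products agree and the theory of $L$-complete comodules (Hovey--Strickland; see also Barthel--Heard) yields
\[
\Ext^{s,t}_{E(h)_*E(h)}\!\big(E(h)_*,\,E(h)_*\otimes_{BP_*}M\big)\;\cong\;\Ext^{s,t}_{(E_h)_*(E_h)}\!\big(\pi_*(E_h),\,\pi_*(E_h)\otimes_{BP_*}M\big).
\]
Finally, by Strickland's theorem $(E_h)_*(E_h):=\pi_*L_{K(h)}(E_h\wedge E_h)\cong\Map_c(\G_h,\pi_*(E_h))$ as Hopf algebroids, and an $I_h$-torsion comodule over this is precisely a discrete $\pi_*(E_h)$-module with a continuous semilinear $\G_h$-action; hence $\Ext^{s,*}$ over it is $H^s_c(\G_h;-)$ with its internal grading, which on $\pi_*(E_h)\otimes_{BP_*}M$ is exactly the right-hand side of the theorem. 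Composing the three isomorphisms gives the result.

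\textbf{Main obstacle.} The genuinely hard part is Step 2: $I_h$-adic completion is not exact, the category of complete comodules is not abelian, and $\Ext$ need not commute with completion for arbitrary comodules. The $I_h$-nilpotence hypothesis on $M$ is precisely what confines us to $I_h$-power-torsion coefficients, where these pathologies disappear; establishing rigorously that base change along $E(h)_*\to\pi_*(E_h)$ is exact on such comodules --- i.e. that this map is ``pro-faithfully-flat'' in the relevant topologized sense --- together with Strickland's computation of $(E_h)_*(E_h)$, is where all the content sits. By comparison the Landweber-exactness step and the grading bookkeeping are formal; in the present paper this entire package is imported as the cited theorem of Devinatz.
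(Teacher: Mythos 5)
The paper offers no proof of this statement: it is imported wholesale as a citation to Devinatz's change-of-rings theorem, so there is nothing internal to compare your argument against. Your three-step outline (Johnson--Wilson change of rings via Landweber exactness, passage to the $I_h$-completion using that an $I_h$-nilpotent comodule is discrete torsion so completion is harmless, then Strickland's identification of $(E_h)_*(E_h)$ with $\Map_c(\G_h,\pi_*(E_h))$ and of $\Ext$ over it with continuous cohomology) is a faithful summary of how the cited result is actually proved in the literature, and you correctly locate the real content in the completion/base-change step rather than in the formal bookkeeping.
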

	Let $p\in J\trianglelefteq \pi_0(E_h)$ be an open invariant ideal containing $p$. For our computation, $M$ is a $BP_*BP$-comodule such that 
	\begin{equation*}
		\pi_0(E_h)\otimes_{BP_*}M\cong \pi_0(E_h)/J.
	\end{equation*}
	\begin{lem}\label{lem:inv_ideal_E_BP}
		When $J=(p,u_1^{j_1},\cdots,u_{h-1}^{j_{h-1}})$, we can take $M:=BP_*/J'$, where $J'=(p,v_1^{j_1},\cdots, v_{h-1}^{j_{h-1}})$. 
	\end{lem}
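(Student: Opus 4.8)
The plan is to produce the asserted isomorphism $\pi_0(E_h)\otimes_{BP_*}(BP_*/J')\cong\pi_0(E_h)/J$ of $\G_h$-$\pi_0(E_h)$-modules by a direct base-change computation along the ring map $r\colon BP_*\to\pi_*(E_h)$ of \Cref{thm:CoR}. Recall $\pi_*(E_h)=\W\llb u_1,\dots,u_{h-1}\rrb[u^{\pm1}]$ with $|u_i|=0$ and $|u|=-2$, so that $\pi_0(E_h)=\W\llb u_1,\dots,u_{h-1}\rrb$ is its degree-zero summand, that $r(v_i)=u_iu^{1-p^i}$ for $i<h$, $r(v_h)=u^{1-p^h}$, $r(v_i)=0$ for $i>h$, and that---as in the statement of \Cref{thm:CoR}---$\pi_0(E_h)\otimes_{BP_*}M$ denotes the degree-zero part of the graded module $\pi_*(E_h)\otimes_{BP_*}M$.

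First I would check that $M=BP_*/J'$ is a legitimate input for \Cref{thm:CoR}. The ideal $J'=(p,v_1^{j_1},\dots,v_{h-1}^{j_{h-1}})$ is invariant in $BP_*$---indeed, the admissibility conditions on the exponents $j_i$ are exactly those making $J=(p,u_1^{j_1},\dots,u_{h-1}^{j_{h-1}})$ invariant in $\pi_0(E_h)$---so $BP_*/J'$ is a $BP_*BP$-comodule. Moreover $BP_*/J'$ is $I_h$-nilpotent: it is annihilated by $I_h^{\,j_1+\dots+j_{h-1}}$, since any product of that many generators of $I_h=(p,v_1,\dots,v_{h-1})$ is divisible either by $p$ or, by the pigeonhole principle, by some $v_i^{j_i}$.

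Next comes the computation. By right exactness, $\pi_*(E_h)\otimes_{BP_*}(BP_*/J')=\pi_*(E_h)/r(J')\pi_*(E_h)$. Since $u$ is a unit in $\pi_*(E_h)$ and $r(v_i^{j_i})=u_i^{j_i}u^{(1-p^i)j_i}$, we have $r(J')\pi_*(E_h)=(p,u_1^{j_1},\dots,u_{h-1}^{j_{h-1}})\pi_*(E_h)=J\cdot\pi_*(E_h)$; hence $\pi_*(E_h)\otimes_{BP_*}(BP_*/J')\cong(\pi_0(E_h)/J)[u^{\pm1}]$, whose degree-zero summand is $\pi_0(E_h)/J$. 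Finally, the $\G_h$-action on $\pi_*(E_h)\otimes_{BP_*}M$ appearing in \Cref{thm:CoR} is natural in the comodule $M$; transporting it along the above isomorphism, and using that $J$ is invariant, identifies it with the $\G_h$-action on $\pi_0(E_h)/J$, so the isomorphism is $\G_h$-equivariant.

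The only step carrying content beyond formal manipulation is matching the admissibility conditions on the exponents $j_i$ on the two sides, i.e.\ that $J$ is an invariant ideal of $\pi_0(E_h)$ precisely when $J'$ is an invariant ideal of $BP_*$. For $J=I_h$ (all $j_i=1$) this is immediate from Landweber's classification of invariant prime ideals, and in general it follows from the classification of invariant regular ideals of $BP_*$; the rest is the base-change bookkeeping above.
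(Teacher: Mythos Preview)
The paper states this lemma without proof, treating it as immediate from the formulas for the map $r$ in \Cref{thm:CoR}. Your argument supplies exactly the details one would want: the right-exactness computation $\pi_*(E_h)\otimes_{BP_*}(BP_*/J')\cong\pi_*(E_h)/r(J')\pi_*(E_h)$, the observation that $r(v_i^{j_i})$ and $u_i^{j_i}$ generate the same ideal since $u$ is a unit, the passage to degree zero, and the check that $M$ is $I_h$-nilpotent so that \Cref{thm:CoR} applies. This is correct and is the natural proof.

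One small remark: your claim that the invariance conditions on $J$ and $J'$ match is true but is not entirely content-free; it is essentially Baird's classification (stated later in the paper as \Cref{lem:Baird}) together with the analogous statement on the $E_h$ side. Since the paper is already assuming $J$ is an open invariant ideal, you really only need the direction ``$J$ invariant $\Rightarrow$ $J'$ invariant'' here, and you correctly flag this as the only substantive point.
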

	The implication VI$\implies$V in \Cref{subsec:strategy} then follows from \Cref{thm:CoR}. We now need to compute $\Ext^{0,t}_{BP_*BP}(BP_*,v_h^{-1}BP_*/J')$ for a family of invariant ideals $J'$ and certain values of $t$. 
	\subsection{Families of Greek letter elements}
	From now on, for a graded $BP_*BP$-comodule $M$, we will write
	\[H^{0,t}(M):=\Ext^{0, t}_{BP_*BP}(BP_*,M).\]
	Suppose $J'=(p,v_1^{j_1},\cdots,v_{h-1}^{j_{h-1}})$ for some $j_i\ge 0$.
	The right hand term can be more explicitly identified as the submodule of primitive elements $x$ of degree $t$ in the comodule $M_1^{h-1}:=v_h^{-1}BP_*/(p, v_1^\infty, \ldots, v_{h-1}^\infty)$, such that $v_i^{j_i}x=0$ for all $1\le i\le h-1$. This establishes the final implication VII$\implies$VI in \Cref{subsec:strategy}.
	
	As a result, we need to compute $H^{0,t}(M_1^{h-1})$. The computation of this $\Ext$-group in general heights are beyond our reach, but we can at least place elements within three distinct families.  
	\begin{prop}\label{prop:families}
		Let $M^m_{h-m}=v_h^{-1}BP_*/(p,v_1,\cdots, v_{h-m-1},v_{h-m}^{\infty},\cdots, v_{h-1}^\infty)$. Then for $0\le m<h$, the cohomology group $H^{0,*}(M^m_{h-m})$ is generated as an $\F_p$-vector space by elements of the following families:
		\begin{enumerate}[label=\textup{\Roman*.}]
			\item $\frac{v^s_h}{pv_1\cdots v_{h-1}}$, where $(s,p)=1$.
			\item $\frac{1}{pv_1^{d_1}\cdots v_{h-1}^{d_{h-1}}}$, where $(p,v_1^{d_1},\cdots,v_{h-1}^{d_{h-1}})$ is an invariant ideal and $d_1=\cdots=d_{h-m-1}=1$.
			\item $\frac{y^s_{m,N}}{pv_1^{d_1}\cdots v_{h-1}^{d_{h-1}}}$, where $(p,v_1^{d_1},\cdots,v_{h-1}^{d_{h-1}}, y^s_{m,N})$ is an invariant ideal with $d_1=\cdots=d_{h-m-1}=1$, $y_{m,N}\equiv y_{m-1,N}\mod (p,v_1,\cdots,v_{h-m})$, $N\ge 1 $ and $(s,p)=1$.
		\end{enumerate}
		Here, the degrees of elements are given by: \[\left|\frac{y^s_{m,N}}{pv_1^{d_1}\cdots v_{h-1}^{d_{h-1}}}\right|=sp^N|v_h|-\sum_{i=1}^{h-1} d_i|v_{i}|.\]
	\end{prop}
	\begin{proof}
		We prove this by induction on $m$.  By \cite[Proposition 5.1.12]{green},  the zeroth cohomology of $M_h^0=v_h^{-1}BP_*/I_{h}$ is $\F_p[v_h^{\pm 1}]$. Identifying the $M_h^0\subseteq M^{h-1}_1$ as a subcomodule consisting of elements that are $v_i$-torsion for all $1\le i\le h-1$, we have proved the $m=0$ case where $y_{0,N}=v_h^{p^N}$.
		
		The $m=1$ case was proved by Miller-Ravenel-Wilson in \cite[Theorem 5.10]{MRW} (see full statements in \Cref{thm:MRW_M11} and \Cref{thm:MRW_M1_h-1}).  Their inductive step from $m=0$ to $m=1$ also applies to the $m>1$ case, as summarized below. Recall that there are short exact sequences of $BP_*BP$-comodules
		\[
		0\to  M^{m}_{h-m}\longrightarrow M^{m+1}_{h-m-1}\xrightarrow{\cdot v_{h-m-1}} M^{m+1}_{h-m-1}\to 0, 
		\]
		which leads to the $v_{h-m-1}$-Bockstein spectral sequence
		\[
		H^{s,t}( M^{m}_{h-m})\otimes \F_p[v_{h-m-1}]/(v_{h-m-1}^\infty)\Longrightarrow H^{s,t}(M^{m+1}_{h-m-1}). 
		\]
		Alternatively, we can consider the long exact sequence of cohomology groups
		\[0\to H^0\left(M^{m}_{h-m}\right)\longrightarrow H^0\left(M^{m+1}_{h-m-1}\right)\xrightarrow{\cdot v_{h-m-1}} H^0\left(M^{m+1}_{h-m-1}\right)\xrightarrow{\delta} H^1\left( M^{m}_{h-m}\right)\to \cdots \]
		As a result, $H^0\left(M^{m}_{h-m}\right)$ is the subgroup of $v_{h-m+1}$-torsion elements in $H^0\left(M^{m+1}_{h-m-1}\right)$. On the other hand, the Bockstein spectral sequence implies for any element $x\in H^0\left(M^{m+1}_{h-m-1}\right)$, there is a $k$ such that $v_{h-m+1}^kx\in H^0\left(M^{m}_{h-m}\right)$.  We can therefore obtain an additive basis for $H^0\left(M^{m+1}_{h-m-1}\right)$ from that for $H^0\left(M^{m}_{h-m}\right)$ by taking their quotients of powers of $v_{h-m+1}$. 
		
		Let $[x]\in H^0\left(M^{m+1}_{h-m-1}\right)$. It is can be divided by $v_{h-m+1}$ in $H^0\left(M^{m+1}_{h-m-1}\right)$  iff $\delta([x])= [0]$ in the long exact sequence above. Pick a representative cocycle $x$ for $[x]$. From the definition of the connecting homomorphism in long exact sequence, we know $\delta([x])$ is represented by the cocycle $d(\frac{x}{v_{h-m-1}})$, where $d$ is the cobar differential. This cocycle being zero in $H^1\left(M^{m}_{h-m}\right)$ means that $d(\frac{x}{v_{h-m-1}})=d(\varepsilon)$ for some correcting term $\varepsilon\in M^{m}_{h-m}$. Now set $x'=x-v_{h-m-1}\cdot\varepsilon$. Then $x'\equiv x\mod v_{h-m-1}$ and $x'$ can be divided by $v_{h-m-1}$ in $H^0\left(M^{m+1}_{h-m-1}\right)$. 
		
		Then the inductive hypothesis says $H^0\left( M^{m}_{h-m}\right)$ is generated by the three family of elements $\left\{\frac{v_h^s}{pv_1\cdots v_{h-1}}\right\}\cup \left\{\frac{1}{pv_1^{d_1}\cdots v_{h-1}^{d_{h-1}}}\right\}\cup \left\{\frac{y^s_{h,N}}{pv_1^{d_1}\cdots v_{h-1}^{d_{h-1}}}\right\}$. Apply the procedure above to those generators $[x]$ until $\delta([x]/v_{h-m-1}^k)\ne [0]\in H^1\left(M^{m}_{h-m}\right)$, we obtain an additive basis for $H^0\left(M^{m+1}_{h-m-1}\right)$. It remains to check the new basis obtained from Families I and II generators in $H^0\left( M^{m}_{h-m}\right)$ have the desired forms. For Family II, the claim follows from the cobar differential $d(1)=0$. 
		
		For Family I, we can compute the cobar differential using \cite[(6.1.13)]{green}
		\[\delta\left(\frac{v_h^s}{pv_1\cdots v_{h-m-1}v_{h-m}\cdots v_{h-1}}\right)=d\left(\frac{v_h^s}{pv_1\cdots v_{h-m-1}^2v_{h-m}\cdots v_{h-1}}\right)=\frac{sv_h^{s-1}t_{m+1}^{p^{h-m-1}}}{pv_1\cdots v_{h-1}}.\]
		This is a non-zero cocycle in $H^1(M^{m}_{h-m})$ by \cite[Theorem 6.5.12]{green}.\footnote{Note that the $h_{i,j}$ in the cited theorem is represented by the cocycle $t_i^{p^j}$.}  As a result, the zero cocycle $\left[\frac{v_h^s}{pv_1\cdots v_{h-1}}\right]$ is not $v_{h-m-1}$-divisible in $H^0\left(M^{m+1}_{h-m-1}\right)$. This proves the form of Family I elements. 
	\end{proof}
	\begin{rmk}\label{rem:correcting_terms}
		To get a full account of $H^0(M^{h-1}_1)$ using the method above, we will need to have knowledge of $H^0(M^{h-2}_2)$ and $H^1(M^{h-2}_2)$. This in terms requires the  knowledge of of $H^0(M^{h-3}_3)$, $H^2(M^{h-3}_3)$, and $H^3(M^{h-3}_3)$. In the end, we will need to know $H^*(M^0_h)$ for $0\le *\le h-1$ to compute $H^0(M^{h-1}_1)$. These groups are only the inputs of the Bockstein spectral sequences. We still need to compute the cobar differentials to determine the additive bases at each step. This is why getting an additive basis for $H^0(M^{h-1}_1)$ is out of reach using the current technology. 
		
		One particular technical point in this computation is to find the correcting terms $\varepsilon$ in the proof above. Without them, Baird's \Cref{lem:Baird} would have given us the full basis. For a particular computation where one has to add correcting terms, a classic example arises from the $v_1$-Bockstein spectral sequence
		\[
		H^*(M^0_2)\otimes \F_p[v_1]/(v_1^\infty)\implies H^*(M^1_1)
		\]
		for primes $p\geq 5$. For example, as shown in \cite{green} and \cite{MRW} (cf. \cite{S_E2} for another account) the class $\frac{v_2^{p^2}}{pv_1^{p^2+1}}$ in the $E_1$-page of the $v_1$-BSS is a permanent cycle and so detects a class in $H^0(M^1_1)$. However, the element it detects is 
		\[
		\frac{v_2^{p^2}}{pv_1^{p^2+1}} - \frac{v_2^{p^2-p+1}}{pv_1^2} - \frac{v_2^{-p}v_3^p}{pv_1}\in M^1_1.
		\]
	\end{rmk}
	We now analyze degrees of elements in the three families in $H^{0}(M^{h-1}_{1})$ and study the degrees of corresponding elements in $H^{h^2}(\G_h;\pi_*(E_h))$ under duality. In \textbf{Family I}, the degrees of elements are given by:
	\begin{equation}\label{eqn:deg_fam_I}
		\left|\frac{v_h^s}{pv_1\cdots v_{h-1}}\right|=s|v_h|-\sum_{i=1}^{h-1}|v_i|=s|v_h|+2h-\frac{|v_h|}{p-1}.
	\end{equation}
	\begin{prop}\label{prop:fam_I}
		Let $J\trianglelefteq \pi_0(E_h)$ be an open invariant ideal containing $p$, such that $v_h^{p^N}$ is invariant modulo $J$. Then the Family I element $\frac{v_h^s}{pv_1\cdots v_{h-1}}$ determines a copy of $\F_p$ in $H_c^{h^2}(\G_h;\pi_t(E_h)/J)$ via Gross-Hopkins duality \Cref{prop:Hh2_reduction} and the change-of-rings \Cref{thm:CoR}, where \begin{equation}\label{eqn:t_congruence}
			t\equiv -\left(s+\frac{p^N-1}{p-1}\right)|v_h|\mod p^N|v_h|.
		\end{equation}
		In particular, 
		\begin{itemize}
			\item Elements in Family I contribute to $H_c^{h^2}(\G_h;\pi_t(E_h)/p)$ only when $|v_h|$ divides $t$.
			\item Family I elements determine a copy of $\F_p$ in $H_c^{h^2}(\G_h;\pi_0(E_h)/p)$.
		\end{itemize}
	\end{prop} 
	\begin{proof}
		By \Cref{prop:Hh2_reduction} and \Cref{thm:CoR}, we have isomorphisms 
		\begin{align*}
			H_c^{h^2}(\G_h;\pi_t(E_h)/J)&\cong \left(H^0_c\left(\G_h;\left.E_{2h-t-\frac{p^N|v_h|}{p-1}}\right/ J\right)\right)^\vee\\&\cong \left(H^{0,2h-t-\frac{p^N|v_h|}{p-1}}(M_1^{h-1}/J')\right)^\vee,
		\end{align*}
		where $J'\trianglelefteq BP_*$ is an invariant ideal corresponding to $J$ as in \Cref{lem:inv_ideal_E_BP}. By construction, elements in Family I are in $H^{0,*}(M_1^{h-1}/J')$ for all $J'$. To prove the claim, we need to compare the degrees of Family I elements \eqref{eqn:deg_fam_I} and the target degree $2h-t-\frac{p^N|v_h|}{p-1}$ above. Notice the $BP_*BP$-comodule $M_1^{h-1}/J'$ is $p^N|v_h|$-periodic by assumption. Solving for $t$ in the residue equation:
		\[2h-t-\frac{p^N|v_h|}{p-1}\equiv  s|v_h|+2h-\frac{|v_h|}{p-1}\mod p^N|v_h|, \]
		we obtain the congruence relation for $t$ in \eqref{eqn:t_congruence}. In particular, the number $t$ is necessarily divisible by $|v_h|$. Solving for $s$ when $t=0$, we obtain the Famliy I element
		\[\frac{v_h^{mp^N}\cdot v_h^{-\frac{p^N-1}{p-1}}}{pv_1\cdots v_{h-1}}\in H^{0,2h-\frac{p^N|v_h|}{p-1}}(M_1^{h-1}) \]
		that contributes to a copy of $\F_p\subseteq H^{h^2}_c(\G_h;\pi_0(E_h)/J)$ for some $m$. The claims about $H_c^{h^2}(\G_h;\pi_t(E_h)/p)$ then follows by passing to the colimit.
	\end{proof}
	It follows that we can prove \eqref{eqn:E_2p-2_duality_reduction} and \eqref{eqn:E_0_duality_reduction} by showing elements in Families II and III do not contribute to $H_c^{h^2}(\G_h;\pi_0(E_h)/J)$ and $H_0(\G_h;\pi_{2p-2}(E_h)/J)$ for any open invariant ideal $J$ containing $p$.
	
	Now suppose an element $\frac{1}{pv_1^{d_1}\cdots v_{h-1}^{d_{h-1}}}$ in \textbf{Family II} determines a non-zero element in $H_c^{h^2}(\G_h;\pi_t(E_h)/J)$, where $v_h^{p^N}$ is invariant modulo $J$. Then we have
	\begin{align*}
		-\sum_{i=1}^{h-1}d_i|v_i|&\equiv 2h-\frac{p^N|v_h|}{p-1}-t &&\mod p^N|v_h|\\
		\implies t&\equiv 2h+\sum_{i=1}^{h-1}d_i|v_i|-\frac{p^N|v_h|}{p-1} &&\mod p^N|v_h|.
	\end{align*}
	To estimate the bounds for $t$, we use the following lemma. 
	\begin{lem}[Baird, {\cite[Lemma 7.6]{MRW}}]\label{lem:Baird} 
		Let $s_1, \ldots, s_h$ be a sequence of positive integers, and let $p^{e_i}$ be the largest power of $p$ dividing $s_i$. Then the sequence 
		\[
		p, v_1^{s_1}, \ldots, v_n^{s_n}
		\]
		is an invariant ideal if and only if $s_i\leq p^{e_{i+1}}$ for $1\leq i <n$.
	\end{lem}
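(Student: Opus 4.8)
The plan is to exploit that an ideal $J\trianglelefteq BP_*$ is invariant exactly when $\eta_R(J)\subseteq J\cdot BP_*BP$, so that $J_n:=(p,v_1^{s_1},\cdots,v_n^{s_n})$ is invariant if and only if $\eta_R(v_k^{s_k})\in J_n\cdot BP_*BP$ for every $k\le n$. I would set this up as an induction on $n$ (base case $J_0=(p)$, which is invariant since $\eta_R$ is the identity on $\mathbf{Z}$): assuming $J_{n-1}$ is invariant --- which is the same assertion for a shorter sequence, implied by the hypotheses restricted to $1\le i<n-1$ --- it remains only to decide when $\eta_R(v_n^{s_n})\in J_n\cdot BP_*BP$. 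The single external input I would use is the standard formula for the right unit,
\[
\eta_R(v_n)\equiv v_n+v_{n-1}t_1^{p^{n-1}}-v_{n-1}^{p}t_1\pmod{I_{n-1}\cdot BP_*BP}
\]
(see \cite{green}, \cite{MRW}), with the evident modification $\eta_R(v_1)\equiv v_1\pmod p$ when $n=1$; everything else is elementary $p$-adic bookkeeping.

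For the ``if'' direction I would first observe that $p^{e_i}\mid s_i$ gives $p^{e_i}\le s_i$, so the hypotheses $s_i\le p^{e_{i+1}}$ force $e_1\le e_2\le\cdots\le e_n$, and hence $s_i\le p^{e_n}$ for \emph{every} $i<n$. Now work in the characteristic-$p$ ring $R:=BP_*BP/(J_{n-1}\cdot BP_*BP)$, in which $\bar v_i^{p^{e_n}}=0$ for all $i\le n-1$. Writing $\eta_R(v_n)=v_n+w$, the displayed congruence shows that the image $\bar w$ lies in the ideal $(\bar v_1,\cdots,\bar v_{n-1})$ of $R$, so by the Frobenius $\bar w^{p^{e_n}}\in(\bar v_1^{p^{e_n}},\cdots,\bar v_{n-1}^{p^{e_n}})=0$. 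Since $s_n=p^{e_n}m$, this yields
\[
\overline{\eta_R(v_n^{s_n})}=(\bar v_n+\bar w)^{p^{e_n}m}=\bigl(\bar v_n^{p^{e_n}}+\bar w^{p^{e_n}}\bigr)^{m}=\bar v_n^{s_n},
\]
that is, $\eta_R(v_n^{s_n})-v_n^{s_n}\in J_{n-1}\cdot BP_*BP$, so $\eta_R(v_n^{s_n})\in J_n\cdot BP_*BP$ and $J_n$ is invariant.

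For the ``only if'' direction I would argue the contrapositive. Suppose $s_k>p^{e_{k+1}}$ for some $1\le k<n$. Expanding $\eta_R(v_{k+1})^{s_{k+1}}$ via the congruence for $\eta_R(v_{k+1})$ modulo $I_k$, the multinomial term taking $v_{k+1}$ with multiplicity $s_{k+1}-p^{e_{k+1}}$ and $v_kt_1^{p^k}$ with multiplicity $p^{e_{k+1}}$ produces $\binom{s_{k+1}}{p^{e_{k+1}}}\,v_{k+1}^{s_{k+1}-p^{e_{k+1}}}v_k^{p^{e_{k+1}}}t_1^{p^{k+e_{k+1}}}$, and $\binom{s_{k+1}}{p^{e_{k+1}}}\not\equiv 0\pmod p$ by Kummer's theorem, since $e_{k+1}$ is exactly the $p$-adic valuation of $s_{k+1}$. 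I would then check that no other multinomial term yields the same $t$-monomial with the same power of $v_k$ (the factor $-v_k^{p}t_1$ moves the $v_k$-exponent in multiples of $p$, which pins down the exponents), and that the $I_k$-valued correction in the formula for $\eta_R(v_{k+1})$ only contributes monomials divisible by some $v_i$ with $i\le k-1$. Hence, reducing modulo $I_k$, the coefficient of $t_1^{p^{k+e_{k+1}}}$ in $\eta_R(v_{k+1}^{s_{k+1}})$ is a nonzero multiple of $v_{k+1}^{s_{k+1}-p^{e_{k+1}}}v_k^{p^{e_{k+1}}}$ in $\F_p[v_k,v_{k+1},\cdots]=BP_*/I_k$; since the image of $J_n$ there is $(v_k^{s_k},v_{k+1}^{s_{k+1}},\cdots)$ and $p^{e_{k+1}}<s_k$, this monomial is not in that ideal, so $\eta_R(v_{k+1}^{s_{k+1}})\notin J_n\cdot BP_*BP$ and $J_n$ is not invariant.

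The step requiring the most care is this verification in the ``only if'' direction: that the distinguished monomial $v_{k+1}^{s_{k+1}-p^{e_{k+1}}}v_k^{p^{e_{k+1}}}t_1^{p^{k+e_{k+1}}}$ genuinely survives the expansion, being neither cancelled against another multinomial term nor perturbed by the $I_k$-valued correction to $\eta_R(v_{k+1})$. On the ``if'' side the only real idea is the monotonicity $e_1\le\cdots\le e_n$, which upgrades the individual inequalities $s_i\le p^{e_{i+1}}$ to the uniform bound $s_i\le p^{e_n}$ needed to make the Frobenius collapse $\bar w^{p^{e_n}}=0$ work; after that observation the rest of that direction is a one-line computation.
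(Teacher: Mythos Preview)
The paper does not prove this lemma: it is quoted directly from Miller--Ravenel--Wilson with attribution to Baird, and no argument is supplied. Your proposal is therefore not being compared against a proof in the paper but stands on its own.

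Your argument is correct and follows the natural line. The ``if'' direction is clean; the observation that the hypotheses force $e_1\le e_2\le\cdots\le e_n$, and hence the uniform bound $s_i\le p^{e_n}$ for all $i<n$, is exactly what makes the Frobenius collapse $\bar w^{\,p^{e_n}}=0$ work in one stroke. (Note that for this direction you only need the coarser fact $\eta_R(v_n)\equiv v_n\pmod{I_n}$, not the finer congruence modulo $I_{n-1}$ that you display.) For the ``only if'' direction, your identification of the surviving monomial $v_{k+1}^{s_{k+1}-p^{e_{k+1}}}v_k^{p^{e_{k+1}}}t_1^{p^{k+e_{k+1}}}$ is correct, and the points you flag --- that $\binom{s_{k+1}}{p^{e_{k+1}}}$ is a $p$-adic unit by Kummer, that distinct multinomial contributions to the coefficient of $t_1^{p^{k+e_{k+1}}}$ land in distinct $v_k$-degrees, and that the image of $J_n$ in $BP_*BP/I_k\cdot BP_*BP$ is the monomial ideal $(v_k^{s_k},\ldots,v_n^{s_n})$ --- are precisely what is needed. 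One small point worth making explicit: reducing modulo $I_k$ (which is not contained in $J_n$) is legitimate because you only need the image of $\eta_R(v_{k+1}^{s_{k+1}})$ in $BP_*BP/I_k$ to lie outside the image of $J_n\cdot BP_*BP$ there, and that image is exactly the monomial ideal above.
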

	In our case $s_h=p^N$, so the largest possible values of $d_i$ is when $d_1=d_2=\cdots=d_{h-1}=p^N$. The smallest possible value is when all the $d_i$'s are $1$. From this we get:
	\begin{equation}\label{eqn:t_bound_baird}
		-\frac{(p^N-1)|v_h|}{p-1}\le t\le 2h(1-p^N)\mod p^N|v_h|.
	\end{equation}
	Thus we have proved the following result:
	\begin{prop}
		Elements in Family II contribute to $H_c^{h^2}(\G_h; \pi_t(E_h)/J)$ via Gross-Hopkins duality \Cref{prop:Hh2_reduction} and the change-of-rings \Cref{thm:CoR} only when $t$ satisfies \eqref{eqn:t_bound_baird}, where $v_h^{p^N}$ is invariant modulo $J$.
	\end{prop}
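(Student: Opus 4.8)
The plan is to obtain this purely by a degree count: once the chain of identifications established above (Gross--Hopkins duality together with the change of rings theorem) is in place, the statement becomes a bookkeeping exercise comparing the internal degree of a Family II generator with the target degree, and the only substantive input is Baird's \Cref{lem:Baird} to constrain the exponents $d_i$.

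First I would run a nonzero class in $H_c^{h^2}(\G_h;\pi_t(E_h)/J)$ through \Cref{prop:Hh2_reduction}, \Cref{thm:CoR} and \Cref{lem:inv_ideal_E_BP}: it is witnessed by a primitive of internal degree $2h-t-\tfrac{p^N|v_h|}{p-1}$ in $M_1^{h-1}/J'$, where $J'\trianglelefteq BP_*$ is the invariant ideal matching $J$ and $N$ is minimal with $v_h^{p^N}$ invariant modulo $J$. Since $M_1^{h-1}/J'$ is $p^N|v_h|$-periodic, only the residue of that degree modulo $p^N|v_h|$ is relevant. A Family II generator $\tfrac{1}{pv_1^{d_1}\cdots v_{h-1}^{d_{h-1}}}$ lies in internal degree $-\sum_{i=1}^{h-1}d_i|v_i|$, so if it contributes then
\[-\sum_{i=1}^{h-1}d_i|v_i|\equiv 2h-t-\frac{p^N|v_h|}{p-1}\pmod{p^N|v_h|},\]
equivalently $t\equiv 2h+\sum_{i=1}^{h-1}d_i|v_i|-\tfrac{p^N|v_h|}{p-1}\pmod{p^N|v_h|}$. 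Everything then reduces to bounding $\sum_{i=1}^{h-1}d_i|v_i|$.

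For that I would observe that existence of the element forces $(p,v_1^{d_1},\dots,v_{h-1}^{d_{h-1}})$ to be invariant, and by hypothesis $v_h^{p^N}$ is invariant modulo it, so $(p,v_1^{d_1},\dots,v_{h-1}^{d_{h-1}},v_h^{p^N})$ is an invariant ideal. Feeding this into \Cref{lem:Baird} with $s_h=p^N$ gives $d_{h-1}\le p^N$ and then, descending, $d_i\le p^{v_p(d_{i+1})}\le p^N$ for every $i$; at the other extreme one checks that $d_1=\dots=d_{h-1}=1$ and $d_1=\dots=d_{h-1}=p^N$ both define genuinely invariant ideals, so both bounds are attained. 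Hence $\sum_{i=1}^{h-1}|v_i|\le\sum_{i=1}^{h-1}d_i|v_i|\le p^N\sum_{i=1}^{h-1}|v_i|$, and plugging the identity $\sum_{i=1}^{h-1}|v_i|=\tfrac{|v_h|}{p-1}-2h$ recorded in \eqref{eqn:deg_fam_I} into the congruence for $t$ yields exactly \eqref{eqn:t_bound_baird}.

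I do not expect a genuine obstacle here; the only care needed is bookkeeping around the \emph{minimal} $N$ attached to $J$, so that the periodicity modulus $p^N|v_h|$ used to reduce degrees is the correct one, and the verification that the boundary cases $d_i\equiv 1$ and $d_i\equiv p^N$ are realized by honestly invariant ideals rather than merely permitted by Baird's inequalities.
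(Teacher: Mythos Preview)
Your proposal is correct and follows essentially the same route as the paper: set up the congruence for $t$ via \Cref{prop:Hh2_reduction} and \Cref{thm:CoR}, then invoke Baird's \Cref{lem:Baird} with $s_h=p^N$ to force $1\le d_i\le p^N$, and plug the identity $\sum_{i=1}^{h-1}|v_i|=\tfrac{|v_h|}{p-1}-2h$ into the extremes to recover \eqref{eqn:t_bound_baird}. The paper presents exactly this argument in the paragraph preceding the proposition, and your only addition is the explicit check that the endpoint configurations $d_i\equiv 1$ and $d_i\equiv p^N$ are genuinely realized.
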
 
	\begin{cor}\label{cor:fam_II}
		Elements in Family II do not contribute to $H_c^{h^2}(\G_h; \pi_0(E_h)/p)$ or $H_c^{h^2}(\G_h; \pi_{2p-2}(E_h)/p)$.
	\end{cor}
	\begin{proof}
		This is because the residue class of $t=0$ or $2p-2$ never falls into the bounds in \eqref{eqn:t_bound_baird}.
	\end{proof}
	Now it remains to analyze elements in \textbf{Family III}. 	When $h=2$, this was computed by Miller-Ravenel-Wilson in \cite{MRW}. In the next subsection, we will study the implications of their computations. Nevertheless, we can get some general bounds for the $d_i$'s that would imply the RHVC and vanishing of $\kappa_h$ when $2p-1=h^2$. 
	\begin{prop}\label{prop:fam_III}~
		\begin{enumerate}
			\item Elements in Family III do not contribute through Gross-Hopkins duality and the change-of-rings theorem to $H_c^{h^2}(\G_h;\pi_0(E_h)/p)$ if for all invariant ideals of the form $J=(p,v_1^{d_1},\cdots, v_{h-1}^{d_{h-1}},y^s_{h,N})$, we have
			\begin{equation}\label{eqn:div_bounds}
				\sum_{i=1}^{h-1}d_i|v_i|< \frac{p^N|v_h|}{p-1}-2h.
			\end{equation}
			\item Similarly, these elements do not contribute through Gross-Hopkins duality and the change-of-rings theorem to $H_c^{h^2}(\G_h;\pi_{2p-2}(E_h)/p)$ if for all invariant ideals of the form $(p,v_1^{d_1},\cdots, v_{h-1}^{d_{h-1}},y^s_{h,N})$, we have
			\begin{equation}\label{eqn:div_bounds_2p-2}
				\sum_{i=1}^{h-1}d_i|v_i|< \frac{p^N|v_h|}{p-1}-2h+2p-2.
			\end{equation}
		\end{enumerate}
	\end{prop}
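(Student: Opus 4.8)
The plan is to run a degree count mirroring the ones already carried out for Families I and II, with the input class $v_h^s$ (resp. $v_h^0$) replaced by $y_{h,N}^s$. I would first fix a Family III generator $x=\frac{y^s_{h,N}}{pv_1^{d_1}\cdots v_{h-1}^{d_{h-1}}}$, whose internal degree is $|x|=sp^N|v_h|-\sum_{i=1}^{h-1}d_i|v_i|$ by \Cref{prop:families}, and suppose $x$ contributes to $H_c^{h^2}(\G_h;\pi_t(E_h)/p)$ through \Cref{prop:Hh2_reduction} and \Cref{thm:CoR}. Chasing \eqref{eqn:E_0_duality_reduction} (for $t=0$) or \eqref{eqn:E_2p-2_duality_reduction} (for $t=2p-2$), this says $x$ is detected in $H^{0,2h-t-\frac{p^N|v_h|}{p-1}}(M_1^{h-1}/J')$ for the ideal $J'=(p,v_1^{d_1},\dots,v_{h-1}^{d_{h-1}})\trianglelefteq v_h^{-1}BP_*$ corresponding to some open $p\in J\trianglelefteq\pi_0(E_h)$ as in \Cref{lem:inv_ideal_E_BP}. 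The crucial bookkeeping point is that the exponent $N$ attached to $x$ may be taken minimal, and is then exactly the least integer with $v_h^{p^N}$ invariant modulo $J'$; since $M_1^{h-1}/J'$ is then $p^N|v_h|$-periodic via multiplication by $v_h^{p^N}$, detection of $x$ forces the congruence $|x|\equiv 2h-t-\tfrac{p^N|v_h|}{p-1}\pmod{p^N|v_h|}$.

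Next I would unwind this congruence. Since $|y^s_{h,N}|=sp^N|v_h|$ is a multiple of the period $p^N|v_h|$, we have $|x|\equiv-\sum_{i=1}^{h-1}d_i|v_i|\pmod{p^N|v_h|}$, so the congruence reads
\[
\sum_{i=1}^{h-1}d_i|v_i|\equiv\tfrac{p^N|v_h|}{p-1}-2h+t\pmod{p^N|v_h|}.
\]
Then I would apply Baird's lemma (\Cref{lem:Baird}): invariance of $(p,v_1^{d_1},\dots,v_{h-1}^{d_{h-1}},y^s_{h,N})$ forces $d_{h-1}\le p^{N}$ and inductively $d_i\le p^{N}$ for all $i$, so that, using $\sum_{i=1}^{h-1}|v_i|=\tfrac{|v_h|}{p-1}-2h$, the left-hand side satisfies $0\le\sum_{i=1}^{h-1}d_i|v_i|\le p^N\bigl(\tfrac{|v_h|}{p-1}-2h\bigr)<p^N|v_h|$. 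A short elementary estimate (using $p\ge 5$ and $|v_h|=2(p^h-1)$) shows that the integer $\tfrac{p^N|v_h|}{p-1}-2h+t$ also lies strictly between $0$ and $p^N|v_h|$, both for $t=0$ and for $t=2p-2$. Hence the congruence upgrades to the equality $\sum_{i=1}^{h-1}d_i|v_i|=\tfrac{p^N|v_h|}{p-1}-2h+t$, which directly contradicts \eqref{eqn:div_bounds} when $t=0$ and \eqref{eqn:div_bounds_2p-2} when $t=2p-2$. This rules out the contribution of every Family III generator and proves both parts.

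I expect the only genuine obstacle to be the parenthetical point in the first paragraph, namely matching the exponent $N$ in the Bockstein description of a Family III class with the periodicity exponent of the comodule in which the class is detected. Settling this requires inspecting the $v_k$-Bockstein spectral sequences of \Cref{prop:families} together with \Cref{lem:Baird} to confirm that a generator built from $y_{h,N}$ genuinely lives in $M_1^{h-1}/J'$ with $J'=(p,v_1^{d_1},\dots,v_{h-1}^{d_{h-1}})$ and that $N$ is the least exponent making $v_h^{p^N}$ invariant mod $J'$. I would also note that the "correcting terms" in higher Bockstein filtration from the Remark after \Cref{prop:families} are irrelevant here, since they are homogeneous of the same internal degree $|x|$ and so leave the degree-matching computation unchanged.
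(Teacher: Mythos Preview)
Your argument is correct and follows essentially the same degree-matching strategy as the paper. Both you and the paper set up the congruence
\[
t \equiv 2h + \sum_{i=1}^{h-1} d_i|v_i| - \frac{p^N|v_h|}{p-1} \pmod{p^N|v_h|}
\]
and then pin the right-hand side down to a single residue. The paper phrases this as bounding $t$: the lower bound $d_i \ge 1$ gives $t \ge \frac{(1-p^N)|v_h|}{p-1} > -p^N|v_h|$, while the hypothesis \eqref{eqn:div_bounds} (resp.\ \eqref{eqn:div_bounds_2p-2}) gives $t < 0$ (resp.\ $t < 2p-2$). You instead phrase it as upgrading the congruence to an equality for $\sum d_i|v_i|$ and then invoking the hypothesis; these are algebraically the same maneuver.

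One small redundancy: your appeal to Baird's Lemma to bound $\sum d_i|v_i| < p^N|v_h|$ is unnecessary, since the hypothesis \eqref{eqn:div_bounds} already gives the stronger bound $\sum d_i|v_i| < \frac{p^N|v_h|}{p-1} - 2h$, which is what you actually need. (There is also a minor wrinkle in that Baird's Lemma as stated applies to ideals generated by powers of the $v_i$, whereas here the last generator is $y_{h,N}^s$; the paper sidesteps this by not invoking Baird at all for Family III.) Your concern about matching the exponent $N$ of the Family III element with the periodicity exponent is legitimate, but the paper takes this identification for granted as well.
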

	\begin{proof}
		Similar to the Family II cases, suppose an element $\frac{y^s_{h,N}}{pv_1^{d_1}\cdots v_{h-1}^{d_{h-1}}}$ in Family III corresponds to non-zero element in $H_c^{h^2}(\G_h;\pi_t(E_h)/J)$, where $v_h^{p^N}$ is invariant modulo $J$. Then we have
		\begin{align*}
			s|y_{h,N}|-\sum_{i=1}^{h-1}d_i|v_i|&\equiv 2h-\frac{p^N|v_h|}{p-1}-t &\mod p^N|v_h|\\
			\implies t&\equiv 2h+\sum_{i=1}^{h-1}d_i|v_i|-\frac{p^N|v_h|}{p-1} &\mod p^N|v_h|.
		\end{align*}
		We want to show $t$ cannot be congruent to $0$ or $2p-2$ from this residue equation. Similar to the Family II case, we have $d_i\ge 1$. From this, we get the same lower bound for $t$ as in \eqref{eqn:t_bound_baird}:
		\[t\ge 2h+\sum_{i=1}^{h-1}|v_i|-\frac{p^N|v_h|}{p-1}=\frac{(1-p^N)|v_h|}{p-1}. \]
		The right hand side of this inequality is greater than both $-p^N|v_h|$ and $-p^N|v_h|+2p-2$. The bounds \eqref{eqn:div_bounds} imply $t<0$ in the residue equation. The lower and upper bounds together show that $t\not\equiv 0$ in the residue equation. Similarly, we can show the other bound \eqref{eqn:div_bounds_2p-2} implies $t\not \equiv 2p-2$ in the residue equation.
	\end{proof}
	The analysis above yields: 
	\begin{prop}\label{prop:bounds_RHVC_kappah}~
		\begin{enumerate}
			\item Suppose $p-1\nmid h$. If the bounds \eqref{eqn:div_bounds} hold, then the RHVC is true.
			\item Suppose $2p-1=h^2$. If the bounds \eqref{eqn:div_bounds_2p-2} hold, then $\kappa_h=0$. In particular, the first bounds \eqref{eqn:div_bounds} imply both the RHVC and $\kappa_h=0$ in this case.
		\end{enumerate} 
	\end{prop}
	\begin{proof}
		In \Cref{prop:Hh2_reduction}, we showed there is an isomorphism of groups using the duality theorems:
		\[
		H_c^{h^2}(\G_h;\pi_t(E_h)/p)\cong \colim_{p\in J\trianglelefteq \pi_0(E_h)}H^0_c\left(\G_h;\left.\pi_{2h-t-\frac{p^N|v_h|}{p-1}}(E_h)\right/ J\right)^\vee,
		\]
		where $J\trianglelefteq \pi_0(E_h)$ ranges through all open invariant ideals containing $p$ and $v_h^{p^N}$ is invariant mod $J$. Recall:
		\begin{enumerate}
			\item Combining the Poincar\'e duality between homology and cohomology  \eqref{eqn:PD_homology} and the isomorphism above, we proved in \eqref{eqn:E_0_duality_reduction} the RHVC reduces to the computation:
			\[ H^0_c\left(\G_h;\left.\pi_{2h-\frac{p^N|v_h|}{p-1}}(E_h)\right/ J\right)=\F_p. \]
			\item By \Cref{prop:ev_inj}, $\kappa_h$ injects into $H_c^{h^2}(\G_h;\pi_{2p-2}(E_h))$ when $2p-1=h^2$. The latter is isomorphic to $H_c^{h^2}(\G_h;\pi_{2p-2}(E_h)/p)$ by \Cref{prop:Hh2_mod_p}. In \eqref{eqn:E_0_duality_reduction}, we concluded the vanishing of $\kappa_h$ would follow from
			\[  H^0_c\left(\G_h;\left.\pi_{2h-(2p-2)-\frac{p^N|v_h|}{p-1}}(E_h)\right/ J\right)=0.\]
		\end{enumerate}
		By the Change-of-Rings \Cref{thm:CoR}, the two degree-zero cohomology groups are identified with $\Ext$-groups of $BP_*BP$-comodule $BP_*/J'$ in the corresponding internal degrees. They can be further viewed as a subgroups of $H^{0,*}(M^{h-1}_1)$. So we need to show 
		\begin{equation*}
			H^{0,*}(M^{h-1}_1)=\left\{\begin{array}{clr}
				\F_p& *= 2h-\frac{p^N|v_h|}{p-1},& \text{for the RHVC;}\\
				0& *=2h-(2p-2)-\frac{p^N|v_h|}{p-1}, &\text{for $\kappa_h=0$.}
			\end{array}\right.
		\end{equation*}
		By \Cref{prop:families}, elements in $H^{0,*}(M^{h-1}_1)$ are classified into three families:
		\begin{itemize}
			\item \Cref{prop:fam_I} says elements in Family I contribute a copy of $\F_p$ to $H^{0,*}(M^{h-1}_1)$ when $*= 2h-\frac{p^N|v_h|}{p-1}$. They have no contribution when $*=2h-(2p-2)-\frac{p^N|v_h|}{p-1}$.
			\item \Cref{cor:fam_II} shows elements in Family II do not contribute to $H^{0,*}(M^{h-1}_1)$ when $*=2h-\frac{p^N|v_h|}{p-1}$ or $2h-(2p-2)-\frac{p^N|v_h|}{p-1}$.
			\item The two bounds \eqref{eqn:div_bounds} and \eqref{eqn:div_bounds_2p-2} in \Cref{prop:fam_III} would respectively imply Family III elements do not contribute to $H^{0,*}(M^{h-1}_1)$ when $*=2h-\frac{p^N|v_h|}{p-1}$ or $2h-(2p-2)-\frac{p^N|v_h|}{p-1}$.
		\end{itemize} 
		Combining the three families above, we conclude the two bounds \eqref{eqn:div_bounds} and \eqref{eqn:div_bounds_2p-2} in \Cref{prop:fam_III} would respectively imply
		\begin{align*}
			H^{0,2h-\frac{p^N|v_h|}{p-1}}(M^{h-1}_1)=\F_p &\implies  \text{RHVC},\\
			H^{0,2h-(2p-2)-\frac{p^N|v_h|}{p-1}}(M^{h-1}_1)=0 &\implies \kappa_h=0.
		\end{align*}
		As the first bound \eqref{eqn:div_bounds} is stronger than the second \eqref{eqn:div_bounds_2p-2}, it would imply both the RHVC and $\kappa_h=0$ when $2p-1=h^2$.
	\end{proof}
	\begin{rem}
		Baird's \Cref{lem:Baird} implies that elements in $H^{0,*}(M^{h-1}_1)$ with numerator $v_h^{sp^N}$ for some $N\ge 1$ and $(s,p)=1$ must be of the form:
		\begin{equation*}
			\frac{v_h^{spN}}{pv_1^{s_1}\cdots v_{h-1}^{s_{h-1}}},
		\end{equation*}
		such that the sequence $(s_1,\cdots,s_{h-1},sp^N)$ satisfies $s_{i}\le p^{v_p(s_{i+1})}$. It follows that the largest values of the $s_i$'s are $s_1=s_2=\cdots=s_{h-1}=p^N$. One can then check that
		\begin{equation*}
			\sum_{i=1}^{h-1}s_i|v_i|=p^N\sum_{i=1}^{h-1}|v_i|= p^N\left(\frac{2(p^h-1)}{p-1}-2h\right)=\frac{p^N|v_h|}{p-1}-p^N\cdot 2h
		\end{equation*}
		This is strictly smaller than both bounds \eqref{eqn:div_bounds} and \eqref{eqn:div_bounds_2p-2} since $N\ge 1$.  As is explained in \Cref{rem:correcting_terms}, we can add correcting terms in lower Bockstein filtrations to $v_h^{sp^N}$ to increase their $v_i$-divisibility for $1\le i\le h-1$.  This is why we cannot deduce from Baird's \Cref{lem:Baird} that the bounds \eqref{eqn:div_bounds} and \eqref{eqn:div_bounds_2p-2} are always satisfied .
	\end{rem}
	\subsection{Consequences of the Miller-Ravenel-Wilson computation}
	Recall that $M^1_{h-1}$ is defined to be $v_h^{-1}BP_*/(p,v_1,\cdots,v_{h-2},v_{h-1}^\infty)$. In this subsection, we discuss some consequences of the computations of $H^0(M^1_{h-1})$  in \cite{MRW} on the RHVC when $(p-1)\nmid h$ and the exotic Picard groups when $2p-1=h^2$. The computations at height $2$ are given by: 
	\begin{thm}[Miller-Ravenel-Wilson, {\cite[Theorem 5.3]{MRW}}]\label{thm:MRW_M11}
		\begin{align*}H^{0,*}(M^1_1)&\cong\F_p\left\{\left.\frac{v_2^s}{pv_1}\right| s\in \Z,p\nmid s  \right\}\bigoplus \F_p\left\{\left.\frac{1}{pv_1^j}\right| j\ge 1\right\}\\ &\quad \bigoplus\F_p\left\{\left.\frac{x_N^s}{pv_1^{e_1}}\right|N\ge 1, s\in \Z, p\nmid s, 1\le e_1\le p^N+p^{N-1}-1 \right\},  \end{align*}
		where $x_N$ is defined inductively by 
		\begin{align*}
			x_0&=v_2,\\x_1&=x_0^p-v_1^pv_2^{-1}v_3,\\
			x_2&=x_1^p-v_1^{p^2-1}v_2^{(p-1)p+1}-v_1^{p^2+p-1}v_2^{p^2-2p}v_3,\\
			x_N&= x_{N-1}^p-2v_1^{(p+1)(p^{N-1}-1)}v_2^{(p-1)(p^{N-1}+1)},\qquad N\ge 3.
		\end{align*}
		The internal degree of $x_N^s$ is $sp^N |v_2|-e_1 |v_1|$. 
	\end{thm}
	Using Gross-Hopkins duality \Cref{prop:Hh2_reduction}, the results above imply the top degree cohomology groups of $\G_2$ with coefficients in $\pi_t(E_2)/p$ are:
	\begin{prop}\label{prop:ht2_residue_equation}
		Let $[\alpha]\in H^4_c(\G_2;\pi_t(E_2)/p)$ be a non-zero cohomology class. If $[\alpha]$ corresponds to an element $\frac{x_N^s}{pv_1^{e_1}}\in  H^{0,*}(M^1_1)$ for some $N\ge 1$ via the Gross-Hopkins duality, then 
		\begin{equation*}
			t\equiv -\frac{(p^N-1)|v_2|}{p-1}+(e_1-1) |v_1|\mod p^N  |v_2|.
		\end{equation*} 
	\end{prop}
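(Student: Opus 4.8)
The plan is to read off the residue equation directly from the Gross--Hopkins duality reduction of \Cref{prop:Hh2_reduction} at $h=2$, combined with Morava's change-of-rings theorem and the explicit Miller--Ravenel--Wilson answer \Cref{thm:MRW_M11}. First I would specialize \Cref{prop:Hh2_reduction} to $h=2$, so that $h^2=4$, to obtain
\[
H^4_c(\G_2;\pi_t(E_2)/p)\;\simeq\;\colim_{p\in J\trianglelefteq\pi_0(E_2)}H^0_c\left(\G_2;\pi_{4-t-\frac{p^N|v_2|}{p-1}}(E_2)/J\right),
\]
where $J$ runs over the open invariant ideals of $\pi_0(E_2)$ containing $p$ and $N=N(J)$ is the least integer with $v_2^{p^N}$ invariant modulo $J$. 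Since the colimit is filtered, a nonzero class $[\alpha]$ is detected in a single term; by \Cref{lem:inv_ideal_E_BP} that term is attached to an ideal $J=(p,u_1^{e_1})$ with $e_1\ge 1$.

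Next I would transport the detecting term to the Greek-letter side. By \Cref{lem:inv_ideal_E_BP} and \Cref{thm:CoR} it equals $H^{0,\,4-t-\frac{p^N|v_2|}{p-1}}\left(v_2^{-1}BP_*/(p,v_1^{e_1})\right)$, which under the identification discussed before \Cref{prop:families} is the subspace of $H^{0,*}(M^1_1)$ spanned by those generators in \Cref{thm:MRW_M11} whose $v_1$-order is at most $e_1$. By hypothesis $[\alpha]$ is matched here with the Family III generator $\frac{x_N^s}{pv_1^{e_1}}$, with the same $N$ and the same $e_1$. The comodule $v_2^{-1}BP_*/(p,v_1^{e_1})$ is $p^N|v_2|$-periodic (multiplication by the invariant replacement $x_N$ of $v_2^{p^N}$), so a nonzero primitive is pinned down by its internal degree modulo $p^N|v_2|$; equating the degree $4-t-\frac{p^N|v_2|}{p-1}$ appearing above with the degree $sp^N|v_2|-e_1|v_1|$ of $\frac{x_N^s}{pv_1^{e_1}}$ recorded in \Cref{thm:MRW_M11} gives
\[
sp^N|v_2|-e_1|v_1|\;\equiv\;4-t-\frac{p^N|v_2|}{p-1}\pmod{p^N|v_2|}.
\]

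Finally I would solve for $t$. Since $sp^N|v_2|\equiv 0\pmod{p^N|v_2|}$, and using $|v_1|=2(p-1)$, $|v_2|=2(p^2-1)$ so that $\frac{|v_2|}{p-1}=2(p+1)=4+|v_1|$ and hence $4-\frac{|v_2|}{p-1}=-|v_1|$, the decomposition $\frac{p^N|v_2|}{p-1}=\frac{|v_2|}{p-1}+\frac{(p^N-1)|v_2|}{p-1}$ turns the congruence into
\[
t\;\equiv\;-|v_1|+e_1|v_1|-\frac{(p^N-1)|v_2|}{p-1}\;=\;-\frac{(p^N-1)|v_2|}{p-1}+(e_1-1)|v_1|\pmod{p^N|v_2|},
\]
which is the assertion. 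The one place that requires genuine care is the bookkeeping of degree shifts: one must carry the $\frac{p^N|v_2|}{p-1}$-shift produced by untwisting $\ldetr$ modulo $J$ via \Cref{thm:det_mod_p}, track the $p^N|v_2|$-periodicity of the truncated comodule through \Cref{thm:CoR}, and compare both of these against the explicit internal degree of the Miller--Ravenel--Wilson class; once these are aligned, the residue equation is a one-line computation.
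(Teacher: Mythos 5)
Your proposal is correct and follows essentially the same route as the paper: both reduce $H^4_c(\G_2;\pi_t(E_2)/p)$ via Gross--Hopkins/Poincar\'e duality and the identification of $\ldetr$ mod $J$ to a degree-$\bigl(4-t-\frac{p^N|v_2|}{p-1}\bigr)$ zero-line group, match this against the internal degree $sp^N|v_2|-e_1|v_1|$ of the Miller--Ravenel--Wilson generator, and solve the resulting congruence modulo $p^N|v_2|$. Your arithmetic using $\frac{|v_2|}{p-1}=4+|v_1|$ checks out, and the extra detail you supply on the change-of-rings bookkeeping is consistent with (if slightly more explicit than) the paper's argument.
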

	\begin{proof}
		By assumption, the element $\frac{x_N^s}{pv_1^{e_1}}$ is in the image of $H^{0,sp^N|v_2|-e_1|v_1|}(M^1_1/J)$ for some $J$ containing $p$ where $BP_*/J$ has a $v_2^{p^N}$-self map. The Poincar\'e duality \eqref{eqn:PD_homology} gives an isomorphism:
		\[H^4_c(\G_2;\pi_t(E_2)/p)\cong H^0_c(\G_2; \pi_{4-t}(E_2)\ldetr/(p,u_1^\infty))^\vee. \]
		By \Cref{thm:det_mod_p}, the determinant twist mod $J$ is identified with: 
		\[\pi_{4-t}(E_2)\ldetr/J=\pi_{4-t}\left.\left( \Sigma^{\frac{p^N|v_2|}{p-1}}E_2\right)\right/J=\pi_{4-t-\frac{p^N|v_2|}{p-1}}(E_2)/J.\]
		The claim now follows by solving for $t$ in the residue equation: 
		\[4-t-\frac{p^N|v_2|}{p-1}\equiv sp^N|v_2|-e_1|v_1| \mod p^N|v_2|. \qedhere\]
	\end{proof}
	In this way, we have recovered the patterns of the top-degree cohomology $H^4_c(\G_2,\pi_t(E_2)/p)$ in the computation by Behrens in \cite[Figure 3.2]{S_E2} when $p\ge 5$.
	\begin{cor}\label{cor:H4_G2_mod_p}
		$H^4_c(\G_2;\pi_t(E_2)/p)\neq 0$ iff either $|v_2|$ divides $t$, or $|v_1|$ divides $t$ and there is an $N\ge 1$ such that
		\begin{align*}
			-\frac{(p^N-1)  |v_2|}{p-1}\le t&\le -\frac{(p^N-1)|v_2|}{p-1}+|v_1|(p^N+p^{N-1}-2)&&\mod p^N|v_2|\\&=-2p^N-2p^{N-1}-2p+6 &&\mod p^N|v_2|.
		\end{align*}
	\end{cor}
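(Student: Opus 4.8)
The plan is to deduce the corollary from the Miller--Ravenel--Wilson calculation \Cref{thm:MRW_M11} via the duality machinery already set up. First I would specialize \Cref{prop:Hh2_reduction} to $h=2$ and feed it through Morava's change of rings \Cref{thm:CoR} and \Cref{lem:inv_ideal_E_BP}, obtaining for every $t$
\[
H^4_c(\G_2;\pi_t(E_2)/p)\;\cong\;\Bigl(\colim_{p\in J\trianglelefteq\pi_0(E_2)}H^{0,\,4-t-\frac{p^N|v_2|}{p-1}}\bigl(M^1_1/J'\bigr)\Bigr)^{\vee},
\]
where $J'\trianglelefteq BP_*$ corresponds to $J$, $N=N(J)$ is minimal with $v_2^{p^N}$ invariant modulo $J$, and $H^{0,*}(M^1_1/J')$ is the subspace of primitives of $M^1_1$ whose $v_1$-order is bounded by the $v_1$-exponent of $J'$. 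Thus $H^4_c(\G_2;\pi_t(E_2)/p)\neq 0$ exactly when some basis element of $H^{0,*}(M^1_1)$ from \Cref{thm:MRW_M11} lies in internal degree $\equiv 4-t-\frac{p^N|v_2|}{p-1}\pmod{p^N|v_2|}$ for an admissible pair $(J,N)$, and the proof reduces to a case analysis over the three MRW families.

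For \textbf{Family I} ($v_2^s/(pv_1)$, $p\nmid s$) this is \Cref{cor:fam_I}: such elements only contribute when $|v_2|\mid t$, and solving the residue equation as in the Family~I proposition above shows they contribute for every such $t$ \emph{except} those in the class $t\equiv -|v_2|\pmod{p|v_2|}$, where the constraint forces $p\mid s$. For \textbf{Family III} ($x_N^s/(pv_1^{e_1})$ with $1\le e_1\le p^N+p^{N-1}-1$), \Cref{prop:ht2_residue_equation} gives $t\equiv -\tfrac{(p^N-1)|v_2|}{p-1}+(e_1-1)|v_1|\pmod{p^N|v_2|}$; as $e_1-1$ ranges over $0,1,\dots,p^N+p^{N-1}-2$ this is precisely ``$|v_1|\mid t$ and $t$ lies in the displayed interval mod $p^N|v_2|$'' --- here $|v_1|\mid t$ is automatic since $|v_2|=(p+1)|v_1|$ makes $|v_1|$ divide both $\tfrac{(p^N-1)|v_2|}{p-1}$ and $p^N|v_2|$, the right endpoint rewrites as $-2p^N-2p^{N-1}-2p+6$ upon substituting $|v_1|=2(p-1)$ and $|v_2|=2(p^2-1)$, and every such residue is realized, e.g.\ by $s=1$. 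In particular the residue class $t\equiv -|v_2|\pmod{p|v_2|}$ left over from Family~I is exactly the $N=1$, $e_1=1$ endpoint of a Family~III interval, so there is no gap. Finally, for \textbf{Family II} ($1/(pv_1^{j})$, $j\ge 1$) the same bookkeeping, using $4-\tfrac{|v_2|}{p-1}=-|v_1|$, yields $t\equiv (j-1)|v_1|-\tfrac{(p^N-1)|v_2|}{p-1}\pmod{p^N|v_2|}$, while invariance forces $j\le p^N$; then $0\le j-1\le p^N-1\le p^N+p^{N-1}-2$, so every Family~II contribution already appears as a Family~III one ($N\ge1$) or as the $|v_2|\mid t$ case ($N=0$), and contributes nothing new. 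Combining the three families gives the asserted equivalence; as a byproduct one recovers the chart in \cite[Figure~3.2]{S_E2}.

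The part needing genuine care is the colimit-with-shifting-degrees bookkeeping in the first paragraph. For the ``if'' direction one needs the converse of \Cref{prop:ht2_residue_equation}: every $t$ in a stated interval must actually be hit by a genuine $H^0(M^1_1)$-class living in some admissible invariant quotient, and I expect the delicate sub-case to be the Family~III classes with $p^N<e_1\le p^N+p^{N-1}-1$ (which occur once $N\ge2$), where the relevant invariant ideal cannot be taken of the naive form $(p,v_1^{e_1},v_2^{p^N})$ --- by \Cref{lem:Baird} it would fail to be invariant --- but must instead be built from the appropriate $v_1$-power replacements so that it still carries a $v_2^{p^N}$-self map. Making this precise requires combining the replacement version of Baird's lemma with the $v_1$-Bockstein construction underlying \Cref{thm:MRW_M11}; once that is in hand, the contributions of the three families, and hence the corollary, follow from the cited results together with the elementary degree arithmetic above.
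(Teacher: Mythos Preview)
Your approach is the paper's: invoke \Cref{prop:Hh2_reduction}, pass through change of rings, and read off the answer from the three MRW families in \Cref{thm:MRW_M11} via the residue equation of \Cref{prop:ht2_residue_equation}. The paper's proof is a two-line version of exactly this, citing \Cref{prop:ht2_residue_equation} together with the bound $1\le e_1\le p^N+p^{N-1}-1$; your write-up is more thorough, and in particular your observation that Family~II contributions are absorbed into the Family~III range (and your flag about the ``if'' direction when $e_1>p^N$) make explicit points the paper leaves implicit.

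One small correction: your Family~I analysis introduces an unnecessary exception. The element $v_2^s/(pv_1)$ already lies in the image of $v_2^{-1}BP_*/(p,v_1)$, where $v_2$ itself is invariant, so one may take $N=0$. The residue equation then reduces modulo $|v_2|$ to $t\equiv 0$, with no constraint forcing $p\mid s$; hence Family~I contributes to every $t$ with $|v_2|\mid t$ and there is no missing class at $t\equiv -|v_2|\pmod{p|v_2|}$ to be rescued by Family~III. This does not affect your conclusion, only the bookkeeping.
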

	\begin{proof}
		In degrees divisible by $|v_2|$, we have elements corresponding to $\frac{v_2^s}{pv_1}$. When $|v_2|\nmid t$, this follows from \Cref{prop:ht2_residue_equation} and the bounds for $e_1$ in \Cref{thm:MRW_M11}: $1\le e_1 \le p^N+p^{N-1}-1$.
	\end{proof}
	We have therefore recovered the following result of Shimomura and Yabe in \cite{Shimomura-Yabe}: 
	\begin{cor}
		The RHVC holds and $H^4_c(\G_2;\pi_{2p-2}(E_2))=0$ when $h=2$ and $p\ge 5$.
	\end{cor}
	\begin{rem}
		Shimomura and Yabe proved the cohomological version of \Cref{conj:CVC} at $h=2$ and $p\ge 5$, which is equivalent to the homological version by Poincar\'e duality \Cref{cor:PD}.
	\end{rem}
	\begin{proof}
		When $|v_2|\nmid t$, the upper bounds for $t$ above are always negative, which implies when $p\ge 5$
		\begin{align*}
			H_0(\G_2;\pi_0(E_2)/p)\cong H^4_c(\G_2;\pi_0(E_2)/p)&=\F_p,\\
			H_0(\G_2;\pi_{2p-2}(E_2))\cong H^4_c(\G_2;\pi_{2p-2}(E_2))\cong H^4_c(\G_2;\pi_{2p-2}(E_2)/p)&=0.
		\end{align*}
		We have therefore verified \eqref{eqn:RHVC} and the vanishing of the top degree cohomology group $H^4_c(\G_2;\pi_{2p-2}(E_2))$.
	\end{proof}
	At height $h\ge 3$, $H^0(M^1_{h-1})$ is described as follows:
	\begin{thm}[Miller-Ravenel-Wilson, {\cite[Theorem 5.10]{MRW}}]\label{thm:MRW_M1_h-1}
		Define $a_{h,N}$ by the recursive formula: $a_{h,0}=1$, $a_{h,1}=p$, and 
		\begin{equation*}
			a_{h,N}=\left\{\begin{array}{cl}
				pa_{h,N-1},&1<N\not\equiv 1\mod(h-1);\\
				pa_{h,N-1}+p-1,&1<N\equiv 1\mod(h-1).
			\end{array}\right.
		\end{equation*}				
		Recall $M^1_{h-1}=v_h^{-1}BP_*/(p,v_{1},\cdots, v_{h-2}, v_{h-1}^\infty)$. Then $H^0(M^1_{h-1})$ is an $\F_p$-vector space generated by
		\begin{enumerate}[label=\textup{\Roman*.},itemsep=1 ex]
			\item  $\frac{v_h^s}{pv_1\cdots v_{h-1}}$, where $p\nmid s\in \Z$.
			\item  $\frac{1}{pv_1\cdots v_{h-2}v_{h-1}^{j}}$, where $j\ge 1$.
			\item $\frac{x_{h,N}^s}{pv_1\cdots v_{h-2}v_{h-1}^{e_{h-1}}}$, where $p\nmid s\in \Z$, $1\le e_{h-1}\le a_{h,N}$, and $x_{h,N}$ is  is defined inductively by 
			\begin{align*}
				x_{h,0}&=v_p,&&\\
				x_{h,1}&=v_h^p-v_{h-1}^pv_{h}^{-1}v_{h+1},&&\\
				x_{h,N}&=x_{h,N-1}^p && \text{ for }1<N\not\equiv 1\mod(h-1),\\
				x_{h,N}&= x_{h,N-1}^p-v_{h-1}^{\frac{(p^{N-1}-1)(p^h-1)}{p^{h-1}-1}}v_{h}^{p^N-p^{N-1}+1}&& \text{ for }1<N\equiv 1\mod(h-1).
			\end{align*}
		\end{enumerate}
	\end{thm}
	\begin{lem}\label{lem:a_hn_closed_formula}
		The closed formula of $a_{h,N}$ is given by:
		\begin{equation*}
			a_{h,N}=p^N+\frac{(p-1)(p^{N-1}-p^{r-1})}{p^{h-1}-1},
		\end{equation*}
		where $1\le r\le h-1$ is an integer such that $N\equiv r\mod (h-1)$. \footnote{$r$ is \emph{not} the usual residue of $N$ mod $h-1$ since $r=h-1$ when $(h-1)\mid N$.}
	\end{lem}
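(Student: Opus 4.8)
The plan is to prove the identity by induction on $N \ge 1$; note that the asserted formula only makes sense for $N\ge 1$, since at $N=0$ the convention forces $r = h-1$ and then the term $p^{r-1}$ would sit over a spurious $p^{-1}$. The base case $N=1$ is immediate: here $r=1$, so the right-hand side collapses to $p^1 + \frac{(p-1)(p^0-p^0)}{p^{h-1}-1} = p = a_{h,1}$.

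For the inductive step, fix $N\ge 2$, assume the formula for $N-1$, and let $1\le r\le h-1$ and $1\le r'\le h-1$ be the residues attached to $N$ and $N-1$ under the stated convention. First I would record the elementary combinatorial fact that $r' = r-1$ when $r\ne 1$, whereas $r'=h-1$ when $r=1$ --- the latter being precisely the case $N\equiv 1 \bmod (h-1)$ in which the recursion carries its extra $+(p-1)$ term. With this in hand, the two branches of the recursion line up with the two branches of the residue bookkeeping: when $r\ne 1$, multiplying the inductive expression for $a_{h,N-1}$ (with residue $r' = r-1$) by $p$ returns the claimed formula for $a_{h,N}$ directly; when $r=1$, one multiplies by $p$, adds $p-1$, and simplifies using the telescoping identity
\[ \frac{p^{N-1}-p^{h-1}}{p^{h-1}-1}+1 \;=\; \frac{p^{N-1}-1}{p^{h-1}-1}, \]
together with $p^{r-1}=p^0=1$. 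Each branch is a single line of algebra once the residue has been correctly identified.

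I expect the only genuinely delicate point to be the residue bookkeeping under the nonstandard convention $r=h-1$ when $(h-1)\mid N$ --- in particular the \emph{wrap} $r=1$, $r'=h-1$, which is exactly what makes the $+(p-1)$ correction in the recursion compatible with the closed form. As an alternative that avoids the case split entirely, I would set $b_N := a_{h,N}/p^N$, observe that $b_1 = 1$ and that $b_N - b_{N-1}$ equals $(p-1)p^{-N}$ when $N\equiv 1 \bmod(h-1)$ and $0$ otherwise, then sum the resulting finite geometric series over the indices $1+m(h-1)$ with $1\le m\le q$, where $N = r + q(h-1)$ with $1\le r\le h-1$, and recover the closed form after clearing denominators.
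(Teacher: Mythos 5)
Your proof is correct. The paper states this lemma without proof, so there is nothing to compare against, but the induction you give is the natural argument and it checks out: the base case $N=1$ gives $r=1$ and the correction term vanishes; the residue bookkeeping ($r'=r-1$ for $r\neq 1$, and $r'=h-1$ exactly when the recursion picks up its $+(p-1)$) is right, including the degenerate case $h=2$ where $r$ is identically $1$ and the recursion always carries the correction; and the telescoping identity $\frac{p^{N-1}-p^{h-1}}{p^{h-1}-1}+1=\frac{p^{N-1}-1}{p^{h-1}-1}$ closes the $r=1$ branch. Your observation that the formula is only meaningful for $N\ge 1$ (at $N=0$ it would produce $1/p$) is also a worthwhile clarification that the paper leaves implicit.
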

	Like \Cref{cor:H4_G2_mod_p}, we now have:
	\begin{prop}\label{prop:MRW_vanishing}
		Assume $(p-1)\nmid h$ and let $I_{h-1}=(p,u_1,\cdots,u_{h-2})\trianglelefteq \pi_0(E_h)$. Then the cohomology group $H_c^{h^2}(\G_h;\pi_t(E_h)/I_{h-1})$ is zero unless $|v_h|$ divides $t$, or there is an $N\ge 1$ such that
		\[t\equiv -\frac{(p^N-1)|v_h|}{p-1}+k\cdot|v_{h-1}| \mod p^N|v_h|\text{ for some } 0\le k\le a_{h,N}-1.\]
	\end{prop}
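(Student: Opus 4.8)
The plan is to repeat at height $h$ the argument that produced \Cref{prop:ht2_residue_equation} and \Cref{cor:H4_G2_mod_p}, feeding in the Miller--Ravenel--Wilson computation \Cref{thm:MRW_M1_h-1} in place of its height-$2$ version. First I would record the $I_{h-1}$-analogue of \Cref{prop:Hh2_reduction}. Combining the cohomological Poincar\'e duality \eqref{eqn:coh_PD} with Gross--Hopkins duality \Cref{cor:GH_dual}, and observing that the Pontryagin dual of the quotient $\pi_0(E_h)/I_{h-1}$ is the largest subobject of $\pi_0(E_h)\ldetr/\mfrak^\infty$ annihilated by $I_{h-1}$, namely $\colim_{d\ge1}\pi_0(E_h)\ldetr/J_d$ with $J_d:=(p,u_1,\dots,u_{h-2},u_{h-1}^{d})$, one gets
\[ H^{h^2}_c(\G_h;\pi_t(E_h)/I_{h-1})\;\cong\;\Big(\colim_{d\ge1}H^0_c\big(\G_h;\pi_{2h-t}(E_h)\ldetr/J_d\big)\Big)^{\!\vee}. \]
This is the same bookkeeping as for \Cref{prop:Hh2_reduction}, with the cofinal system of invariant ideals cut down to those containing $I_{h-1}$.

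Next, for each $d$, I would identify $\pi_0(E_h)\ldetr/J_d\simeq\pi_{\frac{p^{N}|v_h|}{p-1}}(E_h)/J_d$ via \Cref{thm:det_mod_p}, where $N=N(d)$ is least with $v_h^{p^N}$ invariant modulo $J_d$; by Baird's \Cref{lem:Baird}, $N(d)=\lceil\log_p d\rceil$. The Change of Rings \Cref{thm:CoR} with \Cref{lem:inv_ideal_E_BP} then turns each term into $\Ext^{0,\ast}_{BP_*BP}(BP_*,v_h^{-1}BP_*/(p,v_1,\dots,v_{h-2},v_{h-1}^{d}))$, and the colimit over $d$ assembles to $H^{0,\ast}(M^1_{h-1})$. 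Consequently $H^{h^2}_c(\G_h;\pi_t(E_h)/I_{h-1})$ can be nonzero only if some generator of $H^0(M^1_{h-1})$ from \Cref{thm:MRW_M1_h-1} lies in internal degree $\equiv 2h-t-\frac{p^N|v_h|}{p-1}\pmod{p^N|v_h|}$, with $N=N(d)$ attached to the smallest stage $d$ in which that generator already appears.

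It then remains to solve this congruence for the three families, using the Greek-letter degree convention of \Cref{prop:families} and the elementary identity $2h+\sum_{i=1}^{h-1}|v_i|=\frac{|v_h|}{p-1}$. For Family~I, $\frac{v_h^s}{pv_1\cdots v_{h-1}}$ with $p\nmid s$, the congruence collapses to $t\equiv -s|v_h|-\tfrac{(p^N-1)|v_h|}{p-1}\equiv 0\pmod{|v_h|}$, the first alternative. For Family~III, $\frac{x_{h,N}^s}{pv_1\cdots v_{h-2}v_{h-1}^{e_{h-1}}}$ with $p\nmid s$, $N\ge1$ and $1\le e_{h-1}\le a_{h,N}$, it collapses to $t\equiv -\tfrac{(p^N-1)|v_h|}{p-1}+(e_{h-1}-1)|v_{h-1}|\pmod{p^N|v_h|}$, the second alternative with $k=e_{h-1}-1\in\{0,\dots,a_{h,N}-1\}$. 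For Family~II, $\frac{1}{pv_1\cdots v_{h-2}v_{h-1}^{j}}$, the congruence gives $t\equiv -\tfrac{(p^N-1)|v_h|}{p-1}+(j-1)|v_{h-1}|\pmod{p^N|v_h|}$; but this generator is killed by $v_{h-1}^d$ only for $d>j$, so the relevant $N$ satisfies $p^N\ge d>j$, whence $j-1\le p^N-2<a_{h,N}$ by the closed form $a_{h,N}\ge p^N$ of \Cref{lem:a_hn_closed_formula}. Thus Family~II falls inside the Family~III window, and the union of the three cases is exactly the stated dichotomy.

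The main obstacle, just as at height $2$, is the degree bookkeeping in the middle step: one must correctly align the internal grading used for the generators of $H^0(M^1_{h-1})$ (the Greek-letter convention of \Cref{prop:families}, in which the $pv_1\cdots v_{h-2}$ in a denominator contributes $-\sum_{i=1}^{h-2}|v_i|$ to the degree) with the gradings on the finite stages $v_h^{-1}BP_*/(p,v_1,\dots,v_{h-2},v_{h-1}^{d})$ coming out of Change of Rings, absorb the colimit transition shifts into the determinant twist $\frac{p^{N(d)}|v_h|}{p-1}$, and take care that each generator is tested against the period $p^{N(d)}|v_h|$ rather than $|v_h|$. Only with the right accounting does the identity $2h+\sum_{i=1}^{h-1}|v_i|=\frac{|v_h|}{p-1}$ collapse the three families onto the congruence classes in the statement; nothing beyond the cited results is needed.
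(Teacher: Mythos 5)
Your proposal is correct and is exactly the argument the paper intends: it states this proposition with no separate proof ("Like \Cref{cor:H4_G2_mod_p}, we now have..."), the implicit argument being precisely your combination of the duality reduction \Cref{prop:Hh2_reduction} restricted to invariant ideals containing $I_{h-1}$, the determinant-twist identification \Cref{thm:det_mod_p}, Change of Rings, and the degree congruences for the three families of \Cref{thm:MRW_M1_h-1}, just as in \Cref{prop:ht2_residue_equation} and \Cref{cor:H4_G2_mod_p} at height $2$. Your degree bookkeeping, including the identity $2h+\sum_{i=1}^{h-1}|v_i|=\frac{|v_h|}{p-1}$ and the absorption of Family II into the Family III window, checks out.
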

	In particular, the closed formula for $a_{h,N}$ in \Cref{lem:a_hn_closed_formula} implies the upper bounds for $t$ above are always negative. Like the $h=2$ and $p\ge 5$ case in \Cref{cor:H4_G2_mod_p}, this shows that when $(p-1)\nmid h$: 
	\begin{align}
		H^{h^2}_c(\G_h;\pi_0(E_h)/I_{h-1})&=\F_p,\nonumber\\
		H^{h^2}_c(\G_h;\pi_{2p-2}(E_h)/I_{h-1})&=0.\label{eqn:Hh2_mod_I_h-1}
	\end{align}
	\begin{thm}[Main Theorem B]\label{thm:RHVC_Ih-2}
		When $(p-1)\nmid h$, the Homological Vanishing Conjecture is true modulo the ideal $I_{h-1}=(p,u_1,\cdots,u_{h-2})$.
	\end{thm}
	\subsection{Conclusions at small heights and primes}
	Recall that by \Cref{thm:DSS_pic}, there is an isomorphism when $2p-1=h^2$: 
	\begin{equation*}
		\begin{tikzcd}
			\kappa_h\rar[>->,"{\sim}","\text{(\ref{prop:ev_inj})}"']& H^{2p-1}_c(\G_h;\pi_{2p-2}(E_h))\rar["\sim","\text{(\ref{prop:Hh2_mod_p})}"']&H^{h^2}_c(\G_h;\pi_{2p-2}(E_h)/p).
		\end{tikzcd}
	\end{equation*}
	At $p=5$ and $h=3$, to use our method to compute  $H^{9}_c(\G_3;\pi_8(E_3)/5)$, we need to know $H^{0,*}(M^2_1)$ at prime $p=5$. It is also needed to verify the RHVC at height $h=3$ and $p>2$ (which implies $(p-1)\nmid h$).  This computation also appears in Yexin Qu's  thesis \cite{qu_2018}. By \Cref{prop:bounds_RHVC_kappah}, we need to check that for each $1\le e_2\le a_{3,N}$, if there is element $\frac{y_{N}}{pv_1^{e_1}v_2^{e_2}}\in H^0(M^{2}_1)$, then
	\begin{equation*}
		e_1\cdot |v_1|+e_2\cdot |v_2| < \frac{p^N|v_3|}{p-1}-2\cdot 3.
	\end{equation*}
	When $e_2=1$, we have $e_1< \frac{p^N(p^2+p+1)-3}{p-1}-(p+1)$. When $e_2$ attains its maximum $a_{3,N}$ in \Cref{thm:MRW_M1_h-1}, this translates to 
	\begin{equation*}
		e_1< \frac{p^{N-1}(p^2+p+1)-3}{p-1}+p^{r-1}, \quad r=\left\{\begin{array}{ll}
			1, &N \text{ is odd;}\\
			2, & N \text{ is even}.
		\end{array}\right.
	\end{equation*}
	We observe that both bounds are larger (looser) than the bounds $a_{3,N}$ for $v_2$-divisibility itself. However, it is not clear how to verify them without computing the Greek letter elements in $H^0(M^{2}_1)$. Nevertheless, the vanishing result in \eqref{eqn:Hh2_mod_I_h-1} does have concrete implications on exotic elements in $\Pic_{K(h)}$ when $2p-1=h^2$, provided the relevant Smith-Toda complexes exist. 
	\begin{thm}[Main Theorem A]\label{thm:main_A}
		Let $2p-1=h^2$. Suppose the type-$(h-1)$ Smith-Toda complex $V(h-2)=S^0/(p,v_1,\cdots, v_{h-2})$ exists at prime $p$. Then an exotic element $X\in \kappa_h$ cannot be detected by $V(h-2)$; that is, 
		\begin{equation*}
			X\wedge_{K(h)}V(h-2)\simeq L_{K(h)}V(h-2).
		\end{equation*}
	\end{thm}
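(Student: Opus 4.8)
The plan is to produce a $K(h)$-local equivalence $L_{K(h)}V(h-2)\simto X\otimes_{K(h)}V(h-2)$. Since an $(E_h)_*$-homology isomorphism between $K(h)$-local spectra is a $K(h)$-local equivalence, it suffices to build a map of spectra realizing the canonical isomorphism of Morava modules. Because $X\in\kappa_h$, fix a $\G_h$-equivariant isomorphism $f^X\colon(E_h)_*\simto(E_h)_*(X)$ and set $\iota_X=f^X(1)$. As $(E_h)_*(X)$ is free of rank one, the Künneth isomorphism gives a $\G_h$-equivariant identification $(E_h)_*(X\otimes_{K(h)}V(h-2))\cong(E_h)_*(X)\otimes_{(E_h)_*}(E_h)_*/I_{h-1}\cong(E_h)_*/I_{h-1}\cong(E_h)_*(V(h-2))$, under which $\iota_X$ goes to the canonical generator, which we denote $\bar\iota_X$. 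In particular the HFPSS \eqref{eqn:HFPSS_X} for $X\otimes_{K(h)}V(h-2)$ has $E_2$-page $H^s_c(\G_h;\pi_t(E_h)/I_{h-1})$, the same as that of the HFPSS for $L_{K(h)}V(h-2)$; note $2p-1=h^2$ forces $(p-1)\nmid h$.

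The first step is to show that $\bar\iota_X\in E_2^{0,0}$ is a permanent cycle. By sparseness (\Cref{lem:sparseness}, valid for these coefficients via \Cref{thm:CoR}) this spectral sequence collapses onto the page $E_{2p-1}$, and naturality of the HFPSS along $1_X\wedge(S^0\hookrightarrow V(h-2))$ identifies $d_{2p-1}(\bar\iota_X)$ with the image of $\ev_2(X)=d^X_{2p-1}(\iota_X)$ (\Cref{con:exotic_hom}) under the reduction map $H^{2p-1}_c(\G_h;\pi_{2p-2}(E_h))\to H^{2p-1}_c(\G_h;\pi_{2p-2}(E_h)/I_{h-1})$. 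Since $2p-1=h^2=\cd_p(\G_h)$ (\Cref{lem:cd}), the target is the top-degree group, and it vanishes by the Miller--Ravenel--Wilson consequence \eqref{eqn:Hh2_mod_I_h-1}; hence $d_{2p-1}(\bar\iota_X)=0$. By sparseness the next possibly nonzero differential out of $E^{0,0}$ is $d_{4p-3}$, whose target lies in cohomological degree $4p-3>h^2$ and so vanishes, and the same holds for all higher differentials. Thus $\bar\iota_X$ survives to $E_\infty$ and is realized by a map $z\colon S^0_{K(h)}\to X\otimes_{K(h)}V(h-2)$ inducing the canonical surjection $\pi_0(E_h)\twoheadrightarrow\pi_0(E_h)/I_{h-1}$ on $(E_h)_0$.

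The second step is to promote $z$ to a map $\bar z\colon L_{K(h)}V(h-2)\to X\otimes_{K(h)}V(h-2)$ restricting to $z$ on the bottom cell. We would extend $z$ along the standard cofiber tower $S^0_{K(h)}=Y_0\to Y_1\to\cdots\to Y_{h-1}=L_{K(h)}V(h-2)$, in which $Y_{k+1}=\operatorname{cofib}(v_k\colon\Sigma^{|v_k|}Y_k\to Y_k)$ is a $v_k$-self-map; the obstruction to extending across the $(k{+}1)$-st stage lies in a homotopy group of $X\wedge V(h-2)\wedge DY_k$. Comparing throughout with the case $X=S^0_{K(h)}$ — where every stage extends and the resulting map is $\operatorname{id}_{V(h-2)}$ — each obstruction is controlled by the reduction mod $I_{h-1}$ of one of $\ev_2(X),\ev_3(X),\dots$ and lands in a group $H^s_c(\G_h;\pi_*(E_h)/J)$ with $J\supseteq I_{h-1}$ and $s\ge h^2$; such groups vanish by \eqref{eqn:Hh2_mod_I_h-1} when $s=h^2$ and by $\cd_p(\G_h)=h^2$ when $s>h^2$. (When $V(h-2)$ is a homotopy ring spectrum, e.g.\ $V(1)$ at $p=5$, one may instead extend $z$ to a $V(h-2)$-module map, avoiding this analysis.) The resulting $\bar z$ sends the canonical generator of $(E_h)_*/I_{h-1}$ to $\bar\iota_X$; since both source and target are free of rank one over $(E_h)_*/I_{h-1}$ on these generators, $\bar z$ is an $(E_h)_*$-isomorphism, hence the desired $K(h)$-local equivalence.

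The main obstacle is the second step: verifying that every obstruction to extending the bottom-cell map over all of $V(h-2)$ lands in one of the cohomology groups that \eqref{eqn:Hh2_mod_I_h-1} and the bound $\cd_p(\G_h)=h^2$ force to vanish — equivalently, that $V(h-2)$ is rigid among $K(h)$-local spectra with the same Morava module. The first step is otherwise a direct adaptation of the proof of \Cref{prop:ev_inj}, with the Morava change-of-rings \Cref{thm:CoR} used to import the Miller--Ravenel--Wilson computation \Cref{thm:MRW_M1_h-1} (in the form \eqref{eqn:Hh2_mod_I_h-1}).
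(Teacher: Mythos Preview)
Your first step is correct and is essentially an unpacking of part of the paper's argument: the HFPSS for $X\otimes_{K(h)}V(h-2)$ has the same $E_2$-page as that for $L_{K(h)}V(h-2)$, the only possibly nonzero differential out of $E_2^{0,0}$ is $d_{2p-1}$, and by naturality its value is the reduction mod $I_{h-1}$ of $\ev_2(X)$, which lands in the group $H^{h^2}_c(\G_h;\pi_{2p-2}(E_h)/I_{h-1})=0$ of \eqref{eqn:Hh2_mod_I_h-1}.

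The gap is exactly where you flag it. Your second step claims that each obstruction to extending $z$ over the cofiber tower $Y_0\to Y_1\to\cdots\to Y_{h-1}$ ``lands in a group $H^s_c(\G_h;\pi_*(E_h)/J)$ with $J\supseteq I_{h-1}$ and $s\ge h^2$''. This is not established. The obstruction at stage $k$ is the class of $z\circ v_k$ in a \emph{homotopy} group $\pi_{|v_k|}\bigl(DY_k\wedge X\wedge V(h-2)\bigr)$, which is computed by the entire HFPSS and is not a priori concentrated in filtration $\ge h^2$; nor is it immediate that the difference between this obstruction and the corresponding one for $X=S^0_{K(h)}$ is governed by the higher invariants $\ev_3(X),\ev_4(X),\ldots$ in the way you assert. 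Your parenthetical escape hatch---using a ring structure on $V(h-2)$ to promote $z$ to a module map---is legitimate and does cover the two explicit cases in \Cref{cor:fin_complex}, but it does not give the general statement.

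The paper avoids this obstruction analysis entirely. Its proof is a two-line appeal to the \emph{topology} on $\Pic_{K(h)}$ described in \cite[Remark~1.13]{Goerss-Hopkins}: that reference shows directly that if the image of $X\in\kappa_h$ under
\[
\kappa_h\xrightarrow{\ev_2}H^{h^2}_c(\G_h;\pi_{2p-2}(E_h))\twoheadrightarrow H^{h^2}_c(\G_h;\pi_{2p-2}(E_h)/I_{h-1})
\]
vanishes, then $X\otimes_{K(h)}V(h-2)\simeq L_{K(h)}V(h-2)$. In other words, Goerss--Hopkins package precisely the rigidity statement you are trying to prove by hand in step~2; the paper then simply observes that the target group is zero by \eqref{eqn:Hh2_mod_I_h-1}. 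If you want a self-contained argument, you would need to either reproduce the relevant portion of \cite{Goerss-Hopkins} or restrict to the ring-spectrum case.
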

	\begin{proof}
		Using the topology of $\Pic_{K(h)}$ described in \cite[Proposition 14.3.(d)]{Hovey-Strickland_1999}, we know that if the image of $X\in \kappa_h$ under the composite
		\begin{equation*}
			\kappa_3\xrightarrow{\ev_2}H^{h^2}_c(\G_h;\pi_{2p-2}(E_h))\twoheadrightarrow H^{h^2}_c(\G_3;\pi_{2p-2}(E_h)/I_{h-1})
		\end{equation*}
		is zero, then $X\wedge_{K(h)}V(h-2)=L_{K(h)}V(h-2)$, provided $V(h-2)=S^0/(p,v_1,\cdots, v_{h-2})$ exists. Since the target of this map is zero by \eqref{eqn:Hh2_mod_I_h-1}, the equivalence above is true for any $X\in \kappa_h$ when $2p-1=h^2$.
	\end{proof}
	\begin{cor}\label{cor:fin_complex}~
		\begin{enumerate}
			\item At height $3$ and prime $5$, an exotic element $X$ in $\Pic_{K(3)}$ cannot be detected by $V(1)=S^0/(5,v_1)$.
			\item  At height $5$ and prime $13$, an exotic element $X$ in $\Pic_{K(5)}$ cannot be detected by $V(3)=S^0/(13,v_1,v_2,v_3)$.
		\end{enumerate} 
	\end{cor}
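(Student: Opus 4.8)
The plan is to obtain \Cref{cor:fin_complex} as a direct specialization of \Cref{thm:main_A}, so that all the real content already lives in that theorem (and, through it, in the Miller-Ravenel-Wilson vanishing \eqref{eqn:Hh2_mod_I_h-1}). The only thing left to do is to check that the hypotheses of \Cref{thm:main_A} are met in the two cases at hand.

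First I would record the numerical check. For (1), with $h=3$ and $p=5$, one has $2p-1=9=3^2=h^2$; for (2), with $h=5$ and $p=13$, one has $2p-1=25=5^2=h^2$. So in both cases the standing hypothesis $2p-1=h^2$ of \Cref{thm:main_A} holds, and (as noted in the introduction) it automatically forces $(p-1)\nmid h$, which is the condition actually used in the duality and change-of-rings arguments feeding into \Cref{thm:main_A} and into \eqref{eqn:Hh2_mod_I_h-1}.

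Next I would invoke the existence of the relevant Smith-Toda complexes. For (1) the complex $V(h-2)=V(1)=S^0/(5,v_1)$ exists because the mod-$p$ Moore spectrum carries a $v_1$-self-map at every odd prime, so in particular at $p=5$. For (2) the complex $V(h-2)=V(3)=S^0/(13,v_1,v_2,v_3)$ exists because, by the classical constructions of $v_i$-self-maps, $V(n)$ exists for all primes that are large enough relative to $n$, and $p=13$ is comfortably large enough for $n=3$. With the hypotheses of \Cref{thm:main_A} verified, that theorem immediately gives $X\otimes_{K(3)}V(1)\simeq L_{K(3)}V(1)$ for every $X\in\kappa_3$, and $X\otimes_{K(5)}V(3)\simeq L_{K(5)}V(3)$ for every $X\in\kappa_5$; these are precisely (1) and (2).

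There is no genuine obstacle of its own in this corollary. The one caveat worth flagging in the write-up is that both conclusions are conditional on the \emph{existence} of $V(h-2)$, so the proof should point out explicitly that $V(1)$ and $V(3)$ are known to exist at $p=5$ and $p=13$ respectively, and should remark---as the paper already does after \Cref{thm:main_A}---that Nave's nonexistence theorem for $V(h)$ when $2h=p+1$ (which does rule out $V(3)$ at $p=5$, since $2\cdot 3=5+1$) is harmless here, because the relevant complex is $V(h-2)$ and not $V(h)$.
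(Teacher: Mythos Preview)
Your proposal is correct and matches the paper's own proof exactly: the paper's argument is simply to note that $V(1)$ exists at $p=5$ and $V(3)$ exists at $p=13$ (citing the classical constructions of Adams--Toda and Smith--Toda, \cite[Example 2.4.1]{orange}), and then to invoke \Cref{thm:main_A}. Your additional remarks on the numerical check $2p-1=h^2$ and on why Nave's nonexistence result is irrelevant here are accurate elaborations of points the paper makes elsewhere.
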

	\begin{proof}
		The Smith-Toda complexes $V(1)$ and $V(3)$ have been constructed for $p\ge 3$ and $p\ge 7$ by Adams-Toda and Smith-Toda, respectively \cite[Example 2.4.1]{orange}.
	\end{proof}
	\begin{rem}\label{rem:ST_cpx}
		A referee has pointed out to us that it is an open question whether $V(4)$ exists any \emph{any} prime (see discussions at the end of \cite[\S5.6]{green}). Recall that Smith-Toda complexes $V(n)$ are constructed as cofibers of $v_n$-self maps of $V(n-1)$ that induce multiplication by $v_n$ on $BP$-homology groups. This means that we do not know the existence of $V(n)$ for $n\ge 4$ at any prime $p$. As a result, it is unclear whether we have a similar statement at the next pair of height and prime $(h,p)=(9,41)$ satisfying $2p-1=h^2$, which would require the existence of $V(7)$ at the prime $p=41$. 
		
		In \cite{Nave_Smith-Toda}, Nave proved the non-existence of the Smith-Toda complex $V(h)$ when $2h=p+1$. This does not overlap with our consideration of the potential Smith-Toda complexes $V(h-2)$ when $h^2=2p-1$.
	\end{rem}
	\begin{rem}\label{rem:fin_cpx}
		By \cite[Corollary 7.11]{Hovey-Strickland_1999}, a $K(h)$-local spectrum $X$ is equivalent to $L_{K(h)}S^0$ iff $X\wedge_{K(h)}V\simeq L_{K(h)} V$ for all finite complexes of type $h$. This means if $X\wedge_{K(h)} V\simeq  L_{K(h)} V$ for all $X\in \kappa_h$ and finite complexes $V$ of type $n$, then $\kappa_h=0$. \Cref{thm:main_A} can be thought of as a first step towards showing $\kappa_h=0$ when $2p-1=h^2$, since it implies $X\wedge_{K(h)} V\simeq  L_{K(h)}  V$ for any cofibers $V$ of $v_h$-self maps of $V(h-2)$. Our choices of finite complexes are restricted to cofibers of the Smith-Toda complexes $V(h-2)$, because we do not have better Greek letter element computation results beyond \Cref{thm:MRW_M1_h-1} in \cite{MRW} when $h\ge 3$. 
	\end{rem}
	We can also use the same technique to study the subgroup $\kappa^{(1)}_{h}$ of $\kappa_h$ when $4p-3=h^2$.  Recall from \eqref{eqn:tower}, $\kappa_h^{(1)}$ is the kernel of detection map
	\begin{equation*}
		\ev_2\colon \kappa_h\longrightarrow H^{2p-1}_c(\G_h;\pi_{2p-2}(E_h)).
	\end{equation*}
	In terms of the homotopy fixed point spectral sequence, it consists of exotic $K(h)$-local spheres $X$, such that $E_2^{0,0}(X)\cong \Zp$ does not support a $d_{2p-1}$-differential. Using similar argument as in \Cref{prop:ev_inj}, one can show that the detection map:
	\begin{equation*}
		\ev_3\colon \kappa_h^{(1)}\longrightarrow E_{2p}^{4p-3,4p-4}
	\end{equation*}
	injective because the target of the next detection map is above the horizontal vanishing line at $s=h^2=4p-3$ of the $E_2$-page. The target of this detection map is a subquotient of 
	\begin{equation*}
		E_{2}^{4p-3,4p-4}=H^{4p-3}_c(\G_h;\pi_{4p-4}(E_h))=H^{h^2}_c(\G_h;\pi_{4p-4}(E_h)) . 
	\end{equation*}
	By \Cref{prop:MRW_vanishing}, we know $H^{h^2}_c(\G_h;\pi_{4p-4}(E_h)/I_{h-1})=0$ when $(p-1)\nmid h$. This implies: 
	\begin{thm}\label{thm:kappa_h_4p-3}
		Let $X$ be an exotic element in $\Pic_{K(h)}$ where $h$ and $p$ satisfies $4p-3=h^2$.  Suppose the Smith-Toda complex $V(h-2)$ exists. If $X\in \ker\ev_2$, i.e. the $E_{2}^{0,0}(X)$-term in the HFPSS \eqref{eqn:HFPSS_X} does not support a $d_{2p-1}$-differential, then $X\wedge_{K(h)} V(h-2)\simeq L_{K(h)}V(h-2)$. In particular, this is true when $(h,p)=(3,3)$ and $(h,p)=(5,7)$. 
	\end{thm}
	We end this paper with a discussion on the relation between the RHVC and exotic Picard groups. 
	\begin{thm}[Main Theorem C]\label{thm:main_C}
		At height $3$, the RHVC implies $\kappa_3=0$ when $p=5$ and $\kappa_3^{(1)}=0$ when $p=3$.
	\end{thm}
	\begin{proof}
		We will prove the contra-positive statement at $p=5$ first. Suppose $\kappa_3\ne 0$ at $p=5$. By \Cref{prop:ev_inj} and \Cref{prop:Hh2_mod_p}, we know $H^9_{c}(\G_3;\pi_8(E_3)/5)\ne 0$. Let $x$ be a nonzero element in this group. Under the isomorphism in \Cref{prop:Hh2_reduction}, $x$ corresponds to a family of non-zero elements \eqref{eqn:E_2p-2_duality_reduction}
		\[\xi_J\in H^0_c\left(\G_3;\left.\pi_{2\cdot 3-(2\cdot 5-2)-\frac{5^N(2\cdot 5^3-2)}{5-1}}(E_{3})\right/J\right)\]
		for cofinal system of open invariant ideals $J$ in $\pi_0(E_3)$ that contains $5$. By \Cref{prop:MRW_vanishing}:
		\[H^0_c\left(\G_3;\left.\pi_{2\cdot 3-(2\cdot 5-2)-\frac{5^N(2\cdot 5^3-2)}{5-1}}(E_{3})\right/(5,v_1,v_2^\infty)\right)=0,\]
		which implies the element $\xi_J$ cannot be $v_1$-torsion. By \Cref{prop:fam_I} and \Cref{cor:fam_II}, the $\xi_J$'s are necessarily Family III Greek letter elements in \Cref{prop:families}. As result, we obtain a compatible family of non-zero Family-III elements 
		\[\xi'_J=v_1\alpha_J\in H^0_c\left(\G_3;\left.\pi_{2\cdot 3-\frac{5^N(2\cdot 5^3-2)}{5-1}}(E_{3})\right/J\right).\]
		Again by \Cref{prop:Hh2_reduction}, $\xi'_J$ corresponds a non-zero element  $x'\in H^9_{c}(\G_3;\pi_0(E_3)/5)$. Recall from \Cref{prop:fam_I}, this group already has a copy of $\F_5$ coming from Family I elements through Gross-Hopkins duality. The new addition of $x'$ in this group from Family III elements shows that its dimension is at least $2$, which contradicts the RHVC. 
		
		At $p=3$, we know $\kappa_3^{(1)}$ injects into the $E_{2p}^{4p-3,4p-4}$-term in the HFPSS for the $K(3)$-local sphere. If $\kappa_3^{(1)}\ne0$, then neither is $E_{2p}^{4p-3,4p-4}=E_6^{9,8}$. This implies $E_2^{9,8}=H^9_c(\G_3;\pi_8(E_3))\ne 0$, since $E_6^{9,8}\ne 0$ is its subquotient. The rest of the argument is entirely the same as the $p=5$ case. 
		
		In this way, we conclude $\kappa_3\ne 0$ at $p=5$  and $\kappa^{(1)}_3=0$ at $p=3$ implies the RHVC is false at the respective primes. These are the contra-positive statements of the theorem. 
	\end{proof}
	\begin{rem}
		This proof relies on \Cref{prop:MRW_vanishing}, a consequence of the Miller-Ravenel-Wilson computation \Cref{thm:MRW_M1_h-1}. In general, the implication would hold at height $h$ if we knew 
		\begin{equation}\label{eqn: RHVC to kappah trivial in general}
			H^{0,2h-(2p-2)-\frac{p^N|v_h|}{p-1}}(M^{h-2}_2)=0
		\end{equation} 
		for all $N$. Miller-Ravenel-Wilson have calculated $H^{0,*}(M_{h-1}^1)$ for all $h$. To prove \eqref{eqn: RHVC to kappah trivial in general} one would have to calculate $h-3$ many Bockstein spectral sequences, which seems dizzyingly beyond our reach with current technology.  
	\end{rem}

	\bibliographystyle{plain}
	\bibliography{exoticPicard.bib}

\begin{thebibliography}{10}

\bibitem{Chromatic_structure}
Tobias Barthel and Agn{\`e}s Beaudry.
\newblock Chromatic structures in stable homotopy theory.
\newblock In {\em Handbook of Homotopy Theory}, Chapman \& Hall/CRC handbooks
  in mathematics series. [CRC Press], Boca Raton, Florida, 2019.

\bibitem{Beaudry_k2_moore}
Agn\`es Beaudry.
\newblock Towards the homotopy of the {$K(2)$}-local {M}oore spectrum at
  {$p=2$}.
\newblock {\em Adv. Math.}, 306:722--788, 2017.

\bibitem{BBGHPS_exotic_h2_p2}
Agn\`es Beaudry, Irina Bobkova, Paul~G. Goerss, Hans-Werner Henn, Viet-Cuong
  Pham, and Vesna Stojanoska.
\newblock The exotic {$K(2)$}-local {P}icard group at the prime {$2$}, 2022.

\bibitem{Beaudry_orbits}
Agn\`es Beaudry, Naiche Downey, Connor McCranie, Luke Meszar, Andy Riddle, and
  Peter Rock.
\newblock Computations of orbits for the {L}ubin-{T}ate ring.
\newblock {\em J. Homotopy Relat. Struct.}, 14(3):691--718, 2019.

\bibitem{Beaudry-Goerss-Henn}
Agn\`es Beaudry, Paul~G. Goerss, and Hans-Werner Henn.
\newblock Chromatic splitting for the {$K(2)$}--local sphere at $p = 2$.
\newblock {\em Geom. Topol.}, 26(1):377--476, 2022.

\bibitem{BGHS_dualizing_spheres}
Agn\`es Beaudry, Paul~G. Goerss, Michael~J. Hopkins, and Vesna Stojanoska.
\newblock Dualizing spheres for compact {$p$}-adic analytic groups and duality
  in chromatic homotopy.
\newblock {\em Invent. Math.}, 229(3):1301--1434, 2022.

\bibitem{S_E2}
Mark Behrens.
\newblock The homotopy groups of {$S_{E(2)}$} at {$p\geq 5$} revisited.
\newblock {\em Adv. Math.}, 230(2):458--492, 2012.

\bibitem{Boardman_ccss}
J.~Michael Boardman.
\newblock Conditionally convergent spectral sequences.
\newblock In {\em Homotopy invariant algebraic structures ({B}altimore, {MD},
  1998)}, volume 239 of {\em Contemp. Math.}, pages 49--84. Amer. Math. Soc.,
  Providence, RI, 1999.

\bibitem{Bobkova-Goerss}
Irina Bobkova and Paul~G. Goerss.
\newblock Topological resolutions in {$K(2)$}-local homotopy theory at the
  prime 2.
\newblock {\em J. Topol.}, 11(4):918--957, 2018.

\bibitem{Devinatz_CoR}
Ethan~S. Devinatz.
\newblock Morava's change of rings theorem.
\newblock In {\em The \v{C}ech centennial ({B}oston, {MA}, 1993)}, volume 181
  of {\em Contemp. Math.}, pages 83--118. Amer. Math. Soc., Providence, RI,
  1995.

\bibitem{GHMR_Picard}
Paul Goerss, Hans-Werner Henn, Mark Mahowald, and Charles Rezk.
\newblock On {H}opkins' {P}icard groups for the prime 3 and chromatic level 2.
\newblock {\em J. Topol.}, 8(1):267--294, 2015.

\bibitem{Goerss-Hopkins}
Paul~G. Goerss and Michael~J. Hopkins.
\newblock Comparing dualities in the {$K(n)$}-local category.
\newblock In {\em Equivariant topology and derived algebra}, volume 474 of {\em
  London Math. Soc. Lecture Note Ser.}, pages 1--38. Cambridge Univ. Press,
  Cambridge, 2022.

\bibitem{Heard_2021Sp_kn-local}
Drew Heard.
\newblock The $\mathop{Sp}_{k,n}$-local stable homotopy category, 2021.
\newblock To appear in \emph{Algebr. Geom. Topol.}

\bibitem{HKM_K2_Moore}
Hans-Werner Henn, Nasko Karamanov, and Mark Mahowald.
\newblock The homotopy of the {$K(2)$}-local {M}oore spectrum at the prime 3
  revisited.
\newblock {\em Math. Z.}, 275(3-4):953--1004, 2013.

\bibitem{GrossHopkins}
M.~J. Hopkins and B.~H. Gross.
\newblock Equivariant vector bundles on the {L}ubin-{T}ate moduli space.
\newblock In {\em Topology and representation theory ({E}vanston, {IL}, 1992)},
  volume 158 of {\em Contemp. Math.}, pages 23--88. Amer. Math. Soc.,
  Providence, RI, 1994.

\bibitem{Hopkins_1994}
M.~J. Hopkins and B.~H. Gross.
\newblock The rigid analytic period mapping, {L}ubin-{T}ate space, and stable
  homotopy theory.
\newblock {\em Bulletin of the American Mathematical Society}, 30(1):76--87,
  Jan 1994.

\bibitem{HMS_picard}
Michael~J. Hopkins, Mark Mahowald, and Hal Sadofsky.
\newblock Constructions of elements in {P}icard groups.
\newblock In Eric~M. Friedlander and Mark~E. Mahowald, editors, {\em Topology
  and representation theory ({E}vanston, {IL}, 1992)}, volume 158 of {\em
  Contemp. Math.}, pages 89--126. Amer. Math. Soc., Providence, RI, 1994.

\bibitem{Hopkins_AWS2019}
Mike Hopkins.
\newblock Lectures on {L}ubin-{T}ate spaces, 2019.
\newblock Lecture notes from the 2019 Arizona Winter School.

\bibitem{hovey2004}
Mark Hovey.
\newblock Operations and co-operations in {M}orava {$E$}-theory.
\newblock {\em Homology Homotopy Appl.}, 6(1):201--236, 2004.

\bibitem{Hovey-Strickland_1999}
Mark Hovey and Neil~P. Strickland.
\newblock Morava {$K$}-theories and localisation.
\newblock {\em Mem. Amer. Math. Soc.}, 139(666):viii+100, 1999.

\bibitem{Iwaniec1978}
Henryk Iwaniec.
\newblock Almost-primes represented by quadratic polynomials.
\newblock {\em Invent. Math.}, 47(2):171--188, 1978.

\bibitem{Karamanov_Picard}
Nasko Karamanov.
\newblock On {H}opkins' {P}icard group {${\rm Pic}_2$} at the prime 3.
\newblock {\em Algebr. Geom. Topol.}, 10(1):275--292, 2010.

\bibitem{Kohlhaase_IwasawaLT}
Jan Kohlhaase.
\newblock On the {I}wasawa theory of the {L}ubin-{T}ate moduli space.
\newblock {\em Compos. Math.}, 149(5):793--839, 2013.

\bibitem{Lader_thesis}
Olivier Lader.
\newblock {\em {Une r{\'e}solution projective pour le second groupe de Morava
  pour $p \ge 5$ et applications}}.
\newblock PhD thesis, {Universit{\'e} de Strasbourg}, October 2013.

\bibitem{MS_Picard}
Akhil Mathew and Vesna Stojanoska.
\newblock The {P}icard group of topological modular forms via descent theory.
\newblock {\em Geom. Topol.}, 20(6):3133--3217, 2016.

\bibitem{MRW}
Haynes~R. Miller, Douglas~C. Ravenel, and W.~Stephen Wilson.
\newblock Periodic phenomena in the {A}dams-{N}ovikov spectral sequence.
\newblock {\em Ann. of Math. (2)}, 106(3):469--516, 1977.

\bibitem{Nave_Smith-Toda}
Lee~S. Nave.
\newblock The {S}mith-{T}oda complex {$V((p+1)/2)$} does not exist.
\newblock {\em Ann. of Math. (2)}, 171(1):491--509, 2010.

\bibitem{Piotr_2018}
Piotr Pstr\k{a}gowski.
\newblock Chromatic {P}icard groups at large primes.
\newblock {\em Proc. Amer. Math. Soc.}, 150(11):4981--4988, 2022.

\bibitem{qu_2018}
Yexin Qu.
\newblock {\em Towards the 3 line in {A}dams-{N}ovikov spectral sequence}.
\newblock PhD thesis, University of Rochester, 2018.

\bibitem{orange}
Douglas~C. Ravenel.
\newblock {\em Nilpotence and periodicity in stable homotopy theory}, volume
  128 of {\em Annals of Mathematics Studies}.
\newblock Princeton University Press, Princeton, NJ, 1992.
\newblock Appendix C by Jeff Smith.

\bibitem{green}
Douglas~C. Ravenel.
\newblock {\em Complex cobordism and stable homotopy groups of spheres}.
\newblock Providence, RI: AMS Chelsea Publishing, 2nd ed. edition, 2004.
\newblock Latest version available at
  \url{https://people.math.rochester.edu/faculty/doug/mybooks/ravenel.pdf}.

\bibitem{Shimomura-Yabe}
Katsumi Shimomura and Atsuko Yabe.
\newblock The homotopy groups {$\pi_*(L_2S^0)$}.
\newblock {\em Topology}, 34(2):261--289, 1995.

\bibitem{Strickland_interpolation}
N.~P. Strickland.
\newblock On the {$p$}-adic interpolation of stable homotopy groups.
\newblock In {\em Adams {M}emorial {S}ymposium on {A}lgebraic {T}opology, 2
  ({M}anchester, 1990)}, volume 176 of {\em London Math. Soc. Lecture Note
  Ser.}, pages 45--54. Cambridge Univ. Press, Cambridge, 1992.

\bibitem{STRICKLAND20001021}
N.P. Strickland.
\newblock Gross--{H}opkins duality.
\newblock {\em Topology}, 39(5):1021 -- 1033, 2000.

\bibitem{SW}
Peter Symonds and Thomas Weigel.
\newblock Cohomology of {$p$}-adic analytic groups.
\newblock In {\em New horizons in pro-{$p$} groups}, volume 184 of {\em Progr.
  Math.}, pages 349--410. Birkh\"{a}user Boston, Boston, MA, 2000.

\bibitem{sagemath}
{The Sage Developers}.
\newblock {\em {S}ageMath, the {S}age {M}athematics {S}oftware {S}ystem
  ({V}ersion 9.3)}, 2021.
\newblock {\tt https://www.sagemath.org}.

\end{thebibliography}
\end{document}